\patchcommand\@starttoc{\begin{quote}}{\end{quote}}
\newcommand{\cgal}{CGALP~}
\setlist[description]{style=multiline,topsep=4pt,align=parright}
\let\reftagform@=\tagform@
\def\tagform@#1{\maketag@@@{(\ignorespaces\textcolor{black}{#1}\unskip\@@italiccorr)}}
\newcommand{\iref}[1]{\textup{\reftagform@{\tcr{\ref{#1}}}}}
\begin{document}
\title{Generalized Conditional Gradient with Augmented Lagrangian for Composite Minimization}
\author{Antonio Silveti-Falls\thanks{Normandie Universit\'e, ENSICAEN, UNICAEN, CNRS, GREYC, France. E-mail: tonys.falls@gmail.com, cecio.molinari@gmail.com, Jalal.Fadili@ensicaen.fr.} \and
Cesare Molinari\samethanks \and 
Jalal Fadili\samethanks
}
\date{}
\maketitle
\begin{flushleft}\end{flushleft}
\begin{abstract}
In this paper we propose a splitting scheme which hybridizes generalized conditional gradient with a proximal step which we call \cgal algorithm, for minimizing the sum of three proper convex and lower-semicontinuous functions in real Hilbert spaces. The minimization is subject to an affine constraint, that allows in particular to deal with composite problems (sum of more than three functions) in a separate way by the usual product space technique. While classical conditional gradient methods require Lipschitz-continuity of the gradient of the differentiable part of the objective, \cgal needs only differentiability (on an appropriate subset), hence circumventing the intricate question of Lipschitz continuity of gradients. For the two remaining functions in the objective, we do not require any additional regularity assumption. The second function, possibly nonsmooth, is assumed simple, i.e., the associated proximal mapping is easily computable. For the third function, again nonsmooth, we just assume that its domain is weakly compact and that a linearly perturbed minimization oracle is accessible. In particular, this last function can be chosen to be the indicator of a nonempty bounded closed convex set, in order to deal with additional constraints. Finally, the affine constraint is addressed by the augmented Lagrangian approach. Our analysis is carried out for a wide choice of algorithm parameters satisfying so called "open loop" rules. As main results, under mild conditions, we show asymptotic feasibility with respect to the affine constraint, boundedness of the dual multipliers, and convergence of the Lagrangian values to the saddle-point optimal value. We also provide (subsequential) rates of convergence for both the feasibility gap and the Lagrangian values.
\end{abstract}

\begin{keywords}
Conditional gradient; Augmented Lagrangian; Composite minimization; Proximal mapping; Moreau envelope.
\end{keywords}

\begin{AMS}
49J52, 65K05, 65K10.
\end{AMS}


\section{Introduction}\label{sec:intro}

\begin{subsection}{Problem Statement}

In this work, we consider the composite optimization problem,
\begin{equation}\label{PProb}\tag{$\mathrsfs{P}$}
\min\limits_{x\in\HH_p} \brac{f(x) + g(Tx) + h(x): \ Ax=b},
\end{equation}
where $\mc{H}_p, \mc{H}_d, \mc{H}_v$ are real Hilbert spaces (the subindices $p, d$ and $v$ denoting the \textquotedblleft primal\textquotedblright, the \textquotedblleft dual\textquotedblright \  and an auxiliary space - respectively), endowed with the associated scalar products and norms (to be understood from the context), $A:\HH_p\to\HH_d$ and $T:\HH_p\to\HH_v$ are bounded linear operators, $b\in\HH_d$ and $f$, $g$, $h$ are proper, convex, and lower semi-continuous functions with $\C \eqdef \dom\para{h}$ being a weakly compact subset of $\HH_p$. We allow for some \emph{asymmetry} in regularity between the functions involved in the objective. While $g$ is assumed to be prox-friendly, for $h$ we assume that it is easy to compute a linearly-perturbed oracle (see \eqref{oracle}). On the other hand, $f$ is assumed to be differentiable and satisfies a condition that generalizes Lipschitz-continuity of the gradient (see Definition \ref{def_smooth}).

Problem \eqref{PProb} can be seen as a generalization of the classical Frank-Wolfe problem in \cite{frankwolfe} of minimizing a Lipschitz-smooth function $f$ on a convex closed bounded subset $\C \subset \HH_p$,
\nnewq{\label{cfwProb}
\min\limits_{x\in \HH_p}\brac{f(x): \ x\in \C}
}
In fact, if $A\equiv 0$, $b\equiv 0$, $g\equiv 0$, and $h \equiv \iota_{\C}$ is the indicator function of $\C$ then we recover exactly \eqref{cfwProb} from \eqref{PProb}.
\end{subsection}

\begin{subsection}{Contribution}
We develop and analyze a novel algorithm to solve \eqref{PProb} which combines penalization for the nonsmooth function $g$ with the augmented Lagrangian method for the affine constraint $Ax=b$. In turn, this achieves full splitting of all the parts in the composite problem~\eqref{PProb} by using the proximal mapping of $g$ (assumed prox-friendly) and a linear oracle for $h$ of the form~\eqref{oracle}. Our analysis shows that the sequence of iterates is asymptotically feasible for the affine constraint, that the sequence of dual variables converges weakly to a solution of the dual problem, that the associated Lagrangian converges to optimality, and establishes convergence rates for a family of sequences of step sizes and sequences of smoothing/penalization parameters which satisfy so-called "open loop" rules in the sense of \cite{polak} and \cite{dunn}. This means that the allowable sequences of parameters do not depend on the iterates, in contrast to a "closed loop" rule, e.g. line search or other adaptive step sizes. Our analysis also shows, in the case where \eqref{PProb} admits a unique minimizer, weak convergence of the whole sequence of primal iterates to the solution. 

The structure of \eqref{PProb} generalizes \eqref{cfwProb} in several ways. First, we allow for a possibly nonsmooth term $g$. Second, we consider $h$ beyond the case of an indicator function where the linear oracle of the form
\begin{equation}\label{oracle}
\min\limits_{s\in\HH}h\para{s}+\ip{x,s}{}
\end{equation}
can be easily solved. Observe that~\eqref{oracle} has a solution over $\dom(h)$ since the latter is weakly compact. This oracle is reminiscent of that in the generalized conditional gradient method~\cite{Bredies08,Bredies2009,Beck15,Bach15}. Third, the regularity assumptions on $f$ are also greatly weakened to go far beyond the standard Lipschitz gradient case. Finally, handling an affine constraint in our problem means that our framework can be applied to the splitting of a wide range of composite optimization problems, through a product space technique, including those involving finitely many functions $h_i$ and $g_i$, and, in particular, intersection of finitely many nonempty bounded closed convex sets; see~\secref{sec:app}.
\end{subsection}

\begin{subsection}{Relation to prior work}

In the 1950's Frank and Wolfe developed the so-called Frank-Wolfe algorithm in \cite{frankwolfe}, also commonly referred to as the conditional gradient algorithm \cite{levitin,Demyanov,dunn}, for solving problems of the form \eqref{cfwProb}. The main idea is to replace the objective function $f$ with a linear model at each iteration and solve the resulting linear optimization problem; the solution to the linear model is used as a step direction and the next iterate is computed as a convex combination of the current iterate and the step direction. We generalize this setting to include composite optimization problems involving both smooth and nonsmooth terms, intersection of multiple constraint sets, and also affine constraints.

Frank-Wolfe algorithms have received a lot of attention in the modern era due to their effectiveness in fields with high-dimensional problems like machine learning and signal processing (without being exhaustive, see, e.g., \cite{jaggi2013,beck2004,lacoste2015,harchaoui,zhang2012,narasimhan,catala}). In the past, composite, constrained problems like \eqref{PProb} have been approached using proximal splitting methods, e.g. generalized forward-backward as developed in \cite{gfb2011} or forward-douglas-rachford \cite{cesarefdr}. Such approaches require one to compute the proximal mapping associated to the function $h$. Alternatively, when the objective function satisfies some regularity conditions and when the constraint set is well behaved, one can forgo computing a proximal mapping, instead computing a linear minimization oracle. The computation of the proximal step can be prohibitively expensive; for example, when $h$ is the indicator function of the nuclear norm ball, computing the proximal operator of $h$ requires a full singular value decomposition while the linear minimization oracle over the nuclear norm ball requires only the leading singular vector to be computed (\cite{jaggi2010simple}, \cite{yurtsever2017}). Unfortunately, the regularity assumptions required by classical Frank-Wolfe style algorithms are too restrictive to apply to general problems like \eqref{PProb}.

While finalizing this work, we became aware of the recent work of \cite{Yurt}, who independently developed a conditional gradient-based framework which allows one to solve composite optimization problems involving a Lipschitz-smooth function $f$ and a nonsmooth function $g$,
\nnewq{
\min\limits_{x\in \C}\brac{f(x) + g\para{Tx}}.
}
The main idea is to replace $g$ with its Moreau envelope of index $\beta_k$ at each iteration $k$, with the index parameter $\beta_k$ going to $0$. This is equivalent to partial minimization with a quadratic penalization term, as in our algorithm. Like our algorithm, that of~\cite{Yurt} is able to handle problems involving both smooth and nonsmooth terms, intersection of multiple constraint sets and affine constraints, however their algorithms employ different methods for these situations. Our algorithm uses an augmented Lagrangian to handle the affine constraint while the conditional gradient framework treats the affine constraint as a nonsmooth term $g$ and uses penalization to smooth the indicator function corresponding to the affine constraint. In particular circumstances, outlined in more detail in \secref{sec:comparison}, our algorithms agree completely.

Another recent and parallel work to ours is that of \cite{Gidel}, where the Frank-Wolfe via Augmented Lagrangian (FW-AL) is developed to approach the problem of minimizing a Lipschitz-smooth function over a convex, compact set with a linear constraint,
\nnewq{
\min\limits_{x \in \C}\brac{f(x) : \ Ax=0}.
}
The main idea of FW-AL is to use the augmented Lagrangian to handle the linear constraint and then apply the classical augmented Lagrangian algorithm, except that the marginal minimization on the primal variable that is usually performed is replaced by an inner loop of Frank-Wolfe. It turns out that the problem they consider is a particular case of \eqref{PProb}, discussed in \secref{sec:comparison}.
\end{subsection}

\begin{subsection}{Organization of the paper}
In \secref{sec:notation} we introduce the notation and review some necessary material from convex and real analysis. In Section \ref{sec:alg} we present the \textbf{C}onditional \textbf{G}radient with \textbf{A}ugmented \textbf{L}agrangian and \textbf{P}roximal-step \textbf{(\cgal)} algorithm and the underlying assumptions. In Section \ref{sec:convan}, we first state our main convergence results and then turn to their proof. The latter is divided in three main parts. First we show the asymptotic feasibility, then the boundedness of the dual multiplier in the augmented Lagrangian and finally the optimality guarantees, i.e. weak convergence of the sequence $\seq{\mu_k}$ to a solution of the dual problem, weak subsequential convergence of the sequence $\seq{x_k}$ to a solution of the primal problem, and convergence of the Lagrangian values, and  with convergence rates. In \secref{sec:app} we describe how our framework can be instantiated to solve a variety of composite optimization problems. In \secref{sec:comparison} we provide a more detailed discussion comparing \cgal to prior work. Some numerical results are reported in Section~\ref{sec:num}. 

For readers who are primarily interested in the practical perspective, we suggest skipping directly to \secref{sec:alg} for the algorithms and its assumptions or  \secref{sec:convan} for the main convergence results.
\end{subsection}

\section{Notation and Preliminaries}\label{sec:notation}


We first recall some important definitions and results from convex analysis. For a more comprehensive coverage we refer the interested reader to \cite{BAUSCHCOMB,PEYPOU} and \cite{rockafellar1997convex} in the finite dimensional case. Throughout, we let $\mc{H}$ denote an arbitrary real Hilbert space and $g$ an arbitrary function from $\mc{H}$ to the real extended line, namely $g:\mc{H}\to\R\cup\brac{\pinfty}$. The function $g$ is said to belong to $\Gamma_0\para{\mc{H}}$ if it is proper, convex, and lower semi-continuous. The \emph{domain} of $g$ is defined to be $\dom\para{g}\eqdef  \brac{x\in\mc{H}: g\para{x}<\pinfty}$. The \emph{Legendre-Fenchel conjugate} of $g$ is the function $g^{*}:\mc{H}\to\R\cup\brac{\pinfty}$ such that, for every $u\in\mc{H}$,
\newq{
	g^{*}\para{u} \eqdef  \sup\limits_{x\in\mc{H}} \brac{\ip{u,x}{}-g\para{x}}.
} 
Notice that
\begin{equation}\label{fact}
		g_1 \leq g_2 \quad \implies \quad g_2^{*} \leq g_1^{*}.
\end{equation}

\paragraph{Moreau proximal mapping and envelope}
The \emph{proximal operator} for the function $g$ is defined to be
\newq{
	\prox_{g}\para{x} \eqdef  \argmin\limits_{y\in\mc{H}}\brac{g(y) + \frac{1}{2}\norm{x-y}{}^2}
}
and its \emph{Moreau envelope} with parameter $\beta$ as
\begin{equation}\label{moreau_env}
	g^{\beta}\para{x} \eqdef  \inf\limits_{y\in\mc{H}}\brac{g(y) + \frac{1}{2\beta}\norm{x-y}{}^2}.
\end{equation}
Denoting $x^+=\prox_{g}\para{x}$, we have the following classical inequality (see, for instance, \cite[Chapter 6.2.1]{PEYPOU}): for every $y\in\HH$,
\begin{equation}\label{non-exp}
2\left[g(x^+)-g(y)\right]+\|x^+-y\|^2-\norm{x^{}-y}{}^2+\|x^+-x\|^2\leq 0.
\end{equation}
We recall that the \emph{subdifferential} of the function $g$ is defined as the set-valued operator $\partial g: \mc{H} \to 2^{\mc{H}}$ such that, for every $x$ in $\mc{H}$,
\begin{equation}\label{subdiff}
\partial g(x)=\enscond{u \in \mc{H}}{g(y)\geq g(x)+\ip{u, y-x}{} \quad \forall y\in\mc{H}}.
\end{equation}
We denote $\dom(\p g) \eqdef \enscond{x \in \HH}{\p g(x) \neq \emptyset}$. When $g$ belongs to $\Gamma_0\para{\mc{H}}$, it is well-known that the subdifferential is a maximal monotone operator.
If, moreover, the function is G\^ateaux differentiable at $x\in\mc{H}$, then $\partial g(x)=\left\{\nabla g(x)\right\}$. For $x \in \dom(\p g)$, the \emph{minimal norm selection} of $\partial g(x)$ is defined to be the unique element $\brac{\sbrac{\partial g\para{x}}^0} \eqdef \Argmin\limits_{y\in \partial g \para{x}}\norm{y}{}$. Then we have the following fundamental result about Moreau envelopes.

\begin{proposition}{}\label{prop:moreau}
Given a function $g\in\Gamma_0\para{\HH}$, we have the following:
\begin{enumerate}[label=(\roman*)]
\item\label{moreauclimconv} The Moreau envelope is convex, real-valued, and continuous.
\item\label{moreauclaim1} Lax-Hopf formula: the Moreau envelope is the viscosity solution to the following Hamilton Jacobi equation: 
\begin{equation}
\label{LH}
\begin{cases}\frac{\p}{\p \beta}g^{\beta}\para{x} = -\frac{1}{2}\norm{\nabla_x g^{\beta}\para{x}}{}^2 \quad & \para{x,\beta}\in \HH\times (0,\pinfty)\\ g^0\para{x} = g\para{x}\quad & x\in\HH. \end{cases}
\end{equation}
\item\label{moreauclaim2} The gradient of the Moreau envelope is $\frac{1}{\beta}$-Lipschitz continuous and is given by the expression
\newq{
	\nabla_x g^{\beta}\para{x} = \frac{x-\prox_{\beta g}\para{x}}{\beta}.
}
\item\label{moreauclaim3} $\forall x \in \dom(\p g)$, $\norm{\nabla g^{\beta}\para{x}}{} \nearrow \norm{\sbrac{\p g\para{x}}^0}{}$ as $\beta\searrow 0$.
\item\label{moreauclaim4} $\forall x \in \HH$, $g^{\beta}(x) \nearrow g(x)$ as $\beta\searrow 0$. In addition, given two positive real numbers $\beta'<\beta$, for all $x\in \HH$ we have
\begin{equation*}
\begin{split}
0 \leq g^{\beta'}\para{x}-g^{\beta}\para{x} & \leq \frac{\beta-\beta'}{2}\norm{\nabla_x g^{\beta'}\para{x}}{}^2;\\
0 \leq g\para{x}-g^{\beta}\para{x} & \leq \frac{\beta}{2}\norm{\sbrac{\p g\para{x}}^0}{}^2.
\end{split}
\end{equation*}
\end{enumerate}
\end{proposition}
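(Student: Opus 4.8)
The plan is to establish the five items essentially in the order \ref{moreauclimconv}, \ref{moreauclaim2}, \ref{moreauclaim4} (monotone convergence), \ref{moreauclaim3}, \ref{moreauclaim1}, \ref{moreauclaim4} (the two inequalities), invoking classical facts about infimal convolutions and resolvents of maximal monotone operators (as in \cite{BAUSCHCOMB}) and bootstrapping the finer statements from the coarser ones. For \ref{moreauclimconv}: $g^{\beta}$ is the infimal convolution of $g\in\Gamma_0(\HH)$ with the coercive kernel $\frac{1}{2\beta}\norm{\cdot}{}^{2}$, so using an affine minorant of $g$ the infimum in \eqref{moreau_env} is attained and finite at every $x\in\HH$, whence $g^{\beta}$ is real-valued; it is convex (infimal convolution of convex functions) and bounded above on bounded sets by $x\mapsto g(x_{0})+\frac{1}{2\beta}\norm{x-x_{0}}{}^{2}$ for any fixed $x_{0}\in\dom(g)$, and a convex function bounded above on a neighborhood of a point is continuous there. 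For \ref{moreauclaim2}: the (unique) minimizer in \eqref{moreau_env} is $\prox_{\beta g}(x)$, and Moreau's differentiability theorem for the inf-convolution with a smooth coercive kernel gives Fr\'echet differentiability of $g^{\beta}$ with $\nabla_x g^{\beta}(x)=\frac{1}{\beta}(x-\prox_{\beta g}(x))$; since the resolvent $J_{\beta}\eqdef\prox_{\beta g}=(I+\beta\p g)^{-1}$ is firmly nonexpansive, so is $I-J_{\beta}$, whence $\nabla_x g^{\beta}$ is $\frac{1}{\beta}$-Lipschitz.

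For the monotone-convergence part of \ref{moreauclaim4}: taking $y=x$ in \eqref{moreau_env} gives $g^{\beta}(x)\le g(x)$, and for $\beta'<\beta$ the objective defining $g^{\beta'}(x)$ pointwise dominates the one defining $g^{\beta}(x)$, so $g^{\beta'}(x)\ge g^{\beta}(x)$; thus $g^{\beta}(x)$ is nondecreasing as $\beta\searrow0$ and majorized by $g(x)$. Since $\prox_{\beta g}(x)\to x$ as $\beta\to0$ (a standard estimate via an affine minorant), combining $g^{\beta}(x)\ge g(\prox_{\beta g}(x))$ with lower semicontinuity of $g$ gives $\liminf_{\beta\to0}g^{\beta}(x)\ge g(x)$, hence $g^{\beta}(x)\nearrow g(x)$ (also when $g(x)=+\infty$); in particular this is the initial condition $g^{0}=g$ needed in \ref{moreauclaim1}. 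For \ref{moreauclaim3}: identify $\nabla_x g^{\beta}=(\p g)_{\beta}$, the Yosida regularization of the maximal monotone operator $\p g$, and use three classical properties. (a) For $x\in\dom(\p g)$, $\norm{(\p g)_{\beta}x}{}\le\norm{[\p g(x)]^{0}}{}$, from nonexpansiveness of $J_{\beta}$ applied to $x$ and $x+\beta[\p g(x)]^{0}$. (b) $\beta\mapsto\norm{(\p g)_{\beta}x}{}$ is nonincreasing: substitute $J_{\lambda}x-J_{\mu}x=\mu(\p g)_{\mu}x-\lambda(\p g)_{\lambda}x$ into the monotonicity inequality for the pairs $(J_{\lambda}x,(\p g)_{\lambda}x)$ and $(J_{\mu}x,(\p g)_{\mu}x)$ in the graph of $\p g$ and simplify. (c) For $x\in\dom(\p g)$, $(\p g)_{\beta}x\to[\p g(x)]^{0}$ strongly as $\beta\to0$, from $J_{\beta}x\to x$, norm-boundedness and hence weak sequential compactness of the net $(\p g)_{\beta}x$, demiclosedness of the graph of $\p g$, and the characterization of the minimal-norm selection. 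Properties (b) and (c) yield $\norm{\nabla_x g^{\beta}(x)}{}\nearrow\norm{[\p g(x)]^{0}}{}$.

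For \ref{moreauclaim1}: fix $x$ and differentiate $\beta\mapsto g^{\beta}(x)$. With $p_{\beta}\eqdef\prox_{\beta g}(x)$, the upper estimate $g^{\beta+t}(x)\le g(p_{\beta})+\frac{1}{2(\beta+t)}\norm{x-p_{\beta}}{}^{2}$ and the matching lower estimate using $p_{\beta+t}$ give, after dividing by $t$ and letting $t\to0$ (using continuity of $\beta\mapsto\prox_{\beta g}(x)$), that $\frac{\p}{\p\beta}g^{\beta}(x)=-\frac{1}{2\beta^{2}}\norm{x-p_{\beta}}{}^{2}=-\frac{1}{2}\norm{\nabla_x g^{\beta}(x)}{}^{2}$. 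Hence $g^{\beta}$ is a classical, hence viscosity, solution of \eqref{LH} with initial data $g^{0}=g$ from \ref{moreauclaim4}, and uniqueness of the viscosity solution in this Hamilton--Jacobi setting is invoked by reference. Finally, the two inequalities in \ref{moreauclaim4}: as $s\mapsto g^{s}(x)$ is $C^{1}$ on $(0,\infty)$ by \ref{moreauclaim1}, integrating gives $g^{\beta'}(x)-g^{\beta}(x)=\frac{1}{2}\int_{\beta'}^{\beta}\norm{\nabla_x g^{s}(x)}{}^{2}\,ds$; bounding the integrand by $\norm{\nabla_x g^{\beta'}(x)}{}^{2}$ for $s\ge\beta'$ (monotonicity from \ref{moreauclaim3}) gives the first inequality, its nonnegativity being the monotonicity already shown, and letting $\beta'\to0$ with $g^{\beta'}(x)\to g(x)$ and $\norm{\nabla_x g^{\beta'}(x)}{}\to\norm{[\p g(x)]^{0}}{}$ gives the second one for $x\in\dom(\p g)$ (for $x\notin\dom(\p g)$ the right-hand side is $+\infty$ by convention).

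The only steps that go beyond bookkeeping are in \ref{moreauclaim1} and \ref{moreauclaim3}: the exact identification of $\frac{\p}{\p\beta}g^{\beta}(x)$ through the two-sided difference quotient (which hinges on continuity of $\beta\mapsto\prox_{\beta g}(x)$) and the short monotonicity computation for $\beta\mapsto\norm{\nabla_x g^{\beta}(x)}{}$; moreover the viscosity-solution characterization in \ref{moreauclaim1} is not elementary in infinite dimensions and will be cited rather than reproved. Everything else is assembly of classical results on Moreau envelopes, proximal operators, and Yosida regularizations.
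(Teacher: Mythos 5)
Your proposal is correct, and it does something genuinely different from the paper: the paper's proof of this proposition is essentially a list of citations (to \cite{BAUSCHCOMB} for \ref{moreauclimconv}, \ref{moreauclaim2}, \ref{moreauclaim3} and the monotone convergence in \ref{moreauclaim4}, and to \cite{attouch} for the Lax--Hopf claim \ref{moreauclaim1}), with only the two inequalities of \ref{moreauclaim4} argued directly --- the first by combining \ref{moreauclaim1} with convexity of $\beta \mapsto g^{\beta}(x)$ via the subgradient inequality, the second by letting $\beta' \to 0$. You instead reconstruct the classical proofs: the infimal-convolution argument for \ref{moreauclimconv}, firm nonexpansiveness of the resolvent for \ref{moreauclaim2}, the Yosida-approximation facts (norm bound, monotonicity in $\beta$, strong convergence to the minimal-norm selection) for \ref{moreauclaim3}, and the two-sided difference quotient in $\beta$ for \ref{moreauclaim1}; all of these are sound and match the arguments in the cited sources. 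Your derivation of the first inequality in \ref{moreauclaim4} by writing $g^{\beta'}(x)-g^{\beta}(x)=\tfrac12\int_{\beta'}^{\beta}\|\nabla_x g^{s}(x)\|^2\,ds$ and bounding the integrand via the monotonicity from \ref{moreauclaim3} is an equivalent variant of the paper's convexity argument (indeed, that monotonicity is exactly what makes $\beta\mapsto g^{\beta}(x)$ convex). What your write-up buys is self-containedness; what it costs is that the viscosity-solution characterization in \ref{moreauclaim1} still has to be cited, as you acknowledge.

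One small caveat: your argument that $g^{\beta}(x)\nearrow g(x)$ for \emph{all} $x\in\HH$ relies on $\prox_{\beta g}(x)\to x$ as $\beta\to 0$, which holds only for $x\in\overline{\dom(g)}$; in general $\prox_{\beta g}(x)$ converges to the projection of $x$ onto $\overline{\dom(g)}$. For $x\notin\overline{\dom(g)}$ the lower-semicontinuity step does not give $\liminf_{\beta\to 0} g^{\beta}(x)\ge g(x)=+\infty$, and you need instead the one-line estimate $g^{\beta}(x)\ge g(\prox_{\beta g}(x))+\tfrac{1}{2\beta}\,d\bigl(x,\overline{\dom(g)}\bigr)^2\to+\infty$, using an affine minorant of $g$ to bound the first term from below. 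This is a cosmetic fix, not a flaw in the overall approach.
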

\begin{proof}
\ref{moreauclimconv}: see \cite[Proposition~12.15]{BAUSCHCOMB}. The proof for \ref{moreauclaim1} can be found in \cite[Lemma~3.27 and Remark~3.32]{attouch} (see also~\cite{Imbert01} or \cite[Section~3.1]{ambrosiobook}). The proof for claim \ref{moreauclaim2} can be found in \cite[Proposition 12.29]{BAUSCHCOMB} and the proof for claim \ref{moreauclaim3} can be found in \cite[Corollary 23.46]{BAUSCHCOMB}. For the first part in \ref{moreauclaim4}, see~\cite[Proposition~12.32(i)]{BAUSCHCOMB}. To show the first inequality in \ref{moreauclaim4}, combine \ref{moreauclaim1} and convexity of the function $\beta \mapsto g^{\beta}\para{x}$ for every $x \in \HH$. The second inequality follows from the first one and \ref{moreauclaim3}, taking the limit as $\beta'\to 0$.
\end{proof}

\begin{remark}
$~$\\\vspace*{-0.5cm}
\begin{enumerate}[label=(\roman*)]
\item While the regularity claim in \propref{prop:moreau}\ref{moreauclaim2} of the Moreau envelope $g^{\beta}\para{x}$ w.r.t. $x$ is well-known, a less known result is the $C^1$-regularity w.r.t. $\beta$ for any $x \in \HH$ (\propref{prop:moreau}\ref{moreauclaim1}). To our knowledge, the proof goes back, at least, to the book of \cite{attouch}. Though it has been rediscovered in the recent literature in less general settings.
	
\item For given functions $H: \HH \to \R$ and $g_0: \HH \to \R$, a natural generalization of the Hamilton-Jacobi equation in \eqref{LH} is
	\begin{equation*}
	\begin{cases}\frac{\p}{\p \beta}g\para{x,\beta} +H\left(\nabla_x g\para{x,\beta}\right)=0 & \para{x,\beta}\in \HH\times (0,\pinfty)\\ 
	g\para{x,0} = g\para{x}\quad & x\in\HH.
	\end{cases}
	\end{equation*}
	Supposing that $H$ is convex and that $\lim\limits_{\norm{p}{}\to \pinfty} H(p)/\norm{p}{} = \pinfty$, the solution of the above system is given by the Lax-Hopf formula (see \cite[Theorem~5, Section~3.3.2]{evans10}\footnote{The proof in \cite{evans10} is given in the finite-dimensional case but it extends readily to any real Hilbert space.}):
	\[
	g\para{x,t} \eqdef  \inf\limits_{y\in\mc{H}}\brac{g_0(y) + tH^{*}\left(\frac{y-x}{t}\right)} .
	\]
	If $H(p)=\frac{1}{2}\norm{p}{}^2$, then $H^{*}(p)=\frac{1}{2}\norm{p}{}^2$ and we recover the result in \propref{prop:moreau}.
\end{enumerate}
\end{remark}

\paragraph{Regularity of differentiable functions}
In what follows, we introduce some definitions related with regularity of differentiable functions. They will provide useful upper-bounds and descent properties. Notice that the the notions and results of this part are independent from convexity.
\begin{definition}{($\omega$-smoothness)}\label{omegasmooth}
		Consider a function $\omega: \R_+ \to \R_+$ such that $\omega(0)=0$ and 
		\begin{equation}\label{def_xi}
		\xi\para{s} \eqdef  \int_{0}^1 \omega\para{st}dt
		\end{equation} is non-decreasing. A differentiable function $g:\mc{H}\to\R$ is said to belong to $C^{1,\omega}\para{\mc{H}}$ or to be $\omega$-smooth if the following inequality is satisfied for every $x,y\in\mc{H}$:
		\begin{equation*}
			\norm{\nabla g\para{x}-\nabla g\para{y}}{}\leq \omega\para{\norm{x-y}{}}.
		\end{equation*}
	\end{definition}
	\begin{lemma}{($\omega$-smooth Descent Lemma)}\label{prop:desclemma}
		Given a function $g\in C^{1,\omega}\para{\mc{H}}$ we have the following inequality: for every $x$ and $y$ in $\mc{H}$, 
		\begin{equation*}\label{omegadesc}
			g\para{y}-g\para{x} \leq \ip{\nabla g\para{x}, y-x}{} +\norm{y-x}{}\xi\para{\norm{y-x}{}},
		\end{equation*}
		where $\xi$ is defined in \eqref{def_xi}.
	\end{lemma}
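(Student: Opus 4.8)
The plan is to reduce the inequality to a one-dimensional statement along the segment joining $x$ and $y$, and then combine the fundamental theorem of calculus with the defining estimate of $\omega$-smoothness. Fix $x,y\in\mc{H}$, write $d \eqdef y-x$, and introduce $\phi:[0,1]\to\R$, $\phi(t) \eqdef g(x+td)$. Since $g$ is differentiable, the chain rule gives that $\phi$ is differentiable on $[0,1]$ with $\phi'(t)=\ip{\nabla g(x+td),d}{}$; moreover $t\mapsto\phi'(t)$ is integrable on $[0,1]$ because $\big|\phi'(t)-\phi'(0)\big|\leq\norm{d}{}\,\norm{\nabla g(x+td)-\nabla g(x)}{}\leq\norm{d}{}\,\omega(t\norm{d}{})$ and $\int_0^1\omega(t\norm{d}{})\,dt=\xi(\norm{d}{})<\pinfty$. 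Hence
\[
g(y)-g(x)=\phi(1)-\phi(0)=\int_0^1\ip{\nabla g(x+td),d}{}\,dt .
\]

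Next I would subtract the linear term, using $\ip{\nabla g(x),d}{}=\int_0^1\ip{\nabla g(x),d}{}\,dt$, to obtain
\[
g(y)-g(x)-\ip{\nabla g(x),y-x}{}=\int_0^1\ip{\nabla g(x+td)-\nabla g(x),\,d}{}\,dt .
\]
Applying Cauchy--Schwarz inside the integral and then Definition~\ref{omegasmooth} to the pair $(x+td,x)$ (noting $\norm{(x+td)-x}{}=t\norm{d}{}$) bounds the integrand by $\norm{\nabla g(x+td)-\nabla g(x)}{}\,\norm{d}{}\leq\omega(t\norm{d}{})\,\norm{d}{}$, so that
\[
g(y)-g(x)-\ip{\nabla g(x),y-x}{}\leq\norm{d}{}\int_0^1\omega(t\norm{d}{})\,dt=\norm{y-x}{}\,\xi(\norm{y-x}{}),
\]
where the last equality is the definition \eqref{def_xi} of $\xi$ evaluated at $s=\norm{y-x}{}$. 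This is precisely the asserted inequality, and convexity of $g$ is nowhere used, consistent with the remark preceding the statement.

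The only genuinely delicate point is the justification of the fundamental theorem of calculus in the first display: one must invoke that a real function on $[0,1]$ that is differentiable everywhere and whose derivative is Lebesgue integrable satisfies $\phi(1)-\phi(0)=\int_0^1\phi'$. This is classical but not entirely trivial (it rests on the Luzin $N$ property of everywhere-differentiable functions); alternatively, if $\omega$ is assumed continuous at $0$, then $\nabla g$ is continuous, $\phi'$ is continuous, and the elementary form of the theorem suffices. Everything else --- the chain rule, Cauchy--Schwarz, and recognizing the resulting one-dimensional integral as $\xi(\norm{y-x}{})$ --- is routine.
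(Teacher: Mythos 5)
Your proof is correct and follows essentially the same route as the paper's: the fundamental theorem of calculus along the segment from $x$ to $y$, subtraction of the linear term, Cauchy--Schwarz, the $\omega$-smoothness bound, and recognition of $\xi$. The only difference is that you explicitly justify the applicability of the fundamental theorem of calculus (integrability of $\phi'$), a point the paper passes over silently; this is a welcome extra precision but not a different argument.
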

	\begin{proof} We recall here the simple proof for completeness:
		\newq{
			g\para{y}-g\para{x}	&= \int_{0}^1 \frac{d}{dt}g\para{x+t\para{y-x}}dt\\
			&=\int_{0}^1\ip{\nabla g\para{x}, y-x}{}dt + \int_{0}^1\ip{\nabla g\para{x+t\para{y-x}}-\nabla g\para{x},y-x}{}dt\\
			&\leq \ip{\nabla g\para{x},y-x}{} + \norm{y-x}{}\int_{0}^1 \norm{\nabla g\para{x+t\para{y-x}}-\nabla g\para{x}}{}dt\\
			&\leq \ip{\nabla g\para{x}, y-x}{} + \norm{y-x}{}\int_{0}^1\omega\para{t\norm{y-x}{}}dt,
		}
	where in the first inequality we used Cauchy-Schwartz and in the second Definition \ref{omegasmooth}. We conclude using the definition of $\xi$. 
	\end{proof}
	For $L>0$ and $\omega \para{t} = Lt^\nu$, $\nu \in ]0,1]$, $C^{1,\omega}\para{\mc{H}}$ is the space of differentiable functions with H\"older continuous gradients, in which case $\xi\para{s} = Ls^\nu/(1+\nu)$ and the Descent Lemma reads
	\nnewq{\label{classicaldescent}
	g\para{y}-g\para{x} \leq \ip{\nabla g\para{x}, y-x}{} +\frac{L}{1+\nu}\norm{y-x}{}^{1+\nu} ,	
	}
	see e.g., \cite{Nesterov2015,nesterov2018}. When $\nu=1$, we have that $C^{1,\omega}\para{\mc{H}}$ is the class of differentiable functions with $L$-Lipschitz continuous gradient, and one recovers the classical Descent Lemma.
	
	Now, following \cite{BauschkeBolteTeboulle}, we introduce some notions that allow one to further generalize \eqref{classicaldescent}. Given a function $G:\mc{H}\to\R\cup\brac{\pinfty}$, differentiable on the open set $\C_0\subset\inte\left(\dom \left(G\right)\right)$, define the \emph{Bregman divergence} of $G$ as the function $D_G: \dom \left(G\right) \times \C_0 \to \R$,
	\begin{equation}\label{bregman}
		D_G(x,y)=G(x)-G(y)-\langle \nabla G(y),x-y\rangle.
	\end{equation} Then we have the following result. 
	\begin{lemma}{(Generalized Descent Lemma, \cite[Lemma 1]{BauschkeBolteTeboulle})}\label{desclemma}
		Let $G$ and $g$ be differentiable on $\C_0$, where $\C_0$ is an open subset of $\inte\left(\dom \left(G\right)\right)$. Assume that $G-g$ is convex on $\C_0$. Then, for every $x$ and $y$ in $\C_0$,
		$$g(y)\leq g(x)+\langle \nabla g(x), y-x\rangle +D_G(y,x).$$
	\end{lemma}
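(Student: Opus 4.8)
The plan is to derive the inequality directly from the first-order characterization of convexity applied to $G-g$. First I would observe that, since $G$ and $g$ are both differentiable on $\C_0$, so is $G-g$, with gradient $\nabla G-\nabla g$ there. By hypothesis $G-g$ is convex on $\C_0$, so the gradient inequality yields, for all $x,y\in\C_0$,
\[
(G-g)(y)\geq (G-g)(x)+\ip{\nabla G(x)-\nabla g(x),\ y-x}{}.
\]
Here I use, as is implicit in the statement, that $\C_0$ is convex — or at least that each segment $[x,y]$ with $x,y\in\C_0$ lies in $\C_0$ — so that $t\mapsto (G-g)(x+t(y-x))$ is convex and differentiable on $[0,1]$ and the inequality follows by comparing its endpoint values against its derivative at $0$.

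The remaining step is pure rearrangement. Expanding the previous display and collecting the $g$-terms on one side gives
\[
g(x)-g(y)+\ip{\nabla g(x),\ y-x}{}\ \geq\ G(x)-G(y)+\ip{\nabla G(x),\ y-x}{},
\]
and the right-hand side is exactly $-D_G(y,x)$ by the definition \eqref{bregman} of the Bregman divergence (well defined since $y\in\C_0\subset\dom(G)$ and $x\in\C_0$). Reorganizing the last inequality yields precisely $g(y)\leq g(x)+\ip{\nabla g(x),y-x}{}+D_G(y,x)$, which is the claim.

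There is essentially no analytical difficulty here: the entire content is the convexity hypothesis on $G-g$, and everything else is bookkeeping. The only points deserving care are the transfer of differentiability (and of the gradient formula) from $G$ and $g$ to $G-g$, and the legitimacy of invoking the first-order convexity inequality on $\C_0$, which is why one wants $\C_0$ convex (or argues segment by segment). To emphasize the mechanism, I would note that this is the Bregman analogue of the classical Descent Lemma: choosing $G(x)=\frac{L}{2}\norm{x}{}^2$ makes $G-g$ convex precisely when $g$ has $L$-Lipschitz gradient, and then $D_G(y,x)=\frac{L}{2}\norm{y-x}{}^2$, recovering Lemma~\ref{prop:desclemma} with $\nu=1$.
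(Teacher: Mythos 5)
Your proof is correct and follows essentially the same route as the paper's: apply the first-order gradient inequality to the convex function $G-g$ on $\C_0$ and rearrange using the definition of $D_G$. Your added caveat about convexity of $\C_0$ (needed to legitimately invoke the gradient inequality for a function convex only on $\C_0$) is a reasonable point of care that the paper passes over silently, but it does not change the argument.
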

\begin{proof}
	For our purpose, we intentionally weakened the hypothesis needed in the original result of \cite[Lemma 1]{BauschkeBolteTeboulle}. We repeat their argument but show the result is still valid under our weaker assumption. Let $x$ and $y$ be in $\C_0$, where, by hypothesis, $\C_0$ is open and contained in $\inte\left(\dom \left(G\right)\right)$. As $G-g$ is convex and differentiable on $\C_0$, from the gradient inequality \eqref{subdiff} we have, for all $y\in \C_0$,
	$$\left(G-g\right)(y)\geq\left(G-g\right)(x)+\langle \nabla\left(G-g\right)(x), y-x\rangle.$$
	Rearranging the terms and using the definition of $D_G$ in \eqref{bregman}, we obtain the claim.
\end{proof}
	The previous lemma suggests the introduction of the following definition, which extends Definition \ref{omegasmooth}.
	\begin{definition}{($\left(G,\zeta\right)$-smoothness)}\label{def_smooth}
		Let $G:\mc{H}\to\R\cup\brac{\pinfty}$ and $\zeta: ]0,1] \to \R_+$. The pair $\left(g,\C\right)$, where $g:\mc{H}\to\R\cup\brac{\pinfty}$ and $\C\subset\dom(g)$, is said to be $\left(G,\zeta\right)$-smooth if there exists an open set $\C_0$ such that $\C\subset\C_0 \subset \inte\left(\dom \left(G\right)\right)$ and 
		\begin{enumerate}[label=(\roman*)]
			\item $G$ and $g$ are differentiable on $\C_0$;
			\item $G-g$ is convex on $\C_0$;
			\item it holds
			\begin{equation}\label{breg_const}
			K_{\left(G,\zeta,\C\right)}\ \eqdef \sup_{\substack{x,s\in \C;  \ \gamma\in]0,1] \\ z=x+\gamma\left(s-x\right)}} \frac{D_G(z,x)}{\zeta\left(\gamma\right)} \quad < \quad \pinfty.		
			\end{equation}
		\end{enumerate}
	\end{definition}
	
$K_{\left(G,\zeta,\C\right)}$ is a far-reaching generalization of the standard curvature constant widely used in the literature of conditional gradient.
	
\begin{remark}\label{remark}
Assume that $\left(g,\C\right)$ is $\left(G,\zeta\right)$-smooth. Using first Lemma \ref{desclemma} and then the definition in \eqref{breg_const}, we have the following descent property: for every $x,s\in\C$ and for every $\gamma\in]0,1]$,
	\begin{equation*}
		\begin{split}
			g\left(x+\gamma \left(s-x\right)\right) & \leq g(x)+\gamma\langle \nabla g(x), s-x\rangle +D_G(x+\gamma \left(s-x\right),x)\\
			& \leq g(x)+\gamma \langle \nabla g(x), s-x\rangle +K_{\left(G,\zeta,\C\right)} \zeta\left(\gamma\right).
		\end{split}
	\end{equation*}
Notice that, as in the previous definition, we do not require $\C$ to be convex. So, in general, the point $z=x+\gamma \left(s-x\right)$ may not lie in $\C$.
\end{remark}
	\begin{lemma}
	Suppose that the set $\C$ is bounded and denote by $\diam \eqdef  \sup_{x,y\in \C} \norm{x-y}{}$ its \emph{diameter}. Moreover, assume that the function $g$ is $\omega$-smooth on some open and convex subset $\C_0$ containing $\C$. Set $\zeta(\gamma)\eqdef \xi(\diam \gamma)$, where $\xi$ is given in \eqref{def_xi}. Then the pair $\left(g,\C\right)$ is $\left(g,\zeta\right)$-smooth with $K_{\left(g,\zeta,\C\right)}\leq \diam$.
	\end{lemma}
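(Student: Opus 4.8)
The plan is to verify, with the choice $G=g$, the three conditions defining $(G,\zeta)$-smoothness in Definition~\ref{def_smooth}. Conditions (i) and (ii) are immediate for this choice: $g$ is differentiable on the open set $\C_0\supset\C$ by the hypothesis of $\omega$-smoothness on $\C_0$, and since $g$ is then real-valued on $\C_0$ and $\C_0$ is open we have $\C_0\subset\inte(\dom(g))$; moreover $G-g\equiv 0$ is trivially convex on $\C_0$. Hence the whole content of the lemma reduces to establishing the finiteness estimate (iii), namely $K_{(g,\zeta,\C)}\le\diam$.

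For (iii), I would fix $x,s\in\C$ and $\gamma\in\,]0,1]$ and set $z=x+\gamma(s-x)$. Since $z$ is a convex combination of $x$ and $s$, the segment $[x,z]$ is contained in $[x,s]\subset\C_0$ by convexity of $\C_0$. I would then apply the $\omega$-smooth Descent Lemma (Lemma~\ref{prop:desclemma}) to the pair $(x,z)$: although that lemma is stated for $g\in C^{1,\omega}(\HH)$, its proof only uses the $\omega$-smoothness inequality along the segment joining the two points, and that segment lies in $\C_0$, so the conclusion
\[
g(z)-g(x)\leq\ip{\nabla g(x),z-x}{}+\norm{z-x}{}\,\xi\para{\norm{z-x}{}}
\]
remains valid. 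Rearranging and recalling the definition \eqref{bregman} of the Bregman divergence gives $D_g(z,x)\leq\norm{z-x}{}\,\xi\para{\norm{z-x}{}}$.

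It then remains to estimate this right-hand side. Because $z-x=\gamma(s-x)$ with $x,s\in\C$, we have $\norm{z-x}{}=\gamma\norm{s-x}{}\leq\gamma\,\diam\leq\diam$, and since $\xi$ is non-decreasing (Definition~\ref{omegasmooth}) we obtain $\xi\para{\norm{z-x}{}}\leq\xi\para{\gamma\,\diam}=\zeta(\gamma)$. Combining these, $D_g(z,x)\leq\gamma\,\diam\,\zeta(\gamma)\leq\diam\,\zeta(\gamma)$, so dividing by $\zeta(\gamma)$ and taking the supremum over $x,s\in\C$ and $\gamma\in\,]0,1]$ yields $K_{(g,\zeta,\C)}\leq\diam<\pinfty$, using boundedness of $\C$; note also $\zeta$ maps $]0,1]$ into $\R_+$ as required. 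I do not expect a real obstacle here: the only points needing a little care are the observation that the $\omega$-smooth Descent Lemma is applied on the convex set $\C_0$ rather than on all of $\HH$, and the degenerate case $\omega\equiv 0$, in which $D_g\equiv 0$ and the bound holds trivially with the convention $0/0=0$.
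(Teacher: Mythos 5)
Your proposal is correct and follows essentially the same route as the paper: take $G=g$, note that conditions (i)–(ii) of Definition~\ref{def_smooth} are immediate, apply the $\omega$-smooth Descent Lemma between $x$ and $z=x+\gamma(s-x)\in\C_0$ (valid because the segment lies in the convex set $\C_0$), and then bound $\norm{z-x}{}\,\xi\para{\norm{z-x}{}}\leq\gamma\,\diam\,\xi(\diam\gamma)$ using $\norm{s-x}{}\leq\diam$ and monotonicity of $\xi$. Your extra remarks on the local validity of the Descent Lemma and the degenerate case are slightly more careful than the paper's write-up but do not change the argument.
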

\begin{proof}
	With $G=g$ and $g$ being $\omega$-smooth on $\C_0$, both $G$ and $g$ are differentiable on $\C_0$ and $G-g\equiv0$ is convex on $\C_0$. Thus, all conditions required in \defref{def_smooth} hold true. It then remains to show \eqref{breg_const} with the bound $K_{\left(g,\zeta,\C\right)}\leq \diam$. First notice that, for every $x,s\in \C$ and for every $\gamma\in]0,1]$, the point $z=x+\gamma\left(s-x\right)$ belongs to $\C_0$. Indeed, $\C\subset\C_0$ and $\C_0$ is convex by hypothesis. In particular, as $g$ is $\omega$-smooth on $\C_0$, the Descent Lemma \ref{prop:desclemma} holds between the points $x$ and $z$. Then
	\begin{equation*}
	\begin{split}
		K_{\left(g,\zeta,\C\right)}&=\sup_{\substack{x,s\in \C;  \ \gamma\in]0,1] \\ z=x+\gamma\left(s-x\right)}} \frac{D_g(z,x)}{\zeta\left(\gamma\right)}\\
		&=\sup_{\substack{x,s\in \C;  \ \gamma\in]0,1] \\ z=x+\gamma\left(s-x\right)}} \frac{g(z)-g(x)-\langle \nabla g(x),z-x\rangle}{\xi(\diam \gamma)}\\
		&\leq\sup_{\substack{x,s\in \C;  \ \gamma\in]0,1] \\ z=x+\gamma\left(s-x\right)}} \frac{\norm{z-x}{}\xi\para{\norm{z-x}{}}}{\xi(\diam \gamma)}\\
		&=\sup_{\substack{x,s\in \C;  \ \gamma\in]0,1] }} \frac{\gamma\norm{s-x}{}\xi\para{\gamma\norm{s-x}{}}}{\xi(\diam \gamma)}\\
		&\leq\sup_{\gamma\in]0,1] } \frac{ \gamma \diam\xi\para{\diam\gamma }}{\xi(\diam \gamma)}=\diam.
	\end{split}		
	\end{equation*}
In the first inequality we used Lemma \ref{prop:desclemma}, while in the second we used that $\norm{s-x}{}\leq \diam$ (both $x$ and $s$ belong to $\C$, that is bounded by hypothesis) and the monotonicity of the function $\xi$ (see Definition \ref{omegasmooth}).
	\end{proof}

\paragraph{Indicator and support functions}
Given a subset $\C \subset \mc{H}$, we define its \emph{indicator function} as $\iota_{\C}(x)=0$ if $x$ belongs to $\C$ and $\iota_{\C}(x)=\pinfty$ otherwise. Recall that, if $\C$ is nonempty, closed, and convex, then $\iota_{\C}$ belongs to $\Gamma_0\para{\mc{H}}$. Remember also the definition of the \emph{support function} of $\C$, $\sigma_\C\eqdef  \iota_{\C}^{*}$. Equivalently, $\sigma_\C\para{x} \eqdef \sup\brac{\ip{z,x}{}: \ z\in \C}$. We denote by $\ri \left(\C\right)$ the \emph{relative interior} of the set $\C$ (in finite dimension, it is the interior for the topology relative to its affine full). We denote $\LinHull(C)$ as the subspace parallel to $\C$ which, in finite dimension, takes the form $\R(C-C)$. 

We have the following characterization of the support function from the relative interior in finite dimension. 
\begin{proposition}{(\cite[Lemma~1]{vaiter2015model})}\label{prop:supfun}
Let $\mc{H}$ be finite-dimensional and $\C\subset \mc{H}$ a nonempty, closed bounded and convex subset. If $0 \in \ri(\C)$, then $\sigma_{\C} \in \Gamma_0(\R^n)$ is sublinear, non-negative and finite-valued, and 
\newq{
\sigma_{\C}(x) = 0 \iff x \in (\LinHull(\C))^\bot.
}
\end{proposition}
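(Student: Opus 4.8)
The plan is to split the statement into the structural properties shared by every support function and the specific equivalence, and to note that only the latter genuinely uses the hypothesis $0\in\ri(\C)$.

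First I would establish the structural part. Being the pointwise supremum over $z\in\C$ of the continuous linear forms $x\mapsto\ip{z,x}{}$, the function $\sigma_\C$ is automatically convex, lower semicontinuous, positively homogeneous and subadditive, i.e.\ sublinear; this uses only $\C\neq\emptyset$. Non-negativity is immediate because $0\in\ri(\C)\subseteq\C$ forces $\sigma_\C(x)\geq\ip{0,x}{}=0$ for every $x$. For finiteness, boundedness of $\C$ gives $R\eqdef\sup_{z\in\C}\norm{z}{}<\pinfty$, and Cauchy--Schwarz yields $\sigma_\C(x)\leq R\norm{x}{}<\pinfty$ for all $x\in\mc{H}$ (which we identify with $\R^n$, $n=\dim\mc{H}$); a convex function that is finite-valued on all of $\R^n$ is continuous, so $\sigma_\C$ is proper, lsc and convex, hence $\sigma_\C\in\Gamma_0(\R^n)$.

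For the equivalence, the key preliminary remark is that since $0\in\C$ the affine hull of $\C$ passes through the origin, so it is a linear subspace and coincides with $\LinHull(\C)$; in particular $\C\subseteq\LinHull(\C)$. The inclusion $(\LinHull(\C))^\bot\subseteq\enscond{x}{\sigma_\C(x)=0}$ is then trivial: if $x\bot\LinHull(\C)$ then $\ip{z,x}{}=0$ for every $z\in\C$, so the supremum defining $\sigma_\C(x)$ vanishes. Conversely, suppose $\sigma_\C(x)=0$, i.e.\ $\ip{z,x}{}\leq 0$ for all $z\in\C$. Because $0\in\ri(\C)$ and the affine hull of $\C$ equals $\LinHull(\C)$, there is $\rho>0$ with $\enscond{v\in\LinHull(\C)}{\norm{v}{}\leq\rho}\subseteq\C$ (the standard description of the relative interior, see \cite{rockafellar1997convex}). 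For an arbitrary nonzero $v\in\LinHull(\C)$, both $\pm\rho v/(2\norm{v}{})$ lie in $\C$, hence $\ip{\pm\rho v/(2\norm{v}{}),x}{}\leq 0$, which forces $\ip{v,x}{}=0$; therefore $x\in(\LinHull(\C))^\bot$.

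I do not anticipate a real obstacle, but one point deserves care: the hypothesis $0\in\ri(\C)$ cannot be relaxed to $0\in\C$. Indeed, for $\C$ equal to a segment in $\R^2$ with one endpoint at the origin, say $\C=[0,e_1]$, one has $\LinHull(\C)=\R e_1$ and $\sigma_\C(y)=\max\{0,\ip{e_1,y}{}\}$, so $\sigma_\C(-e_1)=0$ even though $-e_1\notin(\R e_1)^\bot$; the converse argument breaks precisely because $0\notin\ri(\C)$, so there is no symmetric relative ball around $0$ inside $\C$. Thus the only genuinely load-bearing step is the use of $0\in\ri(\C)$ to produce the two-sided test vectors $\pm\rho v/(2\norm{v}{})\in\C$; everything else is bookkeeping with standard facts about support functions and affine hulls (see again \cite{rockafellar1997convex}).
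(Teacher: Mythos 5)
Your proof is correct and complete. The paper itself gives no argument for this proposition — it is stated with a bare citation to \cite[Lemma~1]{vaiter2015model} — so there is no in-paper proof to compare against; your elementary derivation (supremum of linear forms for the structural claims, and the two-sided test vectors $\pm\rho v/(2\norm{v}{})$ supplied by $0\in\ri(\C)$ for the nontrivial implication of the equivalence) is a valid self-contained substitute, and your counterexample correctly isolates why $0\in\ri(\C)$ cannot be weakened to $0\in\C$.
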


\paragraph{Coercivity}
We recall that a function $g$ is \emph{coercive} if $\lim_{\norm{x}{}\to\pinfty} g\para{x}=\pinfty$ and that coercivity is equivalent to the boundedness of the sublevel-sets~\cite[Proposition~11.11]{BAUSCHCOMB}. We have the following result, that relates coercivity to properties of the Fenchel conjugate. 
\begin{proposition}{(\cite[Theorem 14.17]{BAUSCHCOMB})}\label{prop:coerc}
	Given $g$ in $\Gamma_0\para{\HH}$, $g^{*}$ is coercive if and only if $0\in \inte \left(\dom (g)\right)$.  
\end{proposition}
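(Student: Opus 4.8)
The plan is to prove the two implications separately. The tools are: (a) a function in $\Gamma_0\para{\HH}$ is bounded above on a neighbourhood of any interior point of its domain (a standard Baire category argument applied to the closed sublevel sets, together with local boundedness above of convex functions); (b) $g\in\Gamma_0\para{\HH}$ implies $g^{*}\in\Gamma_0\para{\HH}$, so in particular $\dom\para{g^{*}}\neq\emptyset$; and, for the alternative route indicated at the end, the Fenchel--Moreau identity $g^{**}=g$.

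For the implication ``$0\in\inte\para{\dom\para{g}}\ \Rightarrow\ g^{*}$ coercive'' I would use a direct estimate. Pick $\rho>0$ and $M<\pinfty$ with $g\para{x}\leq M$ for all $x$ in the closed ball $\overline{B}\para{0,\rho}\subset\dom\para{g}$. Then, for every $u\in\HH$,
\[
g^{*}\para{u}\;\geq\;\sup_{\norm{x}{}\leq\rho}\brac{\ip{u,x}{}-g\para{x}}\;\geq\;\sup_{\norm{x}{}\leq\rho}\ip{u,x}{}-M\;=\;\rho\norm{u}{}-M ,
\]
so $g^{*}\para{u}\to\pinfty$ as $\norm{u}{}\to\pinfty$, i.e.\ $g^{*}$ is coercive.

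For the converse I would argue by contraposition: assuming $0\notin\inte\para{\dom\para{g}}$, I want to exhibit an unbounded sublevel set of $g^{*}$. Fix $u_0\in\dom\para{g^{*}}$. If $\inte\para{\dom\para{g}}\neq\emptyset$, separating the point $0$ from the open convex set $\inte\para{\dom\para{g}}$ yields $u\neq 0$ with $\ip{u,x}{}\geq 0$ for all $x\in\inte\para{\dom\para{g}}$, hence for all $x\in\dom\para{g}$ (since $\dom\para{g}\subseteq\overline{\inte\para{\dom\para{g}}}$). Then, for every $t\geq 0$,
\[
g^{*}\para{u_0-tu}\;=\;\sup_{x\in\dom\para{g}}\brac{\ip{u_0,x}{}-t\ip{u,x}{}-g\para{x}}\;\leq\;\sup_{x\in\dom\para{g}}\brac{\ip{u_0,x}{}-g\para{x}}\;=\;g^{*}\para{u_0},
\]
so the whole ray $\brac{u_0-tu:\ t\geq 0}$ lies in $\brac{g^{*}\leq g^{*}\para{u_0}}$, which is therefore unbounded and $g^{*}$ is not coercive.

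The main obstacle is the degenerate case of the converse in which $\dom\para{g}$ has empty interior: in a general Hilbert space one cannot always separate $0$ from $\dom\para{g}$ by a nonzero functional (think of $\dom\para{g}$ equal to a Hilbert cube), so the recession-ray argument above does not close, even though the conclusion still holds. In finite dimensions this gap is filled by a recession-cone computation, and in the general Hilbert setting it is precisely what \cite[Theorem~14.17]{BAUSCHCOMB} secures; equivalently, the statement is the companion characterization ``$\varphi$ coercive $\iff 0\in\inte\para{\dom\para{\varphi^{*}}}$'' applied to $\varphi=g^{*}$ together with $g^{**}=g$, but that characterization carries exactly the same difficulty. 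For the purposes of the paper one may simply invoke the cited result.
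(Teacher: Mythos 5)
The paper offers no proof of this proposition --- it is quoted verbatim from \cite[Theorem~14.17]{BAUSCHCOMB} --- so the only point to assess is whether your argument is complete. Your forward implication is correct and complete: local boundedness above of a $\Gamma_0$ function at an interior point of its domain (via Baire category) gives $g^{*}\para{u}\geq\rho\norm{u}{}-M$, which is even a linear minorization, hence coercivity. Your contrapositive for the converse is also correct \emph{when} $\inte\para{\dom\para{g}}\neq\emptyset$, and you are right that the separation argument genuinely breaks down when $\dom\para{g}$ has empty interior, so as written the converse is not established in full generality.

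The gap can be closed without any separation, and with a tool the paper itself already uses (in the proof of Lemma~\ref{lem:coercive}): by \cite[Proposition~14.15]{BAUSCHCOMB}, a function in $\Gamma_0\para{\HH}$ is coercive if and only if it admits a minorant of the form $a\norm{\cdot}{}+\beta$ with $a>0$, $\beta\in\R$. So assume $g^{*}$ is coercive; since $g\in\Gamma_0\para{\HH}$ implies $g^{*}\in\Gamma_0\para{\HH}$, there exist $a>0$ and $\beta\in\R$ with $g^{*}\geq a\norm{\cdot}{}+\beta$. Conjugating and using \eqref{fact} together with Fenchel--Moreau ($g^{**}=g$) gives
\[
g \;=\; g^{**}\;\leq\;\para{a\norm{\cdot}{}+\beta}^{*}\;=\;\iota_{\overline{B}\para{0,a}}-\beta ,
\]
i.e.\ $g\leq-\beta$ on $\overline{B}\para{0,a}$, whence $\overline{B}\para{0,a}\subset\dom\para{g}$ and $0\in\inte\para{\dom\para{g}}$. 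This handles the degenerate case uniformly and, combined with your forward direction, yields the full equivalence; note it is exactly the dual of your forward estimate, so the two halves of the proof become symmetric.
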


The \emph{recession function} (sometimes referred to as the horizon function) of $g$ at a given point $d\in\R^n$ is defined to be $g^{d,\infty}: \R^n \to \R\cup\left\{\pinfty\right\}$ such that, for every $x\in\R^n$,
	\newq{
		g^{d,\infty}\para{x} \eqdef  \lim\limits_{\alpha\to\infty}\frac{g\para{d+\alpha x}-g\para{d}}{\alpha}.
	}
Recall that, if $g$ is convex, the recession function is independent from the selection of the point $d\in\R^n$ and can be then simply denoted as $g^{\infty}$. In finite dimension, the following result relates coercivity to properties of the recession function. 
\begin{proposition}\label{prop:recfun}
Let $g\in\Gamma_0\para{\R^n}$ and $A:\R^m\to\R^n$ be a linear operator. Then,
\be[label=(\roman*)]
\item\label{eq:recfun1} 
$g\mbox{ coercive }\iff g^{\infty}\para{x}>0\quad\forall x\neq 0$.
\item\label{eq:recfun2} $g^{\infty} \equiv \sigma_{\dom\para{g^*}}$.
\item\label{eq:recfun3} $\para{g\circ A}^\infty\equiv g^\infty \circ A$.
\ee
In particular, we deduce that $g\circ A$ is coercive if and only if $\sigma_{\dom\para{g^*}}(Ax)>0$ for every $x\neq 0$.
\end{proposition}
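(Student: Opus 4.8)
The plan is to establish the three identities \ref{eq:recfun1}--\ref{eq:recfun3} in turn and then chain them to obtain the final assertion. Throughout we tacitly assume $g\circ A$ is proper (i.e.\ $A^{-1}(\dom(g))\neq\emptyset$), since otherwise there is nothing to prove. Identity \ref{eq:recfun3} is the quickest: it suffices to unfold the definition of the recession function. Fix $d$ with $Ad\in\dom(g)$; then for every $x\in\R^m$ and $\alpha>0$ we have $(g\circ A)(d+\alpha x)=g(Ad+\alpha Ax)$, so
\[
\frac{(g\circ A)(d+\alpha x)-(g\circ A)(d)}{\alpha}=\frac{g(Ad+\alpha Ax)-g(Ad)}{\alpha}\longrightarrow g^\infty(Ax)\quad\text{as }\alpha\to\infty,
\]
using that $Ad\in\dom(g)$ and that the recession function of the convex function $g$ does not depend on the base point; this is exactly $(g\circ A)^\infty(x)=g^\infty(Ax)$.

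For identity \ref{eq:recfun2} I would prove the two inequalities $g^\infty\ge\sigma_{\dom(g^*)}$ and $g^\infty\le\sigma_{\dom(g^*)}$ using Fenchel--Young and Fenchel--Moreau. Fix $d\in\dom(g)$ and $x\in\R^n$. For the first inequality, for any $y\in\dom(g^*)$ Fenchel--Young gives $g(d+\alpha x)\ge\ip{y,d+\alpha x}{}-g^{*}(y)$, hence $\alpha^{-1}\para{g(d+\alpha x)-g(d)}\ge\ip{y,x}{}+\alpha^{-1}\para{\ip{y,d}{}-g^{*}(y)-g(d)}$; letting $\alpha\to\infty$ yields $g^\infty(x)\ge\ip{y,x}{}$, and taking the supremum over $y\in\dom(g^*)$ gives $g^\infty(x)\ge\sigma_{\dom(g^*)}(x)$. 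For the reverse inequality, pick $\lambda>\sigma_{\dom(g^*)}(x)$ (nothing to prove if this is $\pinfty$); then $\ip{y,x}{}<\lambda$ for every $y\in\dom(g^*)$, so by Fenchel--Moreau (the supremum being effectively over $\dom(g^*)$),
\[
g(d+\alpha x)=\sup_{y\in\dom(g^*)}\brac{\para{\ip{y,d}{}-g^{*}(y)}+\alpha\ip{y,x}{}}\le g(d)+\alpha\lambda,
\]
where we used $\ip{y,d}{}-g^{*}(y)\le g(d)$ and $\alpha>0$; dividing by $\alpha$, letting $\alpha\to\infty$, and then $\lambda\downarrow\sigma_{\dom(g^*)}(x)$ gives $g^\infty(x)\le\sigma_{\dom(g^*)}(x)$.

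Identity \ref{eq:recfun1} is the classical characterization of coercivity via the recession function, which one may simply invoke from \cite{rockafellar1997convex}; alternatively it follows directly. If $g^\infty(v)\le0$ for some $v\neq0$, then by monotonicity in $\alpha$ of the difference quotients of the convex function $g$ we get $g(d+\alpha v)\le g(d)+\alpha g^\infty(v)\le g(d)$ for all $\alpha\ge0$, so the sublevel set $\brac{g\le g(d)}$ contains an unbounded ray and $g$ is not coercive. Conversely, if $g$ is not coercive then some nonempty sublevel set $\brac{g\le\lambda}$ is unbounded; choosing $x_k$ in it with $\norm{x_k}{}\to\pinfty$ and (up to a subsequence) $x_k/\norm{x_k}{}\to v$, $\norm{v}{}=1$, and convex-combining $d$ with $x_k$ using the weight $\alpha/\norm{x_k-d}{}\in]0,1[$ for $k$ large, the lower semicontinuity of $g$ together with convexity gives $g(d+\alpha v)\le g(d)$ for every $\alpha>0$, i.e.\ $g^\infty(v)\le0$ with $v\neq0$. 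This proves \ref{eq:recfun1}.

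Finally, the ``in particular'' statement follows by applying \ref{eq:recfun1} to $g\circ A\in\Gamma_0(\R^m)$ and then \ref{eq:recfun3} and \ref{eq:recfun2}: $g\circ A$ is coercive $\iff(g\circ A)^\infty(x)>0$ for all $x\neq0\iff g^\infty(Ax)>0$ for all $x\neq0\iff\sigma_{\dom(g^*)}(Ax)>0$ for all $x\neq0$. The only slightly delicate point is the second implication in \ref{eq:recfun1} (unboundedness of a sublevel set $\Rightarrow$ existence of a nonzero recession direction along which $g^\infty\le0$), which requires lower semicontinuity of $g$ and the careful choice of convex weights; once one accepts Fenchel--Moreau and the base-point independence of the recession function, the proofs of \ref{eq:recfun2} and \ref{eq:recfun3} are short.
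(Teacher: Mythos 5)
Your proposal is correct, but it takes a genuinely different route from the paper: the paper does not prove Proposition~\ref{prop:recfun} at all, it simply cites \cite[Theorems~3.26 and~11.5]{rockafellar1998variational} for \ref{eq:recfun1}--\ref{eq:recfun2} and \cite[Corollary~3.2]{Laghdir1999} for \ref{eq:recfun3}, whereas you supply complete elementary arguments. Your proofs check out: \ref{eq:recfun3} is indeed immediate from the base-point independence of the recession function once $g\circ A$ is proper (the cited reference of Laghdir--Volle is needed only in more general settings, e.g.\ infinite dimensions or when one must take closures); your two-sided estimate for \ref{eq:recfun2} via Fenchel--Young for the lower bound and Fenchel--Moreau plus the bound $\langle y,d\rangle-g^*(y)\le g(d)$ for the upper bound is exactly the standard derivation of $g^\infty=\sigma_{\dom(g^*)}$; and your proof of \ref{eq:recfun1} correctly uses the monotonicity of the difference quotients in one direction and, in the other, the compactness of the sphere, lower semicontinuity, and the vanishing convex weights $\alpha/\|x_k-d\|$ -- this is the one step that genuinely requires $g\in\Gamma_0(\R^n)$ and finite dimension, and you flag it appropriately. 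The trade-off is the usual one: your version makes the proposition self-contained at the cost of about a page, while the paper outsources it to standard texts; your explicit remark that $g\circ A$ must be proper for the final equivalence is a hypothesis the paper leaves implicit, and making it explicit is a small improvement.
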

\begin{proof}
The proofs can be found in \cite[Theorem 3.26]{rockafellar1998variational}, \cite[Theorem 11.5]{rockafellar1998variational} and \cite[Corollary 3.2]{Laghdir1999} respectively.
\end{proof}

\paragraph{Real sequences}
We close this section with some definitions and lemmas for real sequences that will be used to prove the convergence properties of the algorithm. We denote $\ell_+$ as the set of all sequences in $[0,+\infty[$. Given $p\in[1,\pinfty[$, $\ell^p$ is the space of real sequences $\seq{r_k}$ such that $\para{\sum\limits_{k=1}^\infty |r_k|^p}^{1/p} < \pinfty$. For $p=\pinfty$, we denote by $\ell^{\infty}$ the space of bounded sequences. Furthermore, we will use the notation $\ell^p_+ \eqdef \ell^p \cap \ell_+$. 
In the next, we recall some key results about real sequences.
\begin{lemma}{(\cite[Lemma 3.1]{combettesquasi})}\label{pre-Alb}
Consider three sequences $\seq{r_k} \in \ell_+$, $\seq{a_k} \in \ell_+$, and $\seq{z_k} \in \ell_+^1$, such that  
\[
r_{k+1} \leq r_k - a_k + z_k, \quad \forall k\in\N .
\] 
Then $\seq{r_k}$ is convergent and $\seq{a_k}\in\ell^1_+$.
\end{lemma}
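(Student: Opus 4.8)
The plan is to prove the two conclusions separately, starting with the summability of $\seq{a_k}$, which then feeds into the argument for convergence of $\seq{r_k}$.

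First I would sum the recursion telescopically. Fixing $N \in \N$ and adding the inequalities $r_{k+1} \leq r_k - a_k + z_k$ for $k = 1, \dots, N$ yields
\[
r_{N+1} \leq r_1 - \sum_{k=1}^{N} a_k + \sum_{k=1}^{N} z_k.
\]
Since $r_{N+1} \geq 0$ (as $\seq{r_k} \in \ell_+$) and $\seq{z_k} \in \ell^1_+$, rearranging gives $\sum_{k=1}^{N} a_k \leq r_1 + \sum_{k=1}^{\infty} z_k < \pinfty$. As this bound is uniform in $N$ and the partial sums are non-decreasing (because $a_k \geq 0$), I conclude $\seq{a_k} \in \ell^1_+$.

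Second, to obtain convergence of $\seq{r_k}$, I would introduce the auxiliary sequence $u_k \eqdef r_k + \sum_{j \geq k} z_j$, which is well-defined and non-negative since $\seq{z_k} \in \ell^1_+$. Using the recursion,
\[
u_{k+1} = r_{k+1} + \sum_{j \geq k+1} z_j \leq r_k - a_k + z_k + \sum_{j \geq k+1} z_j = u_k - a_k \leq u_k,
\]
so $\seq{u_k}$ is non-increasing and bounded below by $0$; hence it converges to some $\ell \geq 0$. Finally, since $\seq{z_k} \in \ell^1_+$ implies $\sum_{j \geq k} z_j \to 0$ as $k \to \infty$, I get $r_k = u_k - \sum_{j \geq k} z_j \to \ell$, which proves convergence of $\seq{r_k}$.

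There is essentially no serious obstacle here; the only ``trick'' is choosing the shifted sequence $u_k$ so that the perturbation $z_k$ is absorbed into a monotone quantity, after which the monotone convergence theorem for real sequences does the rest. One should just be a little careful that all tail series converge (guaranteed by $\seq{z_k} \in \ell^1_+$) so that $u_k$ and the manipulations above are legitimate.
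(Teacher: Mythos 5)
Your proof is correct. The paper itself does not prove this lemma---it simply cites \cite[Lemma~3.1]{combettesquasi}---and your argument (telescoping to get summability of $\seq{a_k}$, then absorbing the perturbation into the monotone auxiliary sequence $u_k = r_k + \sum_{j\geq k} z_j$) is precisely the standard proof of this quasi-Fej\'er-type result found in that reference.
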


\begin{lemma}{(\cite[Theorem 2]{Alber} and \cite[Proposition 2(ii)]{Alber})}\label{Alber1}
Consider two sequences $\seq{p_k} \in \ell_+$ and $\seq{w_k} \in \ell_+$ such that $\seq{p_k w_k}\in\ell^1_+$ and $\seq{p_k}\notin\ell^1$. Then the following holds:
\begin{enumerate}[label=(\roman*)]
\item there exists a subsequence $\subseq{w_{k_j}}$ such that
\[
w_{k_j}\leq P_{k_j}^{-1},
\]
where $P_{n} = \sum_{k=1}^n p_k$. In particular, $\liminf\limits_{k} w_k=0$.
\item If moreover there exists a constant $\alpha>0$ such that $w_k-w_{k+1} \leq \alpha p_k$ for every $k\in\N$, then 
\[
\lim\limits_{k} w_k=0 .
\]
\end{enumerate}
\end{lemma}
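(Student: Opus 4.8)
The plan is to prove the two assertions separately; only the second uses the extra hypothesis $w_k - w_{k+1}\le \alpha p_k$.

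\emph{First assertion.} I would argue by contradiction: if no subsequence with $w_{k_j}\le P_{k_j}^{-1}$ exists, then only finitely many indices satisfy $w_k\le P_k^{-1}$, so there is $N$ (large enough that $P_N>0$, which is possible since $\seq{p_k}\notin\ell^1$) with $w_k>P_k^{-1}$ for all $k\ge N$; multiplying by $p_k\ge0$ yields $p_kw_k\ge p_k/P_k$ for $k\ge N$. It then suffices to show $\sum_{k\ge N}p_k/P_k=+\infty$, which contradicts $\seq{p_kw_k}\in\ell^1_+$. This is the classical fact that the increments of a divergent non-negative series, divided by its partial sums, still form a divergent series: for $m>n\ge N$,
\[
\sum_{k=n+1}^{m}\frac{p_k}{P_k}\ \ge\ \frac{1}{P_m}\sum_{k=n+1}^{m}p_k\ =\ 1-\frac{P_n}{P_m},
\]
and since $P_m\to+\infty$ (as $\seq{p_k}\notin\ell^1$ and $p_k\ge0$) this exceeds $1/2$ for $m$ large, so the partial sums of $\sum p_k/P_k$ are not Cauchy. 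For the ``in particular'' claim, $k_j\to\infty$ forces $P_{k_j}\to+\infty$, hence $0\le w_{k_j}\le P_{k_j}^{-1}\to 0$, and as $w_k\ge0$ we conclude $\liminf_k w_k=0$.

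\emph{Second assertion.} I would not route through the first part but instead establish the pointwise estimate $w_n^2\le 4\alpha\sum_{k\ge n}p_kw_k$ for every $n$. Telescoping $w_{j}\ge w_{j-1}-\alpha p_{j-1}$ gives $w_j\ge w_n-\alpha\sum_{k=n}^{j-1}p_k$ for all $j\ge n$. Fix $n$ with $v\eqdef w_n>0$ (the case $w_n=0$ is trivial); since $\sum_{k\ge n}p_k=+\infty$, let $m\ge n$ be the least index with $\sum_{k=n}^{m}p_k\ge v/(2\alpha)$. Minimality forces $\sum_{k=n}^{m-1}p_k<v/(2\alpha)$, so the telescoped bound gives $w_j\ge v-\alpha\cdot v/(2\alpha)=v/2$ for every $n\le j\le m$. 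Hence
\[
\sum_{k\ge n}p_kw_k\ \ge\ \sum_{k=n}^{m}p_kw_k\ \ge\ \frac{v}{2}\sum_{k=n}^{m}p_k\ \ge\ \frac{v}{2}\cdot\frac{v}{2\alpha}\ =\ \frac{w_n^2}{4\alpha}.
\]
Since $\seq{p_kw_k}\in\ell^1_+$, the tails $\sum_{k\ge n}p_kw_k\to0$ as $n\to\infty$, and the estimate forces $w_n\to0$.

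\emph{Main obstacle.} The telescoping and passing to tails are routine; the delicate points are, in the first part, isolating and proving the divergence of $\sum p_k/P_k$ via the Cauchy-criterion trick above (an integral comparison would additionally require the $p_k$ to be small), and, in the second part, the choice of the window $[n,m]$ together with extracting $w_j\ge v/2$ from the minimality of $m$: one must handle the degenerate case $m=n$ (empty sum) and the possibility that a single $p_k$ is large, which is precisely why $m$ is defined as the first time the cumulative $p$-mass reaches $v/(2\alpha)$ rather than by a fixed number of steps. I expect this bookkeeping to be the only real (and minor) obstacle.
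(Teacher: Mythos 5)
Your proof is correct. Note that the paper does not actually prove this lemma — it is quoted directly from Alber, Iusem and Solodov (\cite[Theorem 2]{Alber} and \cite[Proposition 2(ii)]{Alber}) — so there is no in-paper argument to compare against; what you have written is a valid self-contained derivation along the lines of the cited reference. Part (i) is the Abel–Dini observation: under the negation, $p_kw_k\geq p_k/P_k$ eventually, and your Cauchy-criterion estimate $\sum_{k=n+1}^{m}p_k/P_k\geq 1-P_n/P_m\to 1$ correctly shows $\sum p_k/P_k$ diverges (the only housekeeping is that $P_k>0$ eventually, which follows from $\seq{p_k}\notin\ell^1$). Part (ii) is the standard "window" argument: the stopping index $m$ defined as the first time the cumulative $p$-mass from $n$ reaches $w_n/(2\alpha)$ keeps $w_j\geq w_n/2$ on $[n,m]$ by the telescoped slow-decrease bound, yielding $w_n^2\leq 4\alpha\sum_{k\geq n}p_kw_k$, and the summability of $\seq{p_kw_k}$ kills the tail. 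You correctly flag and handle the degenerate cases ($w_n=0$, $m=n$, a single large $p_k$). No gaps.
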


\begin{lemma}{}\label{Alber2}
Consider the sequences $\seq{r_k} \in \ell_+$, $\seq{p_k} \in \ell_+$, $\seq{w_k} \in \ell_+$, and $\seq{z_k} \in \ell_+$. Suppose that $\seq{z_k}\in\ell^1_+$, $\seq{p_k}\notin\ell^1$, and that, for some $\alpha>0$, the following inequalities are satisfied for every $k\in\N$:
\begin{equation}\label{eq:Alber2}
\begin{split}
& r_{k+1} \leq r_k - p_k w_k + z_k;\\
& w_k-w_{k+1}\leq \alpha p_k.
\end{split}
\end{equation}
Then,
\begin{enumerate}[label=(\roman*)]
\item $\seq{r_k}$ is convergent and $\seq{p_k w_k}\in\ell_+^1$. \label{alber2rk}
\item $\lim\limits_{k} w_k=0$.\label{alber2wk}
\item For every $k\in\N$, $\inf_{1 \leq i \leq k}w_{i}\leq \pa{r_0+E}/P_{k}$, where, again, $P_{n} = \sum_{k=1}^n p_k$ and $E=\sum_{k=1}^{\pinfty} z_k$.\label{alber2wkrateinf}
\item There exists a subsequence $\subseq{w_{k_j}}$ such that, for all $j\in\N$, $w_{k_j}\leq P_{k_j}^{-1}$.\label{alber2wkrate}
\end{enumerate}
\end{lemma}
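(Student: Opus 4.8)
The plan is to obtain Lemma~\ref{Alber2} by assembling the two preceding lemmas on real sequences: the system \eqref{eq:Alber2} has been set up precisely so that each of Lemmas~\ref{pre-Alb} and~\ref{Alber1} applies almost verbatim, so no new idea is needed, only care with the hypotheses and with the bookkeeping in the quantitative part. For claim~\ref{alber2rk} I would set $a_k \eqdef p_k w_k$, which lies in $\ell_+$ since $\seq{p_k},\seq{w_k}\in\ell_+$; the first line of \eqref{eq:Alber2} is then exactly the recursion $r_{k+1}\leq r_k-a_k+z_k$ with $\seq{z_k}\in\ell^1_+$, so Lemma~\ref{pre-Alb} gives at once that $\seq{r_k}$ converges and that $\seq{p_k w_k}=\seq{a_k}\in\ell^1_+$. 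Once this is known, $\seq{p_k w_k}\in\ell^1_+$ and $\seq{p_k}\notin\ell^1$ are precisely the standing hypotheses of Lemma~\ref{Alber1}, applied with the pair $\seq{p_k},\seq{w_k}$: part~(i) of that lemma immediately yields a subsequence $\subseq{w_{k_j}}$ with $w_{k_j}\leq P_{k_j}^{-1}$, which is claim~\ref{alber2wkrate}; and the second line of \eqref{eq:Alber2}, namely $w_k-w_{k+1}\leq\alpha p_k$, is exactly the additional hypothesis required by Lemma~\ref{Alber1}(ii), whence $\lim_k w_k=0$, i.e.\ claim~\ref{alber2wk}.

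For the rate bound~\ref{alber2wkrateinf} I would argue directly from the recursion. Summing the first inequality of \eqref{eq:Alber2} over $i=1,\dots,k$ and telescoping gives $r_{k+1}\leq r_0-\sum_{i=1}^k p_i w_i+\sum_{i=1}^k z_i\leq r_0-\sum_{i=1}^k p_i w_i+E$, so that $r_{k+1}\geq 0$ yields $\sum_{i=1}^k p_i w_i\leq r_0+E$. Bounding the left-hand side from below by $\pa{\inf_{1\leq i\leq k}w_i}\sum_{i=1}^k p_i=P_k\inf_{1\leq i\leq k}w_i$ and dividing by $P_k$ (positive for $k$ large, as $\seq{p_k}\notin\ell^1$) then gives $\inf_{1\leq i\leq k}w_i\leq\pa{r_0+E}/P_k$, which is the claim.

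I do not expect a real obstacle here, since this is a repackaging lemma; the two points to watch are (a) checking that every sequence handed to Lemmas~\ref{pre-Alb} and~\ref{Alber1} is genuinely in $\ell_+$ and that the summability/non-summability conditions are met (they are, by hypothesis, once $\seq{p_k w_k}\in\ell^1_+$ is established in the first step), and (b) being consistent with the summation indices and with where $\N$ starts in claim~\ref{alber2wkrateinf}, so that the constant comes out as exactly $r_0+E$ rather than, say, $r_1+E$; dropping the resulting nonnegative terms settles this.
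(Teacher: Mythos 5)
Your proof is correct and follows essentially the same route as the paper: part (i) via Lemma~\ref{pre-Alb} with $a_k=p_kw_k$, parts (ii) and (iv) by feeding the resulting summability into Lemma~\ref{Alber1}(ii) and (i) respectively, and part (iii) by telescoping the first inequality and bounding $\sum_i p_i w_i$ below by $P_k\inf_i w_i$. The paper's proof is just a terser version of the same assembly, so there is nothing to add.
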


\begin{proof}
\begin{enumerate}[label=(\roman*)]
\item See \lemref{pre-Alb}. 
\item Claim~\ref{alber2wk} follows by combining~\ref{alber2rk} and \lemref{Alber1}(ii).
\item Sum \eqref{eq:Alber2} using a telescoping property and summability of $\seq{z_k}$.
\item Claim~\ref{alber2wkrate} follows by combining~\ref{alber2rk} and \lemref{Alber1}(i).
\end{enumerate}
\end{proof}

Notice that the conclusions of \lemref{Alber2} remain true if non-negativity of the sequence $\seq{r_k}$ is replaced with the assumption that it is bounded from below by a trivial translation argument. Observe also that \lemref{Alber2} guarantees the convergence of the whole sequence to zero, but it gives a convergence rate only on a subsequence.

\section{Algorithm and assumptions}\label{sec:alg}

\begin{subsection}{Algorithm}
As described in the introduction, we combine penalization with the augmented Lagrangian approach to form the following functional
\nnewq{
\mc{J}_k\para{x,y,\mu} = f\para{x} + g\para{y} + h\para{x} + \ip{\mu, Ax-b}{}+\frac{\rho_k}{2}\norm{Ax-b}{}^2 + \frac{1}{2\beta_k}\norm{y-Tx}{}^2 ,
}
where $\mu$ is the dual multiplier, and $\rho_k$ and $\beta_k$ are non-negative parameters.
The steps of our scheme, then, are summarized in Algorithm~\ref{alg:CGAL}.
\begin{algorithm}[h]
    \SetAlgoLined
    \KwIn{$x_0\in \C = \dom\para{h}$; $\mu_0 \in \ran(A)$; $\seq{\gamma_k}$, $\seq{\beta_k}$, $\seq{\theta_k}, \seq{\rho_k} \in \ell_+$.}
    $k = 0$\\
    \Repeat{convergence}{
    $y_{k} =  \prox_{\beta_k g}\para{T x_k}$\\
    \vspace{0.25cm}
    $z_{k} = \nabla f(x_k)+T^*\left(Tx_k-y_k\right)/\beta_k+A^*\mu_k+\rho_kA^*\left(Ax_k-b\right)$\\
    \vspace{0.2cm}
    $s_{k} \in \Argmin_{s\in \HH_p}\brac{h\para{s} + \ip{z_k,s}{}}$\\
    \vspace{0.2cm}
    $x_{k+1} = x_k -\gamma_k\para{x_k-s_k}$\\
    \vspace{0.2cm}
    $\mu_{k+1} = \mu_k + \theta_k\para{Ax_{k+1}-b}$\\
    \vspace{0.2cm}
    $k \leftarrow k+1$\\
    \vspace{0.2cm}
    }
    \KwOut{$x_{k+1}$.}
\caption{Conditional Gradient with Augmented Lagrangian and Proximal-step (\cgal)}
\label{alg:CGAL}
\end{algorithm}
\ \\
For the interpretation of the algorithm, notice that the first step is equivalent to 
\[
\ens{y_k} = \Argmin_{y\in\HH_v}\mc{J}_k\para{x_k,y,\mu_k}.
\]
Now define the functional $\mc{E}_k\para{x,\mu} \eqdef  f\para{x} +g^{\beta_k}\para{T x}+\ip{\mu, Ax-b}{}+\frac{\rho_k}{2}\norm{Ax-b}{}^2.$ By convexity of the set $\C$ and the definition of $x_{k+1}$ as a convex combination of $x_k$ and $s_k$, the sequence $\seq{x_k}$ remains in $\C$ for all $k$, although the affine constraint $Ax_k=b$ might only be satisfied asymptotically. It is an augmented Lagrangian, where we do not consider the non-differentiable function $h$ and we replace $g$ by its Moreau envelope. Notice that
\begin{equation}\label{gradE}
\begin{split}
\nabla_x\Ek{x,\mu_k} & =\nabla f(x)+T^*[\nabla g^{\beta_k}](Tx)+A^*\mu_k+\rho_kA^*\left(Ax-b\right)\\
& = \nabla f(x)+\frac{1}{\beta_k}T^*\left(Tx-\prox_{\beta_k g}\para{T x}\right)+A^*\mu_k+\rho_kA^*\left(Ax-b\right).
\end{split}
\end{equation}
where in the second equality we used \ref{prop:moreau}\ref{moreauclaim2}. Then $z_k$ is just $\nabla_x \mc{E}_{k}\para{x_k,\mu_k}$ and the first three steps of the algorithm can be condensed in
\begin{equation}\label{sk}
s_{k} \in \Argmin\limits_{s\in \HH_p}\brac{h\para{s} + \ip{\nabla_x \mc{E}_{k}\para{x_k,\mu_k},s}{}}.
\end{equation}
Thus the primal variable update of each step of our algorithm boils down to conditional gradient applied to the function $\mc{E}_{k}\para{\cdot,\mu_k}$, where the next iterate is a convex combination between the previous one and the new direction $s_k$. A standard update of the Lagrange multiplier $\mu_k$ follows.
\end{subsection}

\subsection{Assumptions}
\subsubsection{Assumptions on the functions}
In order to help the reading, we recall in a compact form the following notation that we will use to refer to various functionals throughout the paper:
\nnewq{\label{notation}
	\Phi\para{x} &\eqdef  f\para{x} + g\para{Tx} + h\para{x};\\
	\Phi_k\para{x} &\eqdef  f\para{x} + g^{\beta_k}\para{Tx} + h\para{x} + \frac{\rho_k}{2}\norm{Ax-b}{}^2;\\
	\bar{\Phi}\para{x} &\eqdef \Phi\para{x}+\para{\rhosup/2}\norm{Ax-b}{}^2;\\
	\bar{\varphi}(\mu) &\eqdef \bar{\Phi}^{*}\left(-A^*\mu\right)+\scal{b}{\mu};\\
	\mc{L}\para{x,\mu} &\eqdef  f\para{x} + g\para{T x} + h\para{x} + \ip{\mu, Ax-b}{};\\
	\mc{L}_k\para{x,\mu} &\eqdef  f\para{x} + g^{\beta_k}\para{T x} + h\para{x} + \ip{\mu, Ax-b}{} + \frac{\rho_k}{2}\norm{Ax-b}{}^2;\\
	\mc{E}_k\para{x,\mu} &\eqdef  f\para{x} +g^{\beta_k}\para{T x}+\ip{\mu, Ax-b}{}+\frac{\rho_k}{2}\norm{Ax-b}{}^2 ,
}
where $\rhosup$ is defined in Assumption~\ref{ass:rhodec} to be $\rhosup = \sup\limits_k\rho_k$. 

In the list~\eqref{notation}, we can recognize $\Phi$ as the objective, $\Phi_k$ as the smoothed objective augmented with a quadratic penalization of the constraint, and $\mc{L}_k$ as a smoothed augmented Lagrangian. $\mc{L}$ denotes the classical Lagrangian. Recall that $\para{\xs,\mus}\in\HH_p\times\HH_d$ is a saddle-point for the Lagrangian $\mc{L}$ if for every $\para{x,\mu}\in\HH_p\times\HH_d$,
\begin{equation}\label{saddle_point}
\LL{\xs,\mu}\leq\LL{\xs,\mus}\leq\LL{x,\mus}.
\end{equation}
It is well-known from standard Lagrange duality, see e.g.~\cite[Proposition~19.19]{BAUSCHCOMB} or \cite[Theorem~3.68]{PEYPOU}, that the existence of a saddle point $\para{\xs,\mus}$ ensures strong duality, that $\xs$ solves \eqref{PProb} and $\mus$ solves the dual problem,
\begin{equation}\label{DProb}\tag{$\mathrsfs{D}$}
\min\limits_{\mu\in\HH_d} \pa{f + g \circ T + h}^*(-A^*\mu) + \ip{\mu,b}{} .
\end{equation}

The following assumptions on the problem will be used throughout the convergence analysis (for some results only a subset of these assumptions will be needed):\\
\begin{enumerate}[label=(A.\arabic*)]
	\item \label{ass:A1} $f,g\circ T$, and $h$ belong to $\Gamma_0\para{\mc{H}_p}$.
	\item \label{ass:f} The pair $\left(f,\C\right)$ is $\left(F,\zeta\right)$-smooth (see Definition \ref{def_smooth}), where we recall $\C\eqdef \dom\para{h}$.
	\item \label{ass:compact} $\C$ is weakly compact (and thus contained in a ball of radius $\Ccon>0$).
	\item \label{ass:interior}$T\C \subset \dom(\p g)$ and $\sup\limits_{x\in \C}\norm{\sbrac{\p g\para{Tx}}^0}{} < \infty$.
	\item \label{ass:lip} $h$ is Lipschitz continuous relative to its domain $\C$ with constant $L_h \geq 0$, i.e., $\allowbreak \forall (x,z) \in \C^2$, $|h(x)-h(z)| \leq L_h \norm{x-z}{}$. 
	\item \label{ass:existence} There exists a saddle-point $\para{\xs,\mus}\in\HH_p\times\HH_d$ for the Lagrangian $\mc{L}$.
	\item \label{ass:rangeA} $\ran(A)$ is closed.
	\item \label{ass:coercive} One of the following holds:
	\begin{enumerate}[label=(\alph*)]
	\item \label{ass:coerciveinfdim} $A^{-1}\para{b}\cap\inte\para{\dom\para{g\circ T}}\cap\inte\para{\C} \neq \emptyset$, where $A^{-1}\para{b}$ is the pre-image of $b$ under $A$.
	\item \label{ass:coercivefindim} $\HH_p$ and $\HH_d$ are finite dimensional and
	\nnewq{\label{eq:findimass}
	\begin{cases}
	A^{-1}\para{b} \cap \ri\para{\dom \para{g\circ T}} \cap \ri\para{\C}\neq\emptyset \\
	\qandq \\
	\ran\para{A^*}\cap \LinHull\para{\dom \para{g\circ T} \cap \C}^\bot = \brac{0} .
	\end{cases}
	}
	\end{enumerate}
\end{enumerate}

At this stage, a few remarks are in order.
\begin{remark}\label{rem:assfun}
{$~$}\\\vspace*{-0.5cm}
\begin{enumerate}[label=(\roman*)]
\item By Assumption~\ref{ass:A1}, $\C$ is also closed and convex. This together with Assumption \ref{ass:compact} entail, upon using \cite[Lemma~3.29 and Theorem~3.32]{BAUSCHCOMB}, that $\C$ is weakly compact.
\item Since the sequence of iterates $\seq{x_k}$ generated by Algorithm~\ref{alg:CGAL} is guaranteed to belong to $\C$ under \ref{ass:P1}, we have from \ref{ass:interior}
\nnewq{\label{MMM}
	\sup\limits_{k}\norm{\sbrac{\p g\para{Tx_k}}^0}{}\leq \gcon.
}
where $\gcon$ is a positive constant.
\item Assumption~\ref{ass:lip} will only be needed in the proof of convergence to optimality (Theorem~\ref{convergence}). It is not needed to show asymptotic feasibility (Theorem~\ref{thm:feas}).
\item \label{rem:assfunproper}
Assume that $A^{-1}(b) \cap \dom(g \circ T) \cap \C \neq \emptyset$, which entails that the set of minimizers of \eqref{PProb} is a non-empty convex closed bounded set under~\ref{ass:A1}-\ref{ass:compact}. Then there are various domain qualification conditions, e.g., one of the conditions in \cite[Proposition~15.24 and Fact~15.25]{BAUSCHCOMB}, that ensure the existence of a saddle-point for the Lagrangian $\mc{L}$ (see \cite[Theorem~19.1 and Proposition~9.19(v)]{BAUSCHCOMB}).
\item Observe that under the inclusion assumption of \lemref{lem:interior}, \ref{ass:coercive}\ref{ass:coerciveinfdim} is equivalent to $A^{-1}\para{b}\cap\inte\para{\C} \neq \emptyset$.
\item Assumption~\ref{ass:coercive} will be crucial to show that $\bar\varphi$ is coercive on $\ker(A^*)^\bot=\ran(A)$ (the last equality follows from \ref{ass:rangeA}), and hence boundedness of the dual multiplier sequence $\seq{\mu_k}$ provided by Algorithm~\ref{alg:CGAL} (see~\lemref{lem:coercive} and \lemref{bounded}). 
\end{enumerate}
\end{remark}

The uniform boundedness of the minimal norm selection on $\C$, as required in Assumption~\ref{ass:interior}, is important when we will invoke Proposition~\ref{prop:moreau}\ref{moreauclaim4} in our proofs to get meaningful estimates. The following result gives some sufficient conditions under which \ref{ass:interior} holds (in fact an even stronger claim).
\begin{lemma}{}\label{lem:interior}
Let $\C$ be a nonempty bounded subset of $\mc{H}_p$, $g\in\Gamma_0\para{\HH_v}$ and $T: \HH_p \to \HH_v$ be a bounded linear operator. Suppose that $T\C \subset \inte\para{\dom\para{g}}$. Then the assumption \ref{ass:interior} holds.
\end{lemma}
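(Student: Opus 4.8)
The plan is to verify the two requirements of Assumption~\ref{ass:interior}: the inclusion $T\C\subset\dom\para{\p g}$, and the uniform bound $\sup_{x\in\C}\norm{\sbrac{\p g\para{Tx}}^0}{}<\pinfty$. The inclusion is the easy half: a function in $\Gamma_0\para{\HH_v}$ is continuous --- indeed locally Lipschitz --- on the norm-interior of its effective domain, and a convex function carries a subgradient at every point at which it is continuous; hence $\inte\para{\dom\para{g}}\subset\dom\para{\p g}$ (see, e.g., \cite{BAUSCHCOMB}), and composing with the hypothesis $T\C\subset\inte\para{\dom\para{g}}$ yields $T\C\subset\dom\para{\p g}$. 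This is precisely why \ref{ass:interior} is phrased with $\dom\para{\p g}$: an interior point of the effective domain automatically lies in the domain of the subdifferential.

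For the uniform bound I would lean on the elementary estimate that if $g$ is bounded below by $m$ and above by $M$ on a closed ball $\overline B\para{y,r}\subset\inte\para{\dom\para{g}}$, then every $u\in\p g\para{y}$ satisfies $\norm{u}{}\leq\para{M-m}/r$ --- obtained by testing the subgradient inequality $g\para{z}\geq g\para{y}+\ip{u,z-y}{}$ at $z=y+ru/\norm{u}{}$ (the case $u=0$ is trivial). It therefore suffices to produce a single radius $r>0$ such that the tube $\bigcup_{x\in\C}\overline B\para{Tx,r}$ lies in $\inte\para{\dom\para{g}}$ and $g$ is bounded on it; then $\sup_{x\in\C}\norm{\sbrac{\p g\para{Tx}}^0}{}\leq\para{M-m}/r<\pinfty$. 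To build such an $r$: $\C$ bounded and $T$ bounded linear make $T\C$ bounded; provided $\overline{T\C}$ is moreover compact and contained in the open set $\inte\para{\dom\para{g}}$, it has positive distance $2r$ to $\HH_v\setminus\inte\para{\dom\para{g}}$, which furnishes $r$; and $g$, being locally bounded on $\inte\para{\dom\para{g}}$, is bounded on the compact tube $\bigcup_{x\in\C}\overline B\para{Tx,r}$ by covering it with finitely many balls on which $g$ is bounded.

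The one delicate point --- and the main obstacle --- is the compactness of $\overline{T\C}$ just invoked: from $\C$ merely bounded one cannot in general conclude it, and without it the conclusion can genuinely fail (a barrier-type $g$ has subgradients blowing up along a bounded path approaching the boundary of $\dom\para{g}$). The real requirement is norm-compactness of $\overline{T\C}$ --- available, for instance, in finite dimensions once $\C$ is also closed, hence in the paper's main setting where $\C=\dom\para{h}$ is (weakly) compact; that is the structural fact on which the argument really leans. Everything else --- extracting the finite subcover and keeping track of $r$, $M$, $m$ --- is routine bookkeeping.
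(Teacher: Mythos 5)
Your argument follows the same skeleton as the paper's: the inclusion $T\C\subset\dom\para{\p g}$ comes from $\inte\para{\dom\para{g}}\subset\dom\para{\p g}$ (the paper cites \cite[Proposition~16.21]{BAUSCHCOMB}), and the uniform bound comes from local boundedness of the subdifferential on $\inte\para{\dom\para{g}}$ combined with a finite-cover argument over $T\C$. The only real difference is cosmetic: where the paper invokes \cite[Corollary~8.30(ii) and Proposition~16.14]{BAUSCHCOMB} to get, for each $z\in T\C$, a neighborhood $U_z$ with $\p g\para{U_z}$ bounded, you rederive the local bound by hand via the $\para{M-m}/r$ estimate from the subgradient inequality. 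That estimate is correct and self-contained, and the lower bound $m$ is indeed available on any bounded set since $g\in\Gamma_0\para{\HH_v}$ admits a continuous affine minorant.

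The ``delicate point'' you flag is genuine, and it is worth telling you that the paper's own proof does not resolve it: the paper extracts a finite subcover of $T\C$ from the cover $\para{U_z}$ on the sole grounds that $T\C$ is \emph{bounded}, which is exactly the step you (rightly) refuse to make in infinite dimensions. Your barrier-type counterexample scheme is correct: e.g.\ $g\para{x}=-\log\para{1-\norm{x}{}^2}$ on $\ell^2$ with $T=\mathrm{Id}$ and $\C=\brac{\para{1-1/n}e_n}_n\cup\brac{0}$ is bounded (even weakly compact) with $T\C\subset\inte\para{\dom\para{g}}$, yet $\norm{\nabla g\para{\para{1-1/n}e_n}}{}\to\pinfty$. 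Note that this also shows your closing parenthetical is too optimistic: weak compactness of $\C$ does \emph{not} rescue the argument in infinite dimensions, since the $U_z$ are norm-open, not weakly open; one genuinely needs $\overline{T\C}$ norm-compact (finite dimension, or $T$ a compact operator). So your proposal is not a complete proof of the lemma as stated --- but neither is the paper's, and you have correctly isolated the missing hypothesis rather than papering over it.
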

\begin{proof}
Since $g \in \Gamma_0\para{\mc{H}_p}$, it follows from~\cite[Proposition~16.21]{BAUSCHCOMB} that
\[
T \C \subset \inte\para{\dom\para{g}} \subset \dom(\p g). 
\]
Moreover, by~\cite[Corollary~8.30(ii) and Proposition~16.14]{BAUSCHCOMB}, we have that $\p g$ is locally weakly compact on $\inte\para{\dom\para{g}}$. In particular, as we assume that $\C$ is bounded, so is $T\C$, and since $T\C \subset \inte\para{\dom\para{g}}$, it means that for each $z \in T\C$ there exists an open neighborhood of $z$, denoted by $U_z$, such that $\p g\para{U_z}$ is bounded. Since $\para{U_z}_{z\in \C}$ is an open cover of $T\C$ and $T\C$ is bounded, there exists a finite subcover $\para{U_{z_k}}_{k=1}^n$. Then,
\newq{\bigcup\limits_{x\in \C}\p g\para{T x} \subset \bigcup\limits_{k=1}^n \p g\para{U_{z_{k}}}.
}
Since the right-hand-side is bounded (as it is a finite union of bounded sets), 
\newq{
\sup\limits_{x \in \C, \ u\in \p g \para{T x}}\norm{u}{} < \pinfty ,
}
whence the desired conclusion trivially follows.
\end{proof}

\subsubsection{Assumptions on the parameters}
We also use the following assumptions on the parameters of \algref{alg:CGAL} (recall the function $\zeta$ in \defref{def_smooth}):
\begin{enumerate}[label=(P.\arabic*)]
	\item \label{ass:P1} $\seq{\gamma_k}\subset]0,1]$ and the sequences $\seq{\zeta\left(\gamma_k\right)}, \seq{\gamma_k^2/\beta_k}$ and $\seq{\gamma_k\beta_k}$ belong to $\ell^1_+$.
	\item\label{notell1} $\seq{\gamma_k} \notin \ell^1$.
	\item \label{ass:beta} $\seq{\beta_k} \in \ell_+$ is non-increasing and converges to $0$.
	\item \label{ass:rhodec}\label{ass:rhobound} $\seq{\rho_k} \in \ell_+$ is non-decreasing with $0 < \rhoinf = \inf_k\rho_k \leq \sup_k \rho_k = \rhosup<\pinfty$.
	\item \label{ass:gkgkp1} For some positive constants $\gamconinf$ and $\gamcon$, $\gamconinf \leq \inf_{k} \para{\gamma_k / \gamma_{k+1}} \leq \sup_{k} \para{\gamma_k / \gamma_{k+1}} \leq \gamcon$.
	\item \label{ass:thetagamma} $\seq{\theta_k}$ satisfies $\theta_k =  \frac{\gamma_k}{c}$ for all $k\in\N$ for some $c>0$ such that $\frac{\gamcon}{c}-\frac{\rhoinf}{2}< 0$.
	\item \label{ass:cond} $\seq{\gamma_k}$ and $\seq{\rho_k}$ satisfy $\rho_{k+1}-\rho_k-\gamma_{k+1}\rho_{k+1}+\frac{2}{c}\gamma_k-\frac{\gamma_k^2}{c}\leq \gamma_{k+1}$ for all $k\in\N$ and for $c$ in \ref{ass:thetagamma}.
\end{enumerate}

\begin{remark}\label{rem:param}
{$~$}\\\vspace*{-0.5cm}
\begin{enumerate}[label=(\roman*)]
\item One can recognize that the update of the dual multiplier $\mu_k$ in \algref{alg:CGAL} has a flavour of  gradient ascent applied to the augmented dual with step-size $\theta_k$. However, unlike the standard method of multipliers with the augmented Lagrangian, Assumption~\ref{ass:thetagamma} requires $\theta_k$ to vanish in our setting. The underlying reason is that our update can be seen as an inexact dual ascent (i.e., exactness stems from the conditional gradient-based update on $x_k$ which is not a minimization of over $x$ of the augmented Lagrangian $\mc{L}_k$). Thus $\theta_k$ must annihilate this error asymptotically.
\item A sufficient condition for \ref{ass:cond} to hold consists of taking $\rho_k \equiv \rho > 0$ and $\gamma_{k+1}\geq\frac{2}{c(1+\rho)}\gamma_k$.
In particular, if $\seq{\gamma_k}$ satisfies \ref{ass:gkgkp1}, then, for \ref{ass:cond} to hold, it is sufficient to take $\rho_k \equiv \rho > 2\gamcon/c$ as supposed in \ref{ass:thetagamma}.\label{rem:paramrho}
\item The relevance of having $\rho_k$ vary is that it allows for more general and less stringent choice of the step-size $\gamma_k$. It is, however, possible (and easier in practice), to simply pick $\rho_k\equiv \rho$ for all $k\in\N$ as described above.
\end{enumerate}
\end{remark}

There is a large class of sequences that fulfill the requirements \ref{ass:P1}-\ref{ass:cond}. A typical one is as follows.
\begin{example}\label{ex:seq}
Take\footnote{Of course, one can add a scaling factor in the choice of the parameters which would allow for more practical flexibility. But this does not change anything to our discussion nor to the bahviour of the \cgal algorithm for $k$ large enough.}, for $k \in \N$,
\newq{
\rho_k \equiv \rho > 0, \gamma_k=\frac{(\log (k+2))^a}{(k+1)^{1-b}}, \beta_k = \frac{1}{(k+1)^{1-\delta}}, \qwithq\\
a \geq 0, 0 \leq 2b < \delta < 1, \delta < 1-b, \rho > 2^{2-b}/c, c > 0 . 
}
In this case, one can take the crude bounds $\gamconinf=(\log(2)/\log(3))^a$ and $\gamcon=2^{1-b}$, and choose $\rho > 2\gamcon/c$ as devised in \remref{rem:param}\ref{rem:paramrho}. In turn, \ref{ass:rhodec}-\ref{ass:cond} hold. In addition, suppose that $f$ has a $\nu$-H\"older continuous gradient (see~\eqref{classicaldescent}). Thus for \ref{ass:P1}-\ref{notell1} to hold, simple algebra shows that the allowable choice of $b$ is in $\left[0,\min\Ppa{1/3,\frac{\nu}{1+\nu}}\right[$.
\end{example}

\section{Convergence analysis}\label{sec:convan}
\subsection{Main results}

We state here our main results.

\begin{theorem}[Asymptotic feasibility]\label{thm:feas}
Suppose that Assumptions~\ref{ass:A1}-\ref{ass:interior} and \ref{ass:existence} hold. Consider the sequence of iterates $\seq{x_k}$ from Algorithm~\ref{alg:CGAL} with parameters satisfying Assumptions~\ref{ass:P1}-\ref{ass:thetagamma}. Then,
\begin{enumerate}[label=(\roman*)]
\item $Ax_{k}$ converges strongly to $b$ as $k\to\infty$, i.e., the sequence $\seq{x_k}$ is asymptotically feasible for \eqref{PProb} in the strong topology.
\item Pointwise rate:
\begin{equation}\label{pointratefeas}
	\begin{split}
	\inf_{0 \leq i \leq k} \norrm{Ax_{i}-b} = O\para{\frac{1}{\sqrt{\Gamma_{k}}}} \qandq \text{$\exists$ a subsequence $\subseq{x_{k_j}}$ s.t. for all $j\in\N$,} \norrm{Ax_{k_j}-b} \leq \frac{1}{\sqrt{\Gamma_{k_j}}},
	\end{split}
	\end{equation}
where, for all $k\in\N$, $\Gamma_k \eqdef \sum_{i=0}^{k} \gamma_i$.
\item Ergodic rate: for each $k\in\N$, let $\avx_k \eqdef \sum_{i=0}^{k} \gamma_i x_i/\Gamma_k$. Then
\begin{equation}\label{ergratefeas}
	\begin{split}
	\norrm{A\avx_k-b} = O\para{\frac{1}{\sqrt{\Gamma_{k}}}} .
	\end{split}
	\end{equation}
\end{enumerate}
\end{theorem}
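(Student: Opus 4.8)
The plan is to derive a one-step recursion for a suitable Lyapunov-type quantity built out of the smoothed augmented Lagrangian $\mc{L}_k$ (or equivalently $\mc{E}_k$) evaluated along the iterates, and then to apply the real-sequence machinery of \lemref{Alber2}. First I would exploit the conditional gradient structure: since $x_{k+1}=x_k-\gamma_k(x_k-s_k)$ with $s_k$ solving the linearized oracle \eqref{sk}, and since the pair $(f,\C)$ is $(F,\zeta)$-smooth while the quadratic penalty and the Moreau envelope $g^{\beta_k}\circ T$ are smooth (the latter with $\|T\|^2/\beta_k$-Lipschitz gradient by \propref{prop:moreau}\ref{moreauclaim2}), I can invoke the descent property of \remref{remark} to bound $\mc{E}_k(x_{k+1},\mu_k)$ above by $\mc{E}_k(x_k,\mu_k)+\gamma_k\ip{\nabla_x\mc{E}_k(x_k,\mu_k),s_k-x_k}{}$ plus a curvature term of order $\zeta(\gamma_k)+\gamma_k^2/\beta_k$ (using $\diam\C\leq 2\Ccon$). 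The linear-oracle optimality of $s_k$ together with convexity lets me replace $s_k$ by the feasible point $\xs$ (which lies in $\C$ and satisfies $A\xs=b$), converting the inner product into a gap controlled by $\mc{E}_k(\xs,\mu_k)-\mc{E}_k(x_k,\mu_k)$ plus the $h$-terms handled via \ref{ass:lip}; here $g^{\beta_k}(Tx_k)\nearrow g(Tx_k)$ and the bounds in \propref{prop:moreau}\ref{moreauclaim4} with \eqref{MMM} convert the envelope into $g\circ T$ up to an error of order $\beta_k\gcon^2$.

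Next I would account for the dual update $\mu_{k+1}=\mu_k+\theta_k(Ax_{k+1}-b)$: expanding $\|Ax_{k+1}-b\|^2$ and using $\theta_k=\gamma_k/c$ produces, after combining with the penalty term $\tfrac{\rho_k}{2}\|Ax_{k+1}-b\|^2$, a term of the form $-(\tfrac{\rho_{k+1}}{2}-\tfrac{\text{const}}{c})\gamma_k\|Ax_{k+1}-b\|^2$ whose coefficient is strictly negative by \ref{ass:thetagamma}–\ref{ass:gkgkp1}; the discrepancy between using $\rho_k$ at step $k$ and $\rho_{k+1}$ at step $k+1$, plus the cross terms from $\ip{\mu,\cdot}{}$, is exactly what \ref{ass:cond} is engineered to absorb. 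At the saddle point, $\mc{L}(\xs,\mu_k)\leq\mc{L}(\xs,\mus)=\Phi(\xs)$ for all $\mu_k$, so the ``$\mc{E}_k(\xs,\mu_k)$'' contributions telescope against the value $\Phi(\xs)$ and drop out. The upshot is an inequality of the shape
\[
r_{k+1}\leq r_k-\gamma_k w_k+z_k,\qquad w_k=\tfrac12\|Ax_k-b\|^2 \ (\text{or at }x_{k+1}),
\]
where $r_k$ is $\mc{L}_k(x_k,\mu_k)$ shifted by $\Phi(\xs)$ (bounded below using \ref{ass:existence} and boundedness of $\seq{\mu_k}$, or one argues directly that $r_k$ is bounded below via coercivity of $\bar\varphi$ as in \remref{rem:assfun}(vi)), and $\seq{z_k}\in\ell^1_+$ collects the terms of order $\zeta(\gamma_k)$, $\gamma_k^2/\beta_k$, $\gamma_k\beta_k$ — summable precisely by \ref{ass:P1}. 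One also checks the second hypothesis of \lemref{Alber2}, $w_k-w_{k+1}\leq\alpha\gamma_k$, using that $\|Ax_{k+1}-x_k)\|\leq\gamma_k\|A\|\diam\C$ and that $\seq{Ax_k-b}$ is bounded.

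With this recursion, \lemref{Alber2}\ref{alber2wk} gives $w_k\to0$, i.e. $\|Ax_k-b\|\to0$, which is claim (i); \lemref{Alber2}\ref{alber2wkrateinf} gives $\inf_{i\leq k}w_i\leq(r_0+E)/\Gamma_k$, i.e. $\inf_{i\leq k}\|Ax_i-b\|=O(1/\sqrt{\Gamma_k})$, and \ref{alber2wkrate} yields the subsequence with $\|Ax_{k_j}-b\|\leq\Gamma_{k_j}^{-1/2}$, which is claim (ii). For the ergodic rate (iii), I would instead sum the recursion from $0$ to $k$ to get $\sum_{i=0}^k\gamma_i w_i\leq r_0+E$, then use Jensen/convexity of $\|A\cdot-b\|^2$: $\|A\avx_k-b\|^2\leq\sum_{i=0}^k\gamma_i\|Ax_i-b\|^2/\Gamma_k\leq 2(r_0+E)/\Gamma_k$, giving $\|A\avx_k-b\|=O(1/\sqrt{\Gamma_k})$. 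The main obstacle I anticipate is bookkeeping the coupled $\mu$- and $\rho$-dependent terms so that the net coefficient of $\gamma_k\|Ax_{k+1}-b\|^2$ stays uniformly negative while everything left over is genuinely $\ell^1$; this is the delicate balancing act that conditions \ref{ass:thetagamma} and \ref{ass:cond} encode, and getting the telescoping of the $\rho_{k+1}-\rho_k$ and the cross terms to line up correctly (rather than just asymptotically) is where care is needed. A secondary subtlety is ensuring $r_k$ is bounded below uniformly, which in the infinite-dimensional case leans on Assumption \ref{ass:coercive} via \lemref{lem:coercive}/\lemref{bounded} to control $\seq{\mu_k}$ — though for feasibility alone one can often sidestep full boundedness of $\seq{\mu_k}$ by keeping the $\ip{\mu_k,Ax_k-b}{}$ term inside $r_k$ and using the saddle-point inequality at $\xs$.
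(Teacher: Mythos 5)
Your skeleton at the end (a recursion $r_{k+1}\leq r_k-\gamma_k w_k+z_k$ fed into \lemref{Alber2}, the bound $w_k-w_{k+1}\leq\alpha\gamma_k$ from $\norrm{x_{k+1}-x_k}\leq\gamma_k\diam$, and Jensen for the ergodic rate) is exactly the paper's endgame. But the route you take to the recursion — linearizing at $x_k$, replacing $s_k$ by the feasible saddle point $\xs$, and absorbing the dual cross-terms via $\norrm{\mu_k-\mus}^2$ — is the paper's proof of \emph{optimality} (\lemref{estimate}), not of feasibility, and it does not prove \thmref{thm:feas} under the stated hypotheses. Three concrete problems. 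First, your argument invokes \ref{ass:lip} (to handle the $h$-terms after substituting $\xs$) and \ref{ass:cond} (to absorb the $\rho_{k+1}-\rho_k$ discrepancy), neither of which is assumed in \thmref{thm:feas}; the theorem only assumes \ref{ass:A1}--\ref{ass:interior}, \ref{ass:existence} and \ref{ass:P1}--\ref{ass:thetagamma}, and \remref{rem:assfun}(iii) explicitly flags that \ref{ass:lip} must not be needed here. Second, the route is circular as the paper is organized: the recursion of \lemref{estimate} contains the term $c(\theta_k-\theta_{k+1})\mc{L}\para{x_{k+1},\mu_{k+1}}$, whose control requires $\sup_k\norrm{\mu_k}<\infty$ (see \eqref{multt} and the constant $\Mcstopt$), but \lemref{bounded} proves that boundedness \emph{using} the summability $\seq{\gamma_k\norrm{Ax_k-b}^2},\seq{\gamma_k\norrm{A\minx_k-b}^2}\in\ell^1_+$ established by the feasibility lemma. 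Your proposed sidestep (keeping $\ip{\mu_k,Ax_k-b}{}$ inside $r_k$ and leaning on the quadratic $\norrm{\mu_k-\mus}^2$) repairs the lower bound on $r_k$ but not this upper bound, which is genuinely linear in $\norrm{\mu_k}$.

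The missing idea is the paper's choice of comparison point and Lyapunov function in \lemref{lem:feassum}: instead of $\xs$, one compares $s_{k}$ (via the oracle optimality) against $\minx_{k+1}$, the minimizer of the smoothed augmented Lagrangian $\mc{L}_{k+1}\para{\cdot,\mu_{k+1}}$, and tracks $\Delta_k=\Delta_k^p+\Delta_k^d$ with $\Delta_k^p=\mc{L}_k\para{x_{k+1},\mu_k}-\min_x\mc{L}_k\para{x,\mu_k}$ and $\Delta_k^d=\mc{L}\para{\xs,\mus}-\min_x\mc{L}_k\para{x,\mu_k}$. The strong convexity along $A$ of the quadratic penalty (\lemref{lemma:lowerbound}) then produces $-\tfrac{\gamma_{k+1}\rho_{k+1}}{2}\norrm{A\para{x_{k+1}-\minx_{k+1}}}^2$, the dual-gap part produces $-\theta_k\ip{Ax_{k+1}-b,A\minx_{k+1}-b}{}$, and the Pythagoras identity combines these with $+\theta_k\norrm{Ax_{k+1}-b}^2$ into $-\theta_k\norrm{A\minx_{k+1}-b}^2+\para{\theta_k-\tfrac{\gamma_{k+1}\rho_{k+1}}{2}}\norrm{A\para{x_{k+1}-\minx_{k+1}}}^2$, where \ref{ass:thetagamma}--\ref{ass:gkgkp1} make both coefficients negative of order $\gamma_{k+1}$. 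This yields $\seq{\gamma_k\norrm{Ax_k-b}^2}\in\ell^1_+$ while never touching $\norrm{\mu_k}$ (only the increment $\mu_{k+1}-\mu_k=\theta_k\para{Ax_{k+1}-b}$ appears), never using $L_h$ (only convexity of $h$ and the oracle optimality of $s_{k+1}$ against $\minx_{k+1}$), and never using \ref{ass:cond}. If you want to salvage your $\xs$-based route as a standalone proof of feasibility, you would have to prove dual boundedness first by some independent argument, and you would still be proving a weaker theorem than the one stated.
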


Theorem~\ref{thm:feas} will be proved in \secref{sec:asfeas}.

\begin{theorem}[Convergence to optimality]\label{convergence}
Suppose that assumptions \ref{ass:A1}-\ref{ass:coercive} and \ref{ass:P1}-\ref{ass:cond} hold, with $\gamconinf \geq 1$. Let $\seq{x_k}$ be the sequence of primal iterates generated by \algref{alg:CGAL} and $(\xs,\mus)$ a saddle-point pair for the Lagrangian. Then, in addition to the results of Theorem~\ref{thm:feas}, the following holds
\begin{enumerate}[label=(\roman*)]
\item Convergence of the Lagrangian:
	\begin{equation}\label{limLL}
	\begin{aligned}
		\lim_{k\to\infty} \LL{x_{k},\mus} = \LL{\xs,\mus} .
	\end{aligned}
	\end{equation}
\item Every weak cluster point $\bar{x}$ of $\seq{x_k}$ is a solution of the primal problem \eqref{PProb}, and $\seq{\mu_k}$ converges weakly to $\bar{\mu}$ a solution of the dual problem \eqref{DProb}, i.e., $(\bar{x},\bar{\mu})$ is a saddle point of $\mc{L}$.
\item Pointwise rate: 
	\begin{equation}\label{pointrateopt}
	\begin{split}
		\inf_{0 \leq i \leq k} \LL{x_{i},\mus}-\LL{\xs,\mus} = O\para{\frac{1}{\Gamma_{k}}} \qandq \\
		\text{$\exists$ a subsequence $\subseq{x_{k_j}}$ s.t. for each $j\in\N$,} \LL{x_{k_j+1},\mus}-\LL{\xs,\mus} \leq \frac{1}{\Gamma_{k_j}} .
	\end{split}
	\end{equation}
\item Ergodic rate: for each $k\in\N$, let $\avx_k \eqdef \sum_{i=0}^{k} \gamma_ix_{i+1}/\Gamma_k$. Then
\begin{equation}\label{ergrateopt}
	\begin{split}
	\LL{\avx_{k},\mus}-\LL{\xs,\mus} = O\para{\frac{1}{\Gamma_{k}}} .
	\end{split}
	\end{equation}
\end{enumerate}
\end{theorem}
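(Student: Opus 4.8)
The plan is to derive a single master descent inequality for a carefully designed Lyapunov function combining the smoothed augmented Lagrangian values and the squared norm of the dual multiplier, and then feed this into the real-sequence lemmas (\lemref{Alber2}) proved in \secref{sec:notation}. First I would exploit the conditional-gradient structure: since $x_{k+1} = x_k - \gamma_k(x_k-s_k)$ with $s_k$ the linear-oracle point for $\nabla_x\mc{E}_k(x_k,\mu_k)$, the $(F,\zeta)$-smoothness of $(f,\C)$ (Assumption~\ref{ass:f}) together with the descent property in Remark~\ref{remark}, plus the $\frac{1}{\beta_k}$-Lipschitz continuity of $\nabla g^{\beta_k}$ (Proposition~\ref{prop:moreau}\ref{moreauclaim2}) applied to the quadratic-like terms $g^{\beta_k}(T\cdot)$ and $\frac{\rho_k}{2}\norm{A\cdot-b}^2$, yields a descent inequality of the form
\[
\mc{E}_k(x_{k+1},\mu_k) \leq \mc{E}_k(x_k,\mu_k) + \gamma_k\ip{\nabla_x\mc{E}_k(x_k,\mu_k), s_k-x_k}{} + C\pa{\zeta(\gamma_k) + \gamma_k^2/\beta_k + \gamma_k^2},
\]
for a constant $C$ depending on $\Ccon$, $\rhosup$, $\normm{T}$, $\normm{A}$. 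Using the optimality of $s_k$ for the linear oracle and the subgradient inequality for $h$, the linear term is bounded by $\gamma_k\pa{\mc{E}_k(\xs,\mu_k) + h(\xs) - \mc{E}_k(x_k,\mu_k) - h(x_k)}$ (dropping the feasibility term $Ax^\star=b$), i.e., $-\gamma_k\pa{\mc{L}_k(x_k,\mu_k) - \mc{L}_k(\xs,\mu_k)}$ up to the envelope gap. Adding $h$ to both sides and invoking Proposition~\ref{prop:moreau}\ref{moreauclaim4} to pass between $g^{\beta_k}$ and $g$ (cost $O(\gamma_k\beta_k)$, summable by~\ref{ass:P1}) converts this into a recursion on $\mc{L}_k(x_{k+1},\mu_k)$ versus $\mc{L}_k(x_k,\mu_k)$.

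Next I would incorporate the dual update $\mu_{k+1} = \mu_k + \theta_k(Ax_{k+1}-b)$. Expanding $\norm{\mu_{k+1}-\mus}^2 = \norm{\mu_k-\mus}^2 + 2\theta_k\ip{\mu_k-\mus, Ax_{k+1}-b}{} + \theta_k^2\norm{Ax_{k+1}-b}^2$, and using the saddle-point inequality $\mc{L}(\xs,\mu_k)\le\mc{L}(\xs,\mus)$ which gives $\ip{\mu_k-\mus,Ax_{k+1}-b}{} \le \mc{L}(x_{k+1},\mu_k) - \mc{L}(\xs,\mus) - \pa{\text{primal gap with }\mus}$, I can couple the primal and dual pieces. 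The key algebraic manoeuvre — and the place where Assumptions~\ref{ass:thetagamma} and~\ref{ass:cond} are consumed — is choosing the Lyapunov function
\[
V_k \eqdef \mc{L}_{k}(x_{k},\mus) - \mc{L}(\xs,\mus) + \tfrac{1}{2}\norm{\mu_k - \mus}^2 \quad \text{(or a close variant with $\mc{L}_k(x_k,\mu_k)$)},
\]
so that the cross-terms telescope: the condition $\rho_{k+1}-\rho_k - \gamma_{k+1}\rho_{k+1} + \frac{2}{c}\gamma_k - \frac{\gamma_k^2}{c} \le \gamma_{k+1}$ is exactly what is needed to absorb the change in the penalty term $\frac{\rho_k}{2}\norm{Ax_k-b}^2$ and the $\theta_k^2\norm{Ax_{k+1}-b}^2$ contribution (bounded via $\theta_k = \gamma_k/c$ and the feasibility estimate already available from Theorem~\ref{thm:feas}), while $\frac{\gamcon}{c} - \frac{\rhoinf}{2} < 0$ ensures the coefficient in front of the feasibility-gap term has the right sign. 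The Lipschitz assumption~\ref{ass:lip} on $h$ is used here to control the term $h(x_{k+1}) - h(x_k) = O(\gamma_k \diam)$ arising from the mismatch between $\mc{L}_k(x_{k+1},\cdot)$ and the descent inequality written at $x_{k+1}$ versus the oracle at $x_k$. The outcome should be a recursion
\[
V_{k+1} \leq V_k - \gamma_k w_k + z_k, \qquad w_k \eqdef \mc{L}(x_k,\mus) - \mc{L}(\xs,\mus) \ge 0, \qquad \seq{z_k}\in\ell^1_+,
\]
together with the slow-variation bound $w_k - w_{k+1} \le \alpha\gamma_k$ (using \ref{ass:lip}, \ref{ass:gkgkp1}, $\gamconinf\ge1$, and boundedness of $\seq{x_k}$ in $\C$).

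Finally I would invoke \lemref{Alber2} with $p_k = \gamma_k \notin \ell^1$ (Assumption~\ref{notell1}): part~\ref{alber2wk} gives $\lim_k w_k = 0$, which is~\eqref{limLL}; parts~\ref{alber2wkrateinf}--\ref{alber2wkrate} give the pointwise rates~\eqref{pointrateopt} with $\Gamma_k = \sum_{i=0}^k\gamma_i$ (the ergodic rate~\eqref{ergrateopt} follows by convexity of $\mc{L}(\cdot,\mus)$ and Jensen, since $\sum\gamma_i(\mc{L}(x_{i+1},\mus)-\mc{L}(\xs,\mus))$ is controlled by $V_0 + E$). Convergence of $\seq{V_k}$ also gives boundedness of $\seq{\mu_k}$ (already known from \lemref{bounded}) and, more importantly, that $\norm{\mu_k-\mus}^2$ converges for every saddle-point multiplier $\mus$; combined with the fact that every weak cluster point of $\seq{\mu_k}$ is dual-optimal (a consequence of $w_k\to0$, asymptotic feasibility, and lower semicontinuity), an Opial-type argument yields weak convergence of $\seq{\mu_k}$ to a single $\bar\mu\in\argmin\eqref{DProb}$. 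For the primal statement, a weak cluster point $\bar x$ of $\seq{x_k}$ lies in the weakly compact set $\C$, satisfies $A\bar x = b$ by Theorem~\ref{thm:feas}, and has $\mc{L}(\bar x,\mus)\le\liminf_k\mc{L}(x_k,\mus) = \mc{L}(\xs,\mus)$ by weak lower semicontinuity of $f + g\circ T + h$; hence $\bar x$ is primal-optimal and $(\bar x,\bar\mu)$ is a saddle point. The main obstacle is the bookkeeping in the master inequality — correctly tracking all the error terms ($\zeta(\gamma_k)$, $\gamma_k^2/\beta_k$, $\gamma_k\beta_k$, $\gamma_k^2$, the envelope gaps, and the $\theta_k^2$ feasibility residual) and verifying that the precise form of Assumption~\ref{ass:cond} makes the non-summable cross-terms cancel rather than merely be bounded; this is where coercivity of $\bar\varphi$ on $\ran(A)$ (Remark~\ref{rem:assfun}(vi), \lemref{lem:coercive}) must be used to guarantee $\seq{\mu_k}$ stays bounded so that terms like $\theta_k\norm{\mu_k-\mus}\norm{Ax_{k+1}-b}$ are genuinely $o(\gamma_k)$ or summable.
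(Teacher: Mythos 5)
Your architecture for parts (i), (iii), (iv) and the primal half of (ii) is essentially the paper's: a master energy inequality for a Lyapunov function coupling the smoothed Lagrangian with $\tfrac{c}{2}\norm{\mu_k-\mus}{}^2$ (the paper's \lemref{estimate}, with $r_k=(1-\gamma_k)\Lk{x_k,\mu_k}+\tfrac{c}{2}\norrm{\mu_k-\mus}^2+\dots$), the leftover coefficient of $\norrm{Ax_{k+1}-b}^2$ controlled by \ref{ass:cond} and then absorbed because $\seq{\gamma_k\norrm{Ax_k-b}^2}\in\ell^1_+$ from the feasibility analysis (\lemref{lem:feassum}) --- so your worry about ``cancel versus merely bounded'' is resolved exactly as you suspected --- followed by \lemref{Alber2} with the slow-variation bound $w_k-w_{k+1}\leq\alpha\gamma_k$ (\lemref{lemma:LkBound} and \lemref{lem:feasgammabound}), Jensen for the ergodic rate, weak lower semicontinuity for primal cluster points, and Opial for the dual sequence. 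The only structural difference is cosmetic: the paper's $w_k$ also carries the term $\tfrac{\rho_k}{2}\norrm{Ax_k-b}^2$, which is why the pointwise feasibility and optimality rates hold along the \emph{same} subsequence (see \eqref{rate}); with your choice $w_k=\LL{x_k,\mus}-\LL{\xs,\mus}$ you would lose that simultaneity but not the stated conclusions.

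The genuine gap is the claim, needed both for part (ii) and as input to Opial's lemma, that every weak sequential cluster point of $\seq{\mu_k}$ solves \eqref{DProb}. You assert this follows from ``$w_k\to0$, asymptotic feasibility, and lower semicontinuity,'' but no lower-semicontinuity argument applies directly: dual optimality of $\bar\mu$ means $0\in\p\para{\Phi^*\circ(-A^*)}(\bar\mu)+b$, and what the iteration gives you is only the inclusion $\nabla\varphi_{k_j}(\mu_{k_j})-b\in\p\para{\Phi_{k_j}^*\circ(-A^*)}\para{\mu_{k_j}-\rho_{k_j}\nabla\varphi_{k_j}(\mu_{k_j})}$, in which the \emph{operator itself varies with $k$} (through the smoothing $g^{\beta_k}$ and the penalty $\rho_k$), the point converges only weakly, and the value converges only strongly. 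Passing to the limit here requires sequential closedness of these varying graphs in the weak--strong topology; the paper obtains it by showing the increasing sequence $\Phi_{k_j}$ Mosco-converges to $\Phi$, invoking bicontinuity of Fenchel conjugation under Mosco convergence, and then graph convergence of the subdifferentials (\cite[Theorems~3.18, 3.20, 3.66 and Proposition~3.59]{attouch}). Without this step (or an equivalent variational-convergence argument), dual optimality of the cluster points --- and hence the weak convergence of the whole sequence $\seq{\mu_k}$ via Opial --- is unproven. A secondary, smaller omission: for Opial you also need $\lim_k\norm{\mu_k-\mus}{}^2$ to exist for \emph{every} dual solution $\mus$, which the paper extracts from convergence of $r_k$ only after separately proving $\Lk{x_k,\mu_k}\to\LL{\xs,\mus}$ (using part (i), boundedness of $\seq{\mu_k}$, feasibility, and the Moreau-envelope gap bound \eqref{MMM}); your proposal should make that intermediate limit explicit.
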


An important observation is that Theorem~\ref{convergence}, which will be proved in \secref{sec:opt}, actually shows that
\begin{equation*}
\lim_{k\to\infty} \left[\LL{x_{k},\mus}-\LL{\xs,\mus}+\frac{\rho_k}{2}\norrm{Ax_{k}-b}^2\right]=0,
\end{equation*}
and subsequentially, for each $j\in\N$,
\begin{equation}\label{rate}
\LL{x_{k_j},\mus}-\LL{\xs,\mus}+\frac{\rho_{k_j}}{2}\norrm{Ax_{k_j}-b}^2 \leq \frac{1}{\Gamma_{k_j}} .
\end{equation}
This means, in particular, that the pointwise rate for feasibility and optimality hold simulatenously for the same subsequence.
	
The following corollary is immediate.
\begin{corollary}
Under the assumptions of \thmref{convergence}, if the problem \eqref{PProb} admits a unique solution $\xs$, then the primal-dual pair sequence $\seq{x_k,\mu_k}$ converges weakly to a saddle point $(\xs,\mus)$. 
\end{corollary}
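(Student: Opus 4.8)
The plan is to read the conclusion off almost entirely from Theorem~\ref{convergence}(ii): the two facts we need are already there, namely that \emph{every} weak cluster point of $\seq{x_k}$ solves \eqref{PProb}, and that $\seq{\mu_k}$ itself converges weakly to some solution $\bar\mu$ of \eqref{DProb}, with $(\bar x,\bar\mu)$ a saddle point of $\mc{L}$ for each such cluster point $\bar x$. The only gap to fill is to promote the subsequential weak convergence of $\seq{x_k}$ to convergence of the whole sequence, and this is exactly what the uniqueness hypothesis buys us.

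First I would note that $\seq{x_k}$ is bounded: by Assumption~\ref{ass:P1} we have $\gamma_k\in]0,1]$, so $x_{k+1}=x_k-\gamma_k(x_k-s_k)=(1-\gamma_k)x_k+\gamma_k s_k$ is a convex combination of $x_k$ and $s_k$; since $s_k\in\Argmin_{s\in\HH_p}\brac{h(s)+\ip{z_k,s}{}}$ lies in $\dom(h)=\C$ and $x_0\in\C$, induction together with convexity of $\C$ gives $\seq{x_k}\subset\C$, which is bounded by Assumption~\ref{ass:compact}. Being bounded in the Hilbert space $\HH_p$, the sequence $\seq{x_k}$ is weakly sequentially compact, and each of its weak sequential cluster points lies in $\C$, which is weakly closed (closed and convex by Remark~\ref{rem:assfun}).

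Next I would invoke Theorem~\ref{convergence}(ii): each such weak cluster point is a solution of \eqref{PProb}. Since by hypothesis \eqref{PProb} has the unique minimizer $\xs$, the sequence $\seq{x_k}$ therefore has exactly one weak sequential cluster point, namely $\xs$. A bounded sequence in a Hilbert space all of whose weak sequential cluster points coincide converges weakly to that common value (standard, see e.g.~\cite{BAUSCHCOMB}), hence $x_k\rightharpoonup\xs$. Combining this with $\mu_k\rightharpoonup\bar\mu$ from Theorem~\ref{convergence}(ii) and relabeling $\mus\eqdef\bar\mu$, we obtain $(x_k,\mu_k)\rightharpoonup(\xs,\mus)$ in $\HH_p\times\HH_d$, and $(\xs,\mus)$ is a saddle point of $\mc{L}$ since $(\bar x,\bar\mu)$ is one for $\bar x=\xs$. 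No step here is a genuine obstacle; the only point requiring a little care is the elementary weak-compactness fact used to pass from ``unique cluster point'' to ``weakly convergent'', which may simply be cited rather than reproved.
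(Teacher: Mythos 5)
Your argument is correct and follows essentially the same route as the paper: uniqueness forces the single weak sequential cluster point $\xs$ via Theorem~\ref{convergence}(ii), and then boundedness of $\seq{x_k}\subset\C$ together with the standard fact that a bounded sequence with a unique weak sequential cluster point converges weakly (this is \cite[Lemma~2.38]{BAUSCHCOMB}, the reference the paper cites) gives $x_k\rightharpoonup\xs$, while the weak convergence of $\seq{\mu_k}$ is already part of Theorem~\ref{convergence}(ii). Your write-up merely spells out the boundedness of the iterates and the weak compactness step in more detail than the paper does.
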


\begin{proof}
By uniqueness, it follows from Theorem~\ref{convergence}(ii) that $\seq{x_k}$ has exactly one weak sequential cluster point which is the solution to~\eqref{PProb}. Weak convergence of the sequence $\seq{x_k}$ then follows from \cite[Lemma~2.38]{BAUSCHCOMB}. 
\end{proof}

\begin{example}\label{ex:seqconv}
Suppose that the sequences of parameters are chosen according to Example~\ref{ex:seq}. Let the function $\sigma: t \in \R^+ \mapsto (\log(t+2))^a/(t+1)^{1-b}$. We obviously have $\sigma(k) = \gamma_k$ for $k \in \N$. Moreover, it is easy to see that $\exists k' \geq 0$ (depending on $a$ and $b$), such that $\sigma$ is decreasing for $t \geq k'$. Thus, $\forall k \geq k'$, we have
\[
\Gamma_k \geq \sum_{i=k'}^k \gamma_i \geq \int_{k'}^{k+1} \sigma(t) dt \geq \int_{k'+1}^{k+2} (\log(t))^a t^{b-1} dt = \int_{\log(k'+1)}^{\log(k+2)} t^{a} e^{b t} dt .
\]
It is easy to show, using integration by parts for the first case, that
\[
\Gamma_k^{-1} = 
\begin{cases}
o\para{\frac{1}{(k+2)^{b}}} & a=1, b > 0 , \\
O\para{\frac{1}{(k+2)^{b}}} & a=0, b > 0 , \\
O\para{\frac{1}{\log(k+2)}} & a=0, b=0 .
\end{cases}
\]
This result reveals that picking $a$ and $b$ as large as possible results in a faster convergence rate, with the proviso that $b$ satisfy some conditions for \ref{ass:P1}-\ref{ass:cond} to hold, see the discussion in Example~\ref{ex:seq} for the largest possible choice of $b$.
\end{example}

\subsection{Preparatory results}

The next result is a direct application of the Descent Lemma \ref{classicaldescent} and the generalized one in Lemma \ref{desclemma} to the specific case of Algorithm \ref{alg:CGAL}. It allows to obtain a descent property for the function $\Ek{\cdot,\mu_k}$ between the previous iterate $x_k$ and next one $x_{k+1}$.

\begin{lemma}\label{DescLemma}
	Suppose Assumptions \ref{ass:A1}, \ref{ass:f} and \ref{ass:P1} hold. For each $k\in\N$, define the quantity
	\begin{equation}\label{Lk}
	L_k\eqdef \frac{\norrm{T}^2}{\beta_k}+\norrm{A}^2\rho_k.
	\end{equation} 
	Then, for each $k\in\N$, we have the following inequality:
\begin{equation*}
	\begin{split}
		\Ek{x_{k+1},\mu_k} & \leq \Ek{x_k,\mu_k} + \scal{\nabla_x \Ek{x_k,\mu_k}}{x_{k+1}-x_k} +  K_{\left(F,\zeta,\C\right)}\zeta\left(\gamma_k\right) \\
				   & \qquad + \frac{L_k}{2}\norrm{x_{k+1}-x_k}^2. 
	\end{split}
\end{equation*}
	\end{lemma}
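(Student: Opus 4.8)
The plan is to split the functional $\mc{E}_k(\cdot,\mu_k)$ into the three additive pieces $f$, $g^{\beta_k}\circ T$, and the quadratic term $q_k(x) \eqdef \ip{\mu_k, Ax-b}{}+\frac{\rho_k}{2}\norm{Ax-b}{}^2$, to obtain a descent estimate for each piece between $x_k$ and $x_{k+1}$, and then to add them up, using that the gradient is additive so that $\nabla_x\Ek{x_k,\mu_k}=\nabla f(x_k)+T^*[\nabla g^{\beta_k}](Tx_k)+A^*\mu_k+\rho_k A^*(Ax_k-b)$ as in \eqref{gradE}. Throughout I would use that, by construction of \algref{alg:CGAL}, $x_{k+1}=x_k+\gamma_k(s_k-x_k)$ with $\gamma_k\in]0,1]$ (Assumption~\ref{ass:P1}), that $x_k\in\C$ for all $k$ (the iterates stay in the convex set $\C$ since $x_0\in\C$ and $x_{k+1}$ is a convex combination of $x_k$ and $s_k$), and that $s_k\in\C=\dom(h)$ because $s_k$ minimizes $h(\cdot)+\ip{z_k,\cdot}{}$ and hence lies in $\dom(h)$.

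For the term $f$: since $(f,\C)$ is $(F,\zeta)$-smooth by Assumption~\ref{ass:f}, applying the descent property recorded in Remark~\ref{remark} to the points $x_k,s_k\in\C$ and step $\gamma_k$ gives
\[
f(x_{k+1}) \leq f(x_k)+\gamma_k\ip{\nabla f(x_k),s_k-x_k}{}+K_{\left(F,\zeta,\C\right)}\zeta(\gamma_k) = f(x_k)+\ip{\nabla f(x_k),x_{k+1}-x_k}{}+K_{\left(F,\zeta,\C\right)}\zeta(\gamma_k).
\]
For the term $g^{\beta_k}\circ T$: by Proposition~\ref{prop:moreau}\ref{moreauclimconv} and \ref{moreauclaim2}, $g^{\beta_k}$ is finite-valued and differentiable on $\HH_v$ with $\tfrac1{\beta_k}$-Lipschitz gradient; composing with the bounded linear operator $T$ yields a function differentiable on $\HH_p$ with $\tfrac{\norm{T}{}^2}{\beta_k}$-Lipschitz gradient $T^*[\nabla g^{\beta_k}](T\cdot)$ (the factor $\norm{T}{}^2$ arising because both the argument and the gradient are transformed by $T$ and $T^*$). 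The classical Descent Lemma \eqref{classicaldescent} with $\nu=1$ then gives
\[
(g^{\beta_k}\circ T)(x_{k+1}) \leq (g^{\beta_k}\circ T)(x_k)+\ip{T^*[\nabla g^{\beta_k}](Tx_k),x_{k+1}-x_k}{}+\frac{\norm{T}{}^2}{2\beta_k}\norm{x_{k+1}-x_k}{}^2.
\]
For the term $q_k$: this is an inhomogeneous quadratic in $x$, so an exact expansion around $x_k$ followed by the bound $\norm{A d}{}^2\leq \norm{A}{}^2\norm{d}{}^2$ gives
\[
q_k(x_{k+1}) = q_k(x_k)+\ip{A^*\mu_k+\rho_k A^*(Ax_k-b),x_{k+1}-x_k}{}+\frac{\rho_k}{2}\norm{A(x_{k+1}-x_k)}{}^2 \leq q_k(x_k)+\ip{\nabla q_k(x_k),x_{k+1}-x_k}{}+\frac{\rho_k\norm{A}{}^2}{2}\norm{x_{k+1}-x_k}{}^2.
\]

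Summing the three inequalities, the first-order terms assemble into $\ip{\nabla_x\Ek{x_k,\mu_k}}{x_{k+1}-x_k}{}$ by the chain/sum rule for the gradient, the second-order terms combine into $\big(\tfrac{\norm{T}{}^2}{2\beta_k}+\tfrac{\rho_k\norm{A}{}^2}{2}\big)\norm{x_{k+1}-x_k}{}^2=\tfrac{L_k}{2}\norm{x_{k+1}-x_k}{}^2$ with $L_k$ as in \eqref{Lk}, and the only remaining excess term is $K_{\left(F,\zeta,\C\right)}\zeta(\gamma_k)$, which is precisely the claimed inequality. I do not anticipate a genuine analytic obstacle here: the lemma is an assembly of already-proved descent estimates. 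The only points requiring a little care are the bookkeeping ones — checking $x_k,s_k\in\C$ so that Remark~\ref{remark} is applicable, and correctly tracking that composition with $T$ scales the Lipschitz constant of $\nabla(g^{\beta_k}\circ T)$ by $\norm{T}{}^2$ rather than $\norm{T}{}$.
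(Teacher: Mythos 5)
Your proof is correct and follows essentially the same route as the paper: the paper groups $g^{\beta_k}\circ T$ together with the affine/quadratic term into a single function $\widetilde{\mc{E}}_k(\cdot,\mu_k)$ whose gradient is $L_k$-Lipschitz and applies the classical Descent Lemma once, while you split off the quadratic $q_k$ and expand it exactly before bounding $\norm{A(x_{k+1}-x_k)}{}^2\leq\norm{A}{}^2\norm{x_{k+1}-x_k}{}^2$ — the constants recombine into the same $L_k/2$. The treatment of $f$ via $(F,\zeta)$-smoothness and Remark~\ref{remark}, including the verification that $x_k,s_k\in\C$ and $\gamma_k\in]0,1]$, matches the paper's argument exactly.
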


\begin{proof}
	Define for each $k\in\N$,
	\[
	\Etk{x,\mu}\eqdef g^{\beta_k}\para{T x}+\ip{\mu, Ax-b}{}+\frac{\rho_k}{2}\norm{Ax-b}{}^2,
	\]
	so that $\Ek{x,\mu}=f(x)+\Etk{x,\mu}$. Compute 
	\begin{equation*}
		\nabla_x\Etk{x,\mu}=T^*\nabla g^{\beta_k}(Tx)+A^*\mu+\rho_kA^*\left(Ax-b\right),
	\end{equation*}
	which is Lipschitz-continuous with constant $L_k=\frac{\norrm{T}^2}{\beta_k}+\norrm{A}^2\rho_k$ by virtue of \ref{ass:A1} and \propref{prop:moreau}\ref{moreauclaim2}.
	Then we can use the Descent Lemma \eqref{classicaldescent} with $\nu=1$ on $\Etk{\cdot,\mu_k}$ between the points $x_k$ and $x_{k+1}$, to obtain, for each $k\in\N$,
	\begin{equation}\label{e1}
	\Etk{x_{k+1},\mu_k}\leq\Etk{x_{k},\mu_k}+\langle \nabla \Etk{x_{k},\mu_k}, x_{k+1}-x_k\rangle + \frac{L_k}{2}\norm{x_{k+1}-x_k}{}^2.
	\end{equation}
	From Assumption \ref{ass:f}, Lemma \ref{desclemma} and Remark \ref{remark}, we have, for each $k\in\N$,
	\begin{equation*}
		\begin{split}
			f(x_{k+1}) & \leq f(x_k)+\langle \nabla f(x_k), x_{k+1}-x_k\rangle +D_F(x_{k+1},x_k)\\
			& \leq f(x_k)+\langle \nabla f(x_k), x_{k+1}-x_k\rangle +K_{\left(F,\zeta,\C\right)}\zeta\left(\gamma_k\right),
		\end{split}
	\end{equation*}
where we used that both $x_k$ and $s_k$ lie in $\C$, that $\gamma_{k}$ belongs to $]0,1]$ by \ref{ass:P1} and thus $x_{k+1}=x_k+\gamma_{k}\para{s_k-x_k} \in \C$.
	Summing \eqref{e1} with the latter and recalling that $\Ek{x,\mu_k}=f(x)+\Etk{x,\mu_k}$, we obtain the claim.
	\end{proof}
Again for the function $\Ek{\cdot,\mu_k}$, we also have a lower-bound, presented in the next lemma.
\begin{lemma}\label{lemma:lowerbound}
	Suppose Assumptions \ref{ass:A1} and \ref{ass:f} hold. Then, for all $k\in\N$, for all $x,x'\in\HH_p$ and for all $\mu\in\HH_d$,
	\begin{equation*}
		\begin{split}
			\Ek{x,\mu} & \geq \Ek{x',\mu} + \scal{\nabla_x \Ek{x',\mu}}{x-x'}+\frac{\rho_k}{2}\norrm{A(x-x')}^2.
		\end{split}
	\end{equation*}
\end{lemma}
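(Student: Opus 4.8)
The plan is to establish this lower bound by decomposing $\mc{E}_k\para{\cdot,\mu}$ into a convex quadratic-in-$A$ part and the remaining convex part, and applying the appropriate convexity/gradient inequalities to each. Writing $\mc{E}_k\para{x,\mu} = f(x) + g^{\beta_k}\para{Tx} + \ip{\mu, Ax-b}{} + \frac{\rho_k}{2}\norm{Ax-b}{}^2$, the first four steps I would take are: (1) For the affine term $x \mapsto \ip{\mu, Ax-b}{}$, which is linear, the ``gradient inequality'' holds with equality, so it contributes exactly $\ip{A^*\mu, x-x'}{}$ and no remainder. (2) For the quadratic penalty $q(x) \eqdef \frac{\rho_k}{2}\norm{Ax-b}{}^2$, a direct expansion gives the exact identity $q(x) = q(x') + \scal{\nabla q(x')}{x-x'} + \frac{\rho_k}{2}\norm{A(x-x')}{}^2$, since $\nabla q(x') = \rho_k A^*(Ax'-b)$; this is where the extra $\frac{\rho_k}{2}\norrm{A(x-x')}^2$ term in the claim comes from. (3) For $g^{\beta_k}\circ T$, by \propref{prop:moreau}\ref{moreauclimconv} the Moreau envelope $g^{\beta_k}$ is convex and real-valued, hence $g^{\beta_k}\circ T$ is convex and differentiable, so the gradient inequality \eqref{subdiff} yields $g^{\beta_k}(Tx) \geq g^{\beta_k}(Tx') + \scal{T^*[\nabla g^{\beta_k}](Tx')}{x-x'}$. (4) For $f$, here I would use Assumption~\ref{ass:A1}, which states $f \in \Gamma_0\para{\HH_p}$: combined with differentiability of $f$ on the relevant set (from \ref{ass:f}), convexity gives $f(x) \geq f(x') + \scal{\nabla f(x')}{x-x'}$ for the relevant points.

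Summing the four pieces, the linear terms on the right combine to $\scal{\nabla f(x') + T^*[\nabla g^{\beta_k}](Tx') + A^*\mu + \rho_k A^*(Ax'-b)}{x-x'} = \scal{\nabla_x \mc{E}_k\para{x',\mu}}{x-x'}$ by the formula \eqref{gradE}, and the only leftover term is the exact quadratic remainder $\frac{\rho_k}{2}\norm{A(x-x')}{}^2$ from step (2). This gives precisely the asserted inequality. One technical subtlety worth flagging: the statement is claimed for \emph{all} $x, x' \in \HH_p$, whereas $f$ is only assumed differentiable on an open set $\C_0 \supset \C$ via Assumption~\ref{ass:f}, so strictly one should either restrict to $x, x' \in \C_0$ (which is all that is needed in the convergence analysis, since the iterates lie in $\C$), or interpret $\nabla f$ via the subdifferential; I would simply note that the inequality is used only for points in $\C$ and invoke convexity of $f \in \Gamma_0$ together with differentiability there.

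I do not expect any serious obstacle here — this is essentially a bookkeeping lemma assembling one-sided first-order estimates. The only mild care needed is (a) making sure the gradient formula \eqref{gradE} is invoked correctly so the linear terms telescope into $\scal{\nabla_x \mc{E}_k(x',\mu)}{x-x'}$, and (b) being precise that the convexity of $g^{\beta_k}\circ T$ follows from \propref{prop:moreau}\ref{moreauclimconv} and the linearity of $T$, while convexity of $f$ is part of \ref{ass:A1}. The quadratic identity in step (2) is the one computation that genuinely contributes the $\frac{\rho_k}{2}\norrm{A(x-x')}^2$ correction, so I would write that expansion out explicitly rather than citing the generic Descent Lemma, to make the source of the remainder transparent.
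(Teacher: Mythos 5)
Your proposal is correct and follows essentially the same route as the paper's proof: both split $\Ek{\cdot,\mu}$ into a convex part handled by the gradient inequality and the quadratic penalty $\frac{\rho_k}{2}\norrm{A\cdot-b}^2$, whose exact second-order expansion (equivalently, strong convexity of $\frac{\rho_k}{2}\norrm{\cdot-b}^2$ between $Ax$ and $Ax'$) supplies the remainder $\frac{\rho_k}{2}\norrm{A(x-x')}^2$. The only differences are cosmetic — you split the convex part into three sub-pieces where the paper treats it as a single function $\Eko{\cdot,\mu}$ — and your remark about differentiability of $f$ being guaranteed only on $\C_0$ is a fair caveat that the paper's statement also implicitly relies on.
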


\begin{proof}
	First, split the function $\Ek{\cdot,\mu}$ as $\Ek{x,\mu}=\Eko{x,\mu}+\frac{\rho_k}{2}\norrm{Ax-b}^2$ for an opportune definition of $\Eko{\cdot,\mu}$. For the first term, simply by convexity, we have
	\begin{equation}\label{a1}
	\begin{split}
	\Eko{x,\mu} & \geq \Eko{x',\mu} + \scal{\nabla_x \Eko{x',\mu}}{x-x'}.
	\end{split}
	\end{equation}
	Now use the strong convexity of the term $\left(\rho_k/2\right)\norrm{\cdot-b}^2$ between points $Ax$ and $Ax'$, to affirm that
	\begin{equation}\label{a2}
	\frac{\rho_k}{2}\norrm{Ax-b}^2 \geq \frac{\rho_k}{2}\norrm{Ax'-b}^2+\langle \nabla\left(\frac{\rho_k}{2}\norrm{\cdot-b}^2\right)\left(Ax'\right), Ax-Ax'\rangle +\frac{\rho_k}{2}\norrm{A(x-x')}^2.
	\end{equation}
	Compute
	\begin{equation*}
	\begin{split}
	\langle \nabla\left(\frac{\rho_k}{2}\norrm{\cdot-b}^2\right)\left(Ax'\right), Ax-Ax'\rangle & = \rho_k \langle A^*\left(Ax'-b\right), x-x'\rangle\\
	&=\langle \nabla\left(\frac{\rho_k}{2}\norrm{A\cdot-b}^2\right)\left(x'\right), x-x'\rangle.
	\end{split}
	\end{equation*}
	Summing \eqref{a1} and \eqref{a2} and invoking the gradient computation above, we obtain the claim.
\end{proof}

\begin{lemma}{}\label{lemma:LkBound}
Suppose that assumptions \ref{ass:A1}-\ref{ass:coercive} and \ref{ass:P1}-\ref{ass:cond} hold, with $\gamconinf \geq 1$. Let $\seq{x_k}$ be the sequence of primal iterates generated by \algref{alg:CGAL} and $\mus$ a solution of the dual problem \eqref{DProb}. Then we have the following estimate,
\newq{
\mc{L}\para{x_k,\mus} - \mc{L}\para{x_{k+1},\mus} \leq \gamma_kd_\C \para{M\norm{T}{} + D + L_h + \norm{A}{}\norm{\mus}{}}
}
\end{lemma}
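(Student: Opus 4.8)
The plan is to expand the difference $\mc{L}(x_k,\mus)-\mc{L}(x_{k+1},\mus)$ into its four additive pieces and bound each one separately, using nothing more than the update rule of \algref{alg:CGAL} and elementary convexity. First note that, by \ref{ass:P1}, $\gamma_k\in\ ]0,1]$, so $x_{k+1}=(1-\gamma_k)x_k+\gamma_k s_k$ is a convex combination of $x_k$ and $s_k$. Since $x_0\in\C$, $\C$ is convex by \ref{ass:A1}, and $s_k\in\dom(h)=\C$ (because $s_k$ minimizes $h(\cdot)+\ip{z_k,\cdot}{}$, hence lies in $\dom(h)$), a straightforward induction shows $x_k,x_{k+1},s_k\in\C$ for every $k$. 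Therefore $x_k-x_{k+1}=\gamma_k(x_k-s_k)$ and, $\C$ being bounded by \ref{ass:compact}, we get $\norm{x_k-x_{k+1}}{}\le\gamma_kd_{\C}$, where $d_{\C}$ denotes the diameter of $\C$.

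Next, write
\[
\mc{L}(x_k,\mus)-\mc{L}(x_{k+1},\mus)=\big(f(x_k)-f(x_{k+1})\big)+\big(g(Tx_k)-g(Tx_{k+1})\big)+\big(h(x_k)-h(x_{k+1})\big)+\ip{\mus,A(x_k-x_{k+1})}{}
\]
and bound the four terms in turn. For the first, convexity of $f$ (by \ref{ass:A1}, together with differentiability from \ref{ass:f}) gives $f(x_k)-f(x_{k+1})\le\ip{\nabla f(x_k),x_k-x_{k+1}}{}\le D\,\norm{x_k-x_{k+1}}{}$, where $D$ is the uniform bound on $\norm{\nabla f(\cdot)}{}$ over $\C$ appearing in the statement. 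For the second, since $Tx_k\in T\C\subset\dom(\p g)$ by \ref{ass:interior}, the minimal-norm subgradient $u_k\eqdef[\p g(Tx_k)]^0$ exists, and the subgradient inequality for $g$ applied at $Tx_k$ together with \eqref{MMM} yields $g(Tx_k)-g(Tx_{k+1})\le\ip{u_k,T(x_k-x_{k+1})}{}\le\norm{u_k}{}\,\norm{T}{}\,\norm{x_k-x_{k+1}}{}\le M\norm{T}{}\,\norm{x_k-x_{k+1}}{}$. For the third, \ref{ass:lip} applied to $x_k,x_{k+1}\in\C$ gives $h(x_k)-h(x_{k+1})\le L_h\norm{x_k-x_{k+1}}{}$. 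For the last, Cauchy--Schwarz and boundedness of $A$ give $\ip{\mus,A(x_k-x_{k+1})}{}\le\norm{A}{}\,\norm{\mus}{}\,\norm{x_k-x_{k+1}}{}$. Summing these four estimates and substituting $\norm{x_k-x_{k+1}}{}\le\gamma_kd_{\C}$ gives exactly the claimed bound.

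There is no genuine difficulty here; the only two points that need a little attention are (i) checking that all the relevant iterates $x_k,x_{k+1},s_k$ actually lie in $\C$, which is what legitimizes the Lipschitz bound on $h$, the diameter bound, and the use of \eqref{MMM}, and (ii) handling the nonsmooth term $g\circ T$ via the minimal-norm selection of $\p g(Tx_k)$ --- this is precisely where Assumption~\ref{ass:interior}, through the uniform bound \eqref{MMM}, is used to produce a constant independent of $k$. The stronger hypotheses \ref{ass:rangeA}--\ref{ass:cond} and the condition $\gamconinf\ge1$ listed in the statement play no role in this particular estimate; they are inherited from the ambient convergence argument in which the lemma is embedded.
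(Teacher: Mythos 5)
Your proof is correct and follows essentially the same route as the paper's: both bound $\mc{L}(x_k,\mus)-\mc{L}(x_{k+1},\mus)$ term by term using the minimal-norm subgradient of $g$ at $Tx_k$ together with \eqref{MMM}, the gradient inequality for $f$ with the uniform bound $D$, the relative Lipschitz continuity of $h$ from \ref{ass:lip}, Cauchy--Schwarz on the multiplier term, and finally $\norm{x_k-x_{k+1}}{}=\gamma_k\norm{x_k-s_k}{}\le\gamma_k d_{\C}$. Your closing remark that \ref{ass:rangeA}--\ref{ass:cond} and $\gamconinf\ge 1$ are not actually used in this estimate is also accurate.
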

\begin{proof}
First define $u_{k}\eqdef\left[\partial g(Tx_{k})\right]^0$ and recall that, by \ref{ass:interior} and its consequence in \eqref{MMM}, $\norrm{u_{k}} \leq \gcon$ for every $k \in \N$. Then,
	\newq{
	\LL{x_{k},\mus}-\LL{x_{k+1},\mus} &=\Phi(x_{k})-\Phi(x_{k+1})+\scal{\mus}{A\para{x_{k}-x_{k+1}}}\\
	&\leq \scal{u_{k}}{T(x_{k}-x_{k+1})}+\scal{\nabla f(x_{k})}{x_{k}-x_{k+1}}\\
	&\quad+L_h\norrm{x_{k}-x_{k+1}}+\norrm{\mus}\ \norrm{A}\ \norrm{x_{k}-x_{k+1}},
	}
where we used the subdifferential inequality \eqref{subdiff} on $g$, the gradient inequality on $f$, the $L_h$-Lipschitz continuity of $h$ relative to $\C$ (see~\ref{ass:lip}), and the Cauchy-Schwartz inequality on the scalar product. Since $x_{k+1}=x_{k}+\gamma_{k} \left(x_{k}-s_{k}\right)$, we obtain
	\begin{equation*}
	\begin{split}
	\LL{x_{k},\mus}-\LL{x_{k+1},\mus}	
	&\leq \gamma_{k} \Big(\scal{u_{k}}{T(x_{k}-s_{k})}+\scal{\nabla f(x_{k})}{x_{k}-s_{k}}+ L_h\norrm{x_{k}-s_{k}}\\
	&\quad+\norrm{\mus}\ \norrm{A}\ \norrm{x_{k}-s_{k}}\Big)\\
	&\leq \gamma_{k} \diam\para{\gcon\norrm{T}+D+L_h+\norrm{\mus}\ \norrm{A}},
	\end{split}
	\end{equation*}
	where we have denoted by $D$ the constant $D \eqdef \sup_{x\in\C}\norrm{\nabla f(x)} < +\infty$ (see ~\eqref{D}). 
\end{proof}

\begin{lemma}\label{lem:feasgammabound}
Suppose that assumptions \ref{ass:compact} and \ref{ass:rhodec} hold. Let $\seq{x_k}$ be the sequence of primal iterates generated by \algref{alg:CGAL}. Then we have the following estimate,
\newq{
\frac{\rho_k}{2}\norm{Ax_k-b}{}^2 - \frac{\rho_{k+1}}{2}\norm{Ax_{k+1}-b}{}^2 \leq \rhosup\diam\norrm{A}\left(\norrm{A}\Ccon+\norrm{b}\right)\gamma_k,
}
where $\Ccon$ is the radius of the ball containing $\C$ and $\rhosup = \sup_k\rho_k$.
\end{lemma}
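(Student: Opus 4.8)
The plan is to reduce the statement to an elementary estimate, exploiting the monotonicity of $\seq{\rho_k}$ from Assumption~\ref{ass:rhodec} together with the fact that the iterates and the oracle points remain in the bounded set $\C$.

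First I would use that $\seq{\rho_k}$ is non-decreasing and bounded above by $\rhosup$ (Assumption~\ref{ass:rhodec}), so that $\rho_k \leq \rho_{k+1} \leq \rhosup$ and hence
\[
\frac{\rho_k}{2}\norm{Ax_k-b}{}^2 - \frac{\rho_{k+1}}{2}\norm{Ax_{k+1}-b}{}^2 \leq \frac{\rho_{k+1}}{2}\left(\norm{Ax_k-b}{}^2 - \norm{Ax_{k+1}-b}{}^2\right).
\]
Writing $u = \norm{Ax_k-b}{} \geq 0$ and $v = \norm{Ax_{k+1}-b}{} \geq 0$, I would then bound $u^2 - v^2 \leq |u^2-v^2| = |u-v|(u+v)$ and invoke the reverse triangle inequality $|u-v| \leq \norm{A(x_k-x_{k+1})}{} \leq \norm{A}{}\,\norm{x_k-x_{k+1}}{}$, which sidesteps any sign discussion.

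It then remains to control $\norm{x_k-x_{k+1}}{}$ and $u+v$. Since $x_{k+1} = x_k - \gamma_k(x_k-s_k)$ with $x_k \in \C$ (by convexity of $\C$, as noted after the statement of Algorithm~\ref{alg:CGAL}) and $s_k \in \C = \dom(h)$ (being a minimizer of the linear oracle over $\dom(h)$), we get $\norm{x_k-x_{k+1}}{} = \gamma_k\norm{x_k-s_k}{} \leq \gamma_k\diam$. Likewise, Assumption~\ref{ass:compact} gives $x_k,x_{k+1} \in \C$ contained in a ball of radius $\Ccon$, so $u,v \leq \norm{A}{}\Ccon + \norm{b}{}$, whence $u+v \leq 2(\norm{A}{}\Ccon+\norm{b}{})$.

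Combining these estimates and using $\rho_{k+1} \leq \rhosup$ once more yields
\[
\frac{\rho_k}{2}\norm{Ax_k-b}{}^2 - \frac{\rho_{k+1}}{2}\norm{Ax_{k+1}-b}{}^2 \leq \frac{\rhosup}{2}\,\norm{A}{}\,\gamma_k\diam\cdot 2\left(\norm{A}{}\Ccon+\norm{b}{}\right) = \rhosup\diam\norm{A}{}\left(\norm{A}{}\Ccon+\norm{b}{}\right)\gamma_k,
\]
which is exactly the claimed bound. I do not expect any genuine obstacle here: the only points requiring mild care are making the factorization/reverse-triangle step legitimate regardless of the sign of $u^2-v^2$, and expressing the displacement bound in terms of $\diam$ (rather than $\Ccon$) so as to match the stated constant exactly.
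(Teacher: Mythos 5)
Your proof is correct and follows essentially the same route as the paper's: both first use the monotonicity of $\seq{\rho_k}$ to pass from $\rho_k$ to $\rho_{k+1}\leq\rhosup$, then control $\|x_k-x_{k+1}\|$ by $\gamma_k\diam$ and $\|Ax_k-b\|$ by $\|A\|\Ccon+\|b\|$. The only (immaterial) difference is the middle step — the paper invokes the convexity gradient inequality for $\tfrac{\rho_{k+1}}{2}\|A\cdot-b\|^2$, whereas you factor $u^2-v^2=(u-v)(u+v)$ and use the reverse triangle inequality — and both yield the identical constant.
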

\begin{proof}
By \ref{ass:rhodec} and convexity of the function $\frac{\rho_{k+1}}{2}\norrm{A\cdot-b}^2$, we have
	\begin{equation*}
	\begin{split}
	\frac{\rho_k}{2}\norrm{Ax_{k}-b}^2-\frac{\rho_{k+1}}{2}\norrm{Ax_{k+1}-b}^2 &\leq \frac{\rho_{k+1}}{2}\norrm{Ax_{k}-b}^2-\frac{\rho_{k+1}}{2}\norrm{Ax_{k+1}-b}^2\\ & \leq \scal{\nabla\left(\frac{\rho_{k+1}}{2}\norrm{A\cdot-b}^2\right)(x_{k})}{x_{k}-x_{k+1}}.
	\end{split}
	\end{equation*}
Now compute the gradient and use the definition of $x_{k+1}$, to obtain
\begin{equation*}
	\begin{split}
		\frac{\rho_k}{2}\norrm{Ax_{k}-b}^2-\frac{\rho_{k+1}}{2}\norrm{Ax_{k+1}-b}^2 &\leq\rho_{k+1}\gamma_{k}\scal{Ax_{k}-b}{A\left(x_{k}-s_{k}\right)}\\
		&\leq \rhosup\diam\norrm{A}\left(\norrm{A}\Ccon+\norrm{b}\right)\gamma_k.
	\end{split}
\end{equation*}
	In the last inequality, we used Cauchy-Schwartz inequality, triangle inequality, the fact that\\ $\norm{x_{k}-s_{k}}{}\leq \diam$, and assumptions \ref{ass:compact} and \ref{ass:rhodec} (respectively, $\sup_{x\in\C} \norrm{x}\leq \Ccon$ and $\rho_{k+1}\leq \rhosup$).
\end{proof}
\subsection{Asymptotic feasibility}\label{sec:asfeas}
We begin with an intermediary lemma establishing the main feasibility estimation and some summability results that will also be used in the proof of optimality.
\begin{lemma}\label{lem:feassum}
Suppose that Assumptions~\ref{ass:A1}-\ref{ass:interior} and \ref{ass:existence} hold. Consider the sequence of iterates $\seq{x_k}$ from Algorithm~\ref{alg:CGAL} with parameters satisfying Assumptions~\ref{ass:P1}-\ref{ass:thetagamma}. Define the two quantities $\Delta^p_k$ and $\Delta^d_k$ in the following way,
\newq{
\Delta^p_k \eqdef  \mc{L}_{k}\para{x_{k+1},\mu_k}-\minL_{k}\para{\mu_k},\quad \Delta^d_k \eqdef  \minL-\minL_{k}\para{\mu_k},
}
where we have denoted $\minL_{k}\para{\mu_k}\eqdef \min_{x}\mc{L}_k\para{x,\mu_k}$ and $\minL \eqdef  \mc{L}\para{\xs,\mus}$. Denote the sum $\Delta_{k} \eqdef  \Delta_k^p+\Delta_k^d$. Then we have the following estimation,
\newq{
\Delta_{k+1}\leq \Delta_k - \gamma_{k+1}\para{\frac{\gamconinf}{c}\norm{A\minx_{k+1} -b}{}^2 + \delta\norm{A\para{x_{k+1}-\minx_{k+1}}}{}^2} + \frac{L_{k+1}}{2}\gamma_{k+1}^2\diam^2\\
+K_{\para{F,\zeta,\C}}\zeta\para{\gamma_{k+1}}+\frac{\beta_{k}-\beta_{k+1}}{2}\gcon+\para{\frac{\rho_{k+1}-\rho_k}{2}}\norm{Ax_{k+1}-b}{}^2,
}
and, moreover,
\newq{
\seq{\gamma_{k}\norm{A\minx_{k}-b}{}^2} \in \ell^1_+, \quad \seq{\gamma_{k}\norm{A\para{x_{k}-\minx_{k}}}{}^2}\in\ell^1_+, \qandq\seq{\gamma_{k}\norm{Ax_{k}-b}{}^2}\in\ell^1_+.
}
\end{lemma}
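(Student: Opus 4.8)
The plan is to obtain a recursive inequality for $\Delta_k = \Delta_k^p + \Delta_k^d$ by controlling separately the evolution of the primal gap $\Delta_k^p$ and the dual gap $\Delta_k^d$, and then to invoke the summability machinery of \lemref{Alber2} (or \lemref{pre-Alb}) to extract the $\ell^1_+$ conclusions. First I would analyze $\Delta_{k+1}^p = \mc{L}_{k+1}(x_{k+2},\mu_{k+1}) - \minL_{k+1}(\mu_{k+1})$. The key is that the primal update $x_{k+2}$ is a conditional-gradient step on $\mc{E}_{k+1}(\cdot,\mu_{k+1})$, so I would apply the Descent Lemma \lemref{DescLemma} to bound $\mc{E}_{k+1}(x_{k+2},\mu_{k+1})$ in terms of $\mc{E}_{k+1}(x_{k+1},\mu_{k+1})$ plus a linear term, $K_{(F,\zeta,\C)}\zeta(\gamma_{k+1})$, and $\tfrac{L_{k+1}}{2}\gamma_{k+1}^2\diam^2$ (using $\norm{x_{k+2}-x_{k+1}}{}\le\gamma_{k+1}\diam$ from \ref{ass:compact}). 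Since $s_{k+1}$ minimizes the linearized objective including $h$, the linear term combines with $h$ to give a bound by $\minL_{k+1}(\mu_{k+1}) - \mc{L}_{k+1}(x_{k+1},\mu_{k+1})$ via the lower bound \lemref{lemma:lowerbound}; crucially the strong-convexity term $\tfrac{\rho_{k+1}}{2}\norm{A(x_{k+1}-\hat x_{k+1})}{}^2$ appears here, where $\hat x_{k+1}$ denotes the minimizer of $\mc{L}_{k+1}(\cdot,\mu_{k+1})$. This produces a contraction-type inequality for $\Delta_{k+1}^p$ with a negative term $-\gamma_{k+1}\delta\norm{A(x_{k+1}-\hat x_{k+1})}{}^2$.

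Next I would handle the dual gap. Writing $\Delta_{k+1}^d - \Delta_k^d = \minL_k(\mu_k) - \minL_{k+1}(\mu_{k+1})$, I would split this into (a) the change from updating the multiplier, $\minL_{k+1}(\mu_k) - \minL_{k+1}(\mu_{k+1})$, and (b) the change from updating the parameters $\beta_k\to\beta_{k+1}$, $\rho_k\to\rho_{k+1}$, i.e. $\minL_k(\mu_k)-\minL_{k+1}(\mu_k)$. For (b), since $\beta\mapsto g^\beta$ is nonincreasing with the quantitative bound in \propref{prop:moreau}\ref{moreauclaim4} and using \eqref{MMM}, together with the monotonicity of $\rho_k$, I get the terms $\tfrac{\beta_k-\beta_{k+1}}{2}\gcon$ and $\bigl(\tfrac{\rho_{k+1}-\rho_k}{2}\bigr)\norm{Ax_{k+1}-b}{}^2$. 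For (a), the multiplier update $\mu_{k+1}=\mu_k+\theta_k(Ax_{k+1}-b)$ with $\theta_k=\gamma_k/c$ (Assumption \ref{ass:thetagamma}) is a gradient-ascent-flavored step on the concave dual $\mu\mapsto\minL_{k+1}(\mu)$, whose gradient at any minimizer-attaining point is $A\hat x_{k+1}-b$. Using concavity plus a careful completion of squares, I would bound $\minL_{k+1}(\mu_k)-\minL_{k+1}(\mu_{k+1})$ above by a term of the form $-\gamma_{k+1}\tfrac{\gamconinf}{c}\norm{A\hat x_{k+1}-b}{}^2$ plus lower-order pieces, exploiting \ref{ass:gkgkp1} to relate $\theta_k$ and $\gamma_{k+1}$, and the condition $\tfrac{\gamcon}{c}-\tfrac{\rhoinf}{2}<0$ in \ref{ass:thetagamma} to absorb the quadratic error from the step size against the strong-convexity term recovered in the primal analysis.

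Summing the primal and dual estimates yields the claimed recursion for $\Delta_{k+1}$. For the summability statements, I would observe that $\Delta_k \ge 0$ (by definition of $\minL_k(\mu_k)$ as a minimum, and $\Delta_k^d\ge 0$ since $\minL_k(\mu_k)\le\minL$ as the smoothed-penalized Lagrangian lies below the true optimal value — this needs the saddle-point assumption \ref{ass:existence} and that $g^{\beta_k}\le g$). The error terms on the right are summable: $\seq{\zeta(\gamma_k)}\in\ell^1_+$ and $\seq{\gamma_k\beta_k}\in\ell^1_+$ and $\seq{\gamma_k^2/\beta_k}\in\ell^1_+$ by \ref{ass:P1} (noting $L_{k+1}\gamma_{k+1}^2 \asymp \gamma_{k+1}^2/\beta_{k+1} + \rho_{k+1}\gamma_{k+1}^2$), the telescoping $\sum(\beta_k-\beta_{k+1})<\infty$ by \ref{ass:beta}, and the $\rho$-increment term is controlled using \ref{ass:cond}/\ref{ass:rhodec} together with boundedness of $\norm{Ax_{k+1}-b}{}$ (which follows from $x_k\in\C$ and \ref{ass:compact}). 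Then \lemref{pre-Alb} applied to $r_k = \Delta_k$ gives convergence of $\seq{\Delta_k}$ and that the negative terms are in $\ell^1_+$, i.e. $\seq{\gamma_k\norm{A\hat x_k-b}{}^2}\in\ell^1_+$ and $\seq{\gamma_k\norm{A(x_k-\hat x_k)}{}^2}\in\ell^1_+$; the third conclusion $\seq{\gamma_k\norm{Ax_k-b}{}^2}\in\ell^1_+$ follows by the triangle/parallelogram inequality combining the first two. The main obstacle I anticipate is the bookkeeping in step (a) of the dual analysis: getting the precise constants $\gamconinf/c$ and $\delta$ to come out right requires simultaneously balancing the conditional-gradient inexactness, the vanishing dual step size $\theta_k=\gamma_k/c$, the strong-convexity modulus $\rho_{k+1}$, and the ratio controls in \ref{ass:gkgkp1}, and it is here that Assumptions \ref{ass:thetagamma} and \ref{ass:cond} are used in an essential and delicate way.
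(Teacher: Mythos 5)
Your proposal is correct and follows essentially the same route as the paper: decompose $\Delta_k$ into primal and dual gaps, use the Descent Lemma together with the oracle definition of $s_{k+1}$ and the strong-convexity lower bound of \lemref{lemma:lowerbound} on the primal side, bound the dual decrement via the supergradient $A\minx_{k+1}-b$ and the multiplier update, balance $\theta_k$ against $\gamma_{k+1}\rho_{k+1}/2$ using \ref{ass:gkgkp1}--\ref{ass:thetagamma}, and conclude with \lemref{Alber2}\ref{alber2rk}. The only bookkeeping slips are that the telescoping terms $\tfrac{\beta_k-\beta_{k+1}}{2}\gcon$ and $\tfrac{\rho_{k+1}-\rho_k}{2}\norrm{Ax_{k+1}-b}^2$ actually arise from re-indexing the Lagrangian at the iterate $x_{k+1}$ on the primal side (the comparison of minimum values $\minL_k(\mu_k)-\minL_{k+1}(\mu_k)$ is simply nonpositive), and Assumption \ref{ass:cond} is not needed for this lemma.
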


\begin{proof}
First notice that the quantity $\Delta^p_k \geq 0$ and can be seen as a primal gap at iteration $k$ while $\Delta^d_k$ may be negative but is bounded from below by our assumptions. Indeed, in view of \ref{ass:A1}, \ref{ass:existence} and \remref{rem:assfun}\ref{rem:assfunproper}, $\minL_{k}\para{\mu_k}$ is bounded from above since
\newq{
\minL_{k}\para{\mu_k}	&\leq \mc{L}_k\para{\xs,\mu_k}\\
			&=f\para{\xs} + g^{\beta_k}\para{T\xs} + h\para{\xs} + \ip{\mu_k, A\xs-b}{} + \frac{\rho_k}{2}\norm{A\xs-b}{}^2\\
			&=f\para{\xs} + g^{\beta_k}\para{T\xs} + h\para{\xs} \\
			&\leq f\para{\xs} + g\para{T\xs} + h\para{\xs} < \pinfty ,
}
where we used \propref{prop:moreau}\ref{moreauclaim4} in the last inequality.

We denote a minimizer of $\mc{L}_k\para{x,\mu_k}$ by $\minx_k \in \Argmin\limits_{x\in\mc{H}_p}\mc{L}_k\para{x,\mu_k}$, which exists and belongs to $\C$ by \ref{ass:A1}-\ref{ass:compact}. Then, we have
\nnewq{\label{eq:dualgap}
\Delta^d_{k+1}-\Delta^d_k
&=\mc{L}_k\para{\minx_k,\mu_k} -\mc{L}_{k+1}\para{\minx_{k+1},\mu_{k+1}}.
}
Since $\minx_k$ is a minimizer of $\mc{L}_k\para{x,\mu_k}$ we have that $\mc{L}_k\para{\minx_k,\mu_k}\leq \mc{L}_k\para{\minx_{k+1},\mu_k}$ which leads to,
\newq{
\mc{L}_{k}\para{\minx_{k+1},\mu_k} 
&=\mc{L}_{k+1}\para{\minx_{k+1},\mu_k}+g^{\beta_k}\para{T\minx_{k+1}}- g^{\beta_{k+1}}\para{T\minx_{k+1}}+\tfrac{\rho_k-\rho_{k+1}}{2}\norm{A\minx_{k+1}-b}{}^2 \\
&\leq \mc{L}_{k+1}\para{\minx_{k+1},\mu_k},
}
where the last inequality comes from \propref{prop:moreau}\ref{moreauclaim4} and the assumptions \ref{ass:beta} and \ref{ass:rhodec}.
Combining this with \eqref{eq:dualgap},
\nnewq{\label{eq:dualgap2}
	\Delta^d_{k+1}-\Delta^d_k&\leq \mc{L}_{k+1}\para{\minx_{k+1},\mu_k}-\mc{L}_{k+1}\para{\minx_{k+1},\mu_{k+1}}\\
	&=\ip{\mu_k-\mu_{k+1},A\minx_{k+1}-b}{}\\
	&=-\theta_k\ip{Ax_{k+1}-b,A\minx_{k+1}-b}{},
}
where in the last equality we used the definition of $\mu_{k+1}$.
Meanwhile, for the primal gap we have
\newq{
\Delta_{k+1}^p-\Delta_k^p &= \para{\mc{L}_{k+1}\para{x_{k+2},\mu_{k+1}}-\mc{L}_k\para{x_{k+1},\mu_k}}+\para{\mc{L}_k\para{\minx_k,\mu_k}-\mc{L}_{k+1}\para{\minx_{k+1},\mu_{k+1}}}.
}
Note that
\[
\mc{L}_k\para{x_{k+1},\mu_k}=\mc{L}_k\para{x_{k+1},\mu_{k+1}}-\theta_k\norm{Ax_{k+1}-b}{}^2
\]
and estimate $\mc{L}_k\para{\minx_k,\mu_k}-\mc{L}_{k+1}\para{\minx_{k+1},\mu_{k+1}}$ as in \eqref{eq:dualgap2}, to get
\nnewqml{\label{eq:primalgap}
\Delta_{k+1}^p-\Delta_k^p	\leq  \mc{L}_{k+1}\para{x_{k+2},\mu_{k+1}}-\mc{L}_k\para{x_{k+1},\mu_{k+1}}+\theta_k\norm{Ax_{k+1}-b}{}^2 \\ 
-\theta_k\ip{Ax_{k+1}-b,A\minx_{k+1}-b}{}.
}
Using \eqref{eq:dualgap2} and \eqref{eq:primalgap}, we then have
\newqml{
\Delta_{k+1}-\Delta_k \leq \mc{L}_{k+1}\para{x_{k+2},\mu_{k+1}}-\mc{L}_k\para{x_{k+1},\mu_{k+1}}+\theta_k\norm{Ax_{k+1}-b}{}^2 \\
-2\theta_k\ip{Ax_{k+1}-b,A\minx_{k+1}-b}{}.
}
Note that 
\newq{
	\mc{L}_k\para{x_{k+1},\mu_{k+1}}=\mc{L}_{k+1}\para{x_{k+1},\mu_{k+1}}-\left[g^{\beta_{k+1}}-g^{\beta_k}\right]\para{T x_{k+1}}-\para{\frac{\rho_{k+1}-\rho_k}{2}}\norm{Ax_{k+1}-b}{}^2.
}
Then
\newqml{
	\Delta_{k+1}-\Delta_k \leq \mc{L}_{k+1}\para{x_{k+2},\mu_{k+1}}-\mc{L}_{k+1}\para{x_{k+1},\mu_{k+1}}+g^{\beta_{k+1}}\para{T x_{k+1}}-g^{\beta_k}\para{T x_{k+1}}\\
	 +\para{\frac{\rho_{k+1}-\rho_k}{2}}\norm{Ax_{k+1}-b}{}^2+\theta_k\norm{Ax_{k+1}-b}{}^2-2\theta_k\ip{Ax_{k+1}-b,A\minx_{k+1}-b}{}.
}
We denote by $\Te =\mc{L}_{k+1}\para{x_{k+2},\mu_{k+1}}-\mc{L}_{k+1}\para{x_{k+1},\mu_{k+1}}$ and the remaining part of the right-hand side by $\Tee$.	For the moment, we focus our attention on $\Te$. Recall that $\mc{L}_{k}\para{x,\mu_{k}}=\mc{E}_k\left(x,\mu_{k}\right)+h\left(x\right)$ and apply \lemref{DescLemma} between points $x_{k+2}$ and $x_{k+1}$, to get
\newqml{
\Te \leq h\para{x_{k+2}}-h\para{x_{k+1}}+\ip{\nabla_x \mc{E}_{k+1}\para{x_{k+1},\mu_{k+1}},x_{k+2}-x_{k+1}}{}\\
+K_{\para{F,\zeta,\C}}\zeta\para{\gamma_{k+1}}+\frac{L_{k+1}}{2}\norm{x_{k+2}-x_{k+1}}{}^2.
}
By \ref{ass:A1} we have that $h$ is convex and thus, since $x_{k+2}$ is a convex combination of $x_{k+1}$ and $s_{k+1}$, we get
\newqml{
\Te \leq -\gamma_{k+1}\para{h\para{x_{k+1}}-h\para{s_{k+1}}+\ip{\nabla_x\mc{E}_{k+1}\para{x_{k+1},\mu_{k+1}}, x_{k+1}-s_{k+1}}{}}\\
+\frac{L_{k+1}}{2}\norm{x_{k+2}-x_{k+1}}{}^2+K_{\para{F,\zeta,\C}}\zeta\para{\gamma_{k+1}}.
}
Applying the definition of $s_k$ as the minimizer of the linear minimization oracle and \lemref{lemma:lowerbound} at the points $\minx_{k+1}$, $x_{k+1}$, and $\mu_{k+1}$ gives,
\newq{
\Te &\leq -\gamma_{k+1}\para{h\para{x_{k+1}}-h\para{\minx_{k+1}}+\ip{\nabla_x\mc{E}_{k+1}\para{x_{k+1},\mu_{k+1}}, x_{k+1}-\minx_{k+1}}{}}\\
&\quad\quad +\frac{L_{k+1}}{2}\norm{x_{k+2}-x_{k+1}}{}^2+K_{\para{F,\zeta,\C}}\zeta\para{\gamma_{k+1}}\\
	&\leq -\gamma_{k+1}\Big(h\para{x_{k+1}}-h\para{\minx_{k+1}}+\mc{E}_{k+1}\para{x_{k+1},\mu_{k+1}}-\mc{E}_{k+1}\para{\minx_{k+1},\mu_{k+1}}\\
&\quad\quad +\frac{\rho_{k+1}}{2}\norm{A\para{x_{k+1}-\minx_{k+1}}}{}^2\Big)+\frac{L_{k+1}}{2}\norm{x_{k+2}-x_{k+1}}{}^2 +K_{\para{F,\zeta,\C}}\zeta\para{\gamma_{k+1}}\\
	&= -\gamma_{k+1}\para{\mc{L}_{k+1}\para{x_{k+1},\mu_{k+1}}-\mc{L}_{k+1}\para{\minx_{k+1},\mu_{k+1}}+\frac{\rho_{k+1}}{2}\norm{A\para{x_{k+1}-\minx_{k+1}}}{}^2} \\
&\quad\quad +\frac{L_{k+1}}{2}\norm{x_{k+2}-x_{k+1}}{}^2+K_{\para{F,\zeta,\C}}\zeta\para{\gamma_{k+1}} \\
	&\leq -\frac{\gamma_{k+1}\rho_{k+1}}{2}\norm{A\para{x_{k+1}-\minx_{k+1}}}{}^2+\frac{L_{k+1}}{2}\norm{x_{k+2}-x_{k+1}}{}^2+K_{\para{F,\zeta,\C}}\zeta\para{\gamma_{k+1}} ,
}
where we used that $\minx_{k+1}$ is a minimizer of $\mc{L}_{k+1}\para{\cdot,\mu_{k+1}}$ in the last inequality.
Now, combining $\Te$ and $\Tee$ and using the Pythagoras identity we have
\nnewqml{\label{eq:aux}
\Delta_{k+1}-\Delta_k \leq -\theta_{k}\norm{A\minx_{k+1}-b}{}^2+\para{\theta_{k}-\gamma_{k+1}\frac{\rho_{k+1}}{2}}\norm{A\para{x_{k+1}-\minx_{k+1}}}{}^2\\
+\frac{L_{k+1}}{2}\norm{x_{k+2}-x_{k+1}}{}^2+K_{\para{F,\zeta,\C}}\zeta\para{\gamma_{k+1}}+\left[g^{\beta_{k+1}}-g^{\beta_k}\right]\para{T x_{k+1}} \\ +\frac{\rho_{k+1}-\rho_k}{2}\norm{Ax_{k+1}-b}{}^2.
}

Under \ref{ass:thetagamma} we have $\theta_k=\frac{\gamma_{k}}{c}$ for some $c>0$ such that
\newq{
	\exists \delta>0,\quad \frac{\gamcon}{c} -\frac{\rhoinf}{2}=-\delta < 0,
}
where $\gamcon$ is the constant such that $\gamma_{k}\leq \gamcon \gamma_{k+1}$ (see Assumption \ref{ass:gkgkp1}).
Then, using \ref{ass:gkgkp1} and the above inequality,
\nnewq{\label{eq:aux1}
	\theta_{k}-\gamma_{k+1}\frac{\rho_{k+1}}{2}\leq \para{\frac{\gamcon}{c}-\frac{\rho_{k+1}}{2}}\gamma_{k+1}\leq\para{\frac{\gamcon}{c}-\frac{\rhoinf}{2}}\gamma_{k+1}=-\delta\gamma_{k+1} \tandt \theta_{k} \geq \tfrac{\gamconinf \gamma_{k+1}}{c} .
}
Now use the fact that $x_{k+2} = x_{k+1}+\gamma_{k+1}\para{s_{k+1}-x_{k+1}}$ to estimate
\begin{equation}\label{eq:aux2}
\norm{x_{k+2}-x_{k+1}}{}^2\leq \gamma_{k+1}^2\diam^2.
\end{equation}
Moreover, by the two assumptions~\ref{ass:beta}, \ref{ass:interior} and \propref{prop:moreau}\ref{moreauclaim4}, \eqref{MMM} holds with a constant $\gcon > 0$ , and thus with \propref{prop:moreau}\ref{moreauclaim3} we obtain
\nnewq{\label{eq:aux3}
	\left[g^{\beta_{k+1}}-g^{\beta_k}\right]\para{T x_{k+1}}\leq\frac{\beta_{k}-\beta_{k+1}}{2}\norm{\mat{\p g\para{T x_{k+1}}}^0}{}^2\leq \frac{\beta_{k}-\beta_{k+1}}{2}\gcon.
} 
Plugging \eqref{eq:aux1}, \eqref{eq:aux2} and \eqref{eq:aux3} into \eqref{eq:aux}, we get
\nnewq{\label{eq:estfeas}
	\Delta_{k+1}-\Delta_k	&\leq -\frac{\gamconinf}{c}\gamma_{k+1}\norm{A\minx_{k+1}-b}{}^2-\delta\gamma_{k+1}\norm{A\para{x_{k+1}-\minx_{k+1}}}{}^2+\frac{L_{k+1}}{2}\gamma_{k+1}^2\diam^2\\
	&\quad\quad +K_{\para{F,\zeta,\C}}\zeta\para{\gamma_{k+1}}+\frac{\beta_{k}-\beta_{k+1}}{2}\gcon+\para{\frac{\rho_{k+1}-\rho_k}{2}}\norm{Ax_{k+1}-b}{}^2.
}
Because of the assumptions \ref{ass:P1} and \ref{ass:rhobound}, and in view of the definition of $L_{k}$ in \eqref{Lk}, we have the following,
\newq{
\frac{L_{k}}{2}\gamma_{k}^2\diam^2 &= \frac{1}{2}\para{\frac{\norm{T}{}^2}{\beta_k}+\norm{A}{}^2\rho_k}\gamma_{k}^2\diam^2 \in\ell^1_+.
}
For the telescopic terms from the right hand side of \eqref{eq:estfeas} we have
\newq{
\frac{\beta_{k}-\beta_{k+1}}{2} \in\ell^1_+ \tandt \para{\frac{\rho_{k+1}-\rho_k}{2}}\norm{Ax_{k+1}-b}{}^2\leq \para{\rho_{k+1}-\rho_k}\para{\norm{A}{}^2 \Ccon^2+\norm{b}{}^2}\in\ell^1_+,
}
where $\Ccon$ is the constant arising from \ref{ass:compact}.
Under \ref{ass:P1} we also have that
\newq{
K_{\para{F,\zeta,\C}}\zeta\para{\gamma_{k+1}}\in\ell^1_+.
}

Using the notation of Lemma~\ref{Alber2}, we set
\newq{
r_k &= \Delta_k, \quad p_k = \gamma_{k+1}, \quad w_k = \para{\frac{\gamconinf}{c}\norm{A\minx_{k+1} -b}{}^2 + \delta\norm{A\para{x_{k+1}-\minx_{k+1}}}{}^2}, \\
z_k &= \frac{L_{k+1}}{2}\gamma_{k+1}^2\diam^2+K_{\para{F,\zeta,\C}}\zeta\para{\gamma_{k+1}}+\frac{\beta_{k}-\beta_{k+1}}{2}\gcon+\para{\frac{\rho_{k+1}-\rho_k}{2}}\norm{Ax_{k+1}-b}{}^2 .
}
We have shown above that 
\[
r_{k+1} \leq r_{k} - p_{k}w_{k} + z_k ,
\]
where $\seq{z_k} \in \ell_+^1$, and $r_k$ is bounded from below. We then deduce using Lemma~\ref{Alber2}\ref{alber2rk} that $\seq{r_k}$ is convergent and 
\nnewq{\label{eq:starl1}
\seq{\gamma_{k}\norm{A\minx_{k}-b}{}^2} \in \ell^1_+, \quad \seq{\gamma_{k}\norm{A\para{x_{k}-\minx_{k}}}{}^2}\in\ell^1_+.
}
Consequently, 
\nnewq{\label{eq:feasxkl1}
\seq{\gamma_{k}\norm{Ax_{k}-b}{}^2}\in\ell^1_+,
}
since, by Jensen's inequality,
\newq{
\sum\limits_{k=1}^\infty \gamma_{k}\norm{Ax_{k}-b}{}^2 &\leq 2\sum\limits_{k=1}^\infty \gamma_{k}\para{\norm{A\para{x_{k}-\minx_{k}}}{}^2+\norm{A\minx_{k}-b}{}^2} < \pinfty.
}
\end{proof}
We are now ready to prove Theorem~\ref{thm:feas}, i.e., to show that the sequence of iterates $\seq{x_k}$ is asymptotically feasible. 
\begin{proof}
\begin{enumerate}[label=(\roman*)]
\item By \lemref{lem:feasgammabound} with $\rho_k\equiv\rho_{k+1}\equiv2$, we have
\newq{
\norm{Ax_{k}-b}{}^2-\norm{Ax_{k+1}-b}{}^2 &\leq 2\gamma_k \diam\norm{A}{}\para{\norm{A}{}\Ccon + \norm{b}{}}.
}
Using this together with \lemref{lem:feassum} and Assumption~\ref{notell1}, we are in position to apply \lemref{Alber2}\ref{alber2wk} to conclude that $\lim_{k\to\infty}\norm{Ax_k-b}{} = 0$.

\item The rates in \eqref{pointratefeas} follow respectively from \lemref{Alber2}\ref{alber2wkrateinf} and \lemref{Alber2}\ref{alber2wkrate}.

\item We have, by Jensen's inequality and \lemref{lem:feassum}, that
\[
\norm{A\avx_k - b}{}^2 \leq \frac{1}{\Gamma_k}\sum_{i=0}^k \gamma_i \norm{Ax_i - b}{}^2 \leq \frac{1}{\Gamma_k}\sum_{i=0}^{\pinfty} \gamma_i \norm{Ax_i - b}{}^2 = O\para{\frac{1}{\Gamma_{k}}} .
\]
\end{enumerate}
\end{proof}
\subsection{Dual multiplier boundedness}
In this part we provide a lemma that shows the sequence of dual variables $\seq{\mu_k}$ generated by \algref{alg:CGAL} is bounded.

We start by studying coercivity of $\bar{\varphi}$.
\begin{lemma}\label{lem:coercive}
Suppose that Assumptions \ref{ass:A1}-\ref{ass:compact} and \ref{ass:existence}-\ref{ass:coercive} hold. Then $\bar\varphi$ is coercive on $\ran\para{A}$.
\end{lemma}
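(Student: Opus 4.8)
The plan is to unpack the definition $\bar\varphi(\mu) = \bar\Phi^*(-A^*\mu) + \scal{b}{\mu}$ and to reduce coercivity on $\ran(A) = \ker(A^*)^\bot$ (using \ref{ass:rangeA}) to two ingredients: coercivity of the conjugate $\bar\Phi^*$ in the relevant directions, and control of the linear term $\scal{b}{\mu}$. First I would note that $\bar\Phi = \Phi + (\rhosup/2)\norm{A\cdot - b}^2 = f + g\circ T + h + (\rhosup/2)\norm{A\cdot-b}^2$, which is in $\Gamma_0(\HH_p)$ by \ref{ass:A1} and \ref{ass:compact}, so $\bar\Phi^*$ is a well-defined closed convex function. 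The key point is to relate coercivity of $\mu \mapsto \bar\Phi^*(-A^*\mu)$ restricted to $\ran(A)$ to the condition $0 \in \inte(\dom(\bar\Phi))$ via \propref{prop:coerc}: indeed $\mu \mapsto \bar\Phi^*(-A^*\mu)$ is, up to the isomorphism between $\ran(A)$ and $(\ker A^*)^\bot$, the conjugate of a suitable "pushforward" of $\bar\Phi$, and we want $0$ to be in the interior of its domain relative to $\ran(A)$.

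Concretely, I would split into the two cases of \ref{ass:coercive}. In the infinite-dimensional case \ref{ass:coerciveinfdim}, we have a point $\bar x \in A^{-1}(b) \cap \inte(\dom(g\circ T)) \cap \inte(\C)$; since $\dom(\bar\Phi) = \dom(g\circ T) \cap \C$ (as $f$ is finite-valued and the quadratic is everywhere finite) and $A\bar x = b$, a standard open-mapping / interior-point argument shows that $b \in \inte(A\,\dom(\bar\Phi))$, hence $0 \in \inte(A\,\dom(\bar\Phi) - b)$, which is exactly the qualification needed so that the conjugate of the value function $v(y) = \inf\{\bar\Phi(x) : Ax = b + y\}$ is coercive; but $v^*(\mu) = \bar\Phi^*(-A^*\mu) + \scal{b}{\mu} = \bar\varphi(\mu)$ up to sign conventions, so \propref{prop:coerc} applied to $v$ (whose conjugate is $\bar\varphi$ composed with the relevant reflection) yields coercivity of $\bar\varphi$ on $\ran(A)$. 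In the finite-dimensional case \ref{ass:coercivefindim}, the relative-interior condition plays the role of the interior condition for the value function restricted to $\ran A + b$, and the transversality condition $\ran(A^*)\cap \LinHull(\dom(g\circ T)\cap\C)^\bot = \{0\}$ is precisely what rules out directions $\mu \in \ran(A)$ along which $A^*\mu$ is "flat" for $\bar\Phi^*$; here I would invoke \propref{prop:recfun} and \propref{prop:supfun} to translate coercivity into a statement about the recession/support function, namely that $\sigma_{\dom(\bar\Phi^*)}(A^*\mu) - \scal{b}{\mu} > 0$ (equivalently $> $ something) for every $\mu \in \ran(A)\setminus\{0\}$, and the two displayed conditions in \eqref{eq:findimass} are exactly what is needed for this. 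The existence of a saddle point \ref{ass:existence} enters to guarantee that $\bar\varphi$ is proper (its infimum is finite, attained at $\mus$ essentially), so that "coercive" is not vacuous.

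The main obstacle I anticipate is the bookkeeping around the linear term $\scal{b}{\mu}$ and the reduction to the value function: one must be careful that it is coercivity of the \emph{sum} $\bar\Phi^*(-A^*\cdot) + \scal{b}{\cdot}$ on $\ran(A)$, not of $\bar\Phi^*\circ(-A^*)$ alone, and matching this to \propref{prop:coerc} requires identifying $\bar\varphi$ as (a reflection of) the Fenchel conjugate of the infimal value function $y \mapsto \inf_x\{\bar\Phi(x) : Ax - b = y\}$, and then checking $0 \in \inte(\dom(\text{value function}))$ in case (a), or the analogous finite-dimensional recession-function criterion in case (b). The rest — that $\dom(\bar\Phi)$ is the stated intersection, that $f$ contributes nothing to coercivity since it is bounded on the weakly compact $\C$, and that $\ker(A^*)^\bot = \ran(A)$ under \ref{ass:rangeA} — is routine.
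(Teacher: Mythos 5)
Your treatment of case (b) is essentially the paper's own argument: compute the recession function of $\bar\varphi$ via \propref{prop:recfun}, and derive positivity on $\ran(A)\setminus\{0\}$ from \propref{prop:supfun} together with the transversality condition in \eqref{eq:findimass}. One correction there: the relevant support function is $\sigma_{\dom(\bar\Phi)-c}$ for $c\in A^{-1}(b)$, since by \propref{prop:recfun}\ref{eq:recfun2} the recession function of $\bar\Phi^*$ is $\sigma_{\dom(\bar\Phi^{**})}=\sigma_{\dom(\bar\Phi)}$ — not $\sigma_{\dom(\bar\Phi^*)}$ as you wrote — and the linear term $\scal{b}{\mu}$ is absorbed precisely by translating the domain by $c$.

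In case (a) your route through the value function $v(y)=\inf\{\bar\Phi(x):Ax=b+y\}$ is genuinely different from the paper's, and as written it has a real gap: the claim that $b\in\inte\para{A\,\dom(\bar\Phi)}$ is false whenever $A$ is not surjective, because $A\,\dom(\bar\Phi)\subset\ran(A)$ then has empty interior in $\HH_d$. What the open mapping theorem (applied to $A:\HH_p\to\ran(A)$, which is where \ref{ass:rangeA} enters) gives is interiority \emph{relative to} $\ran(A)$; you would then need to apply \propref{prop:coerc} to the restriction of (the lsc closure of) $v$ to the Hilbert space $\ran(A)$, checking that $v$ is proper and that taking the closure changes neither the conjugate nor the interior of the domain. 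The paper avoids all of this by staying in the primal space: pick $c\in A^{-1}(b)$, write $\bar\varphi(\mu)=\para{\bar\Phi^*-\ip{c,\cdot}{}}\para{-A^*\mu}$, apply \propref{prop:coerc} to $\para{\bar\Phi^*-\ip{c,\cdot}{}}^*=\bar\Phi(\cdot+c)$ so that \ref{ass:coercive}\ref{ass:coerciveinfdim} is used verbatim, obtain a linear minorant $a\norm{\cdot}{}+\beta$ for $\bar\Phi^*-\ip{c,\cdot}{}$ on all of $\HH_p$, and transfer it to $\ran(A)$ via $\norm{A^*\mu}{}\geq\alpha\norm{\mu}{}$. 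Your approach is salvageable — and would even yield coercivity under the slightly weaker hypothesis that $b$ lies in the interior of $A\,\dom(\bar\Phi)$ relative to $\ran(A)$ — but the interiority step must be repaired and the reduction to the subspace supplied before it constitutes a proof.
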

\begin{proof}
From~\eqref{notation}, we have, for any $c \in A^{-1}(b)$, that
\[
\bar{\varphi}(\mu)  = \para{\bar\Phi^* + \ip{-c, \cdot}{}}\para{-A^*\mu} .
\]
Moreover, Assumptions~\ref{ass:A1} and \ref{ass:rangeA} entail that $\bar{\Phi} \in \Gamma_0(\mc{H}_p)$. 
We now consider separately the two assumptions.
\begin{enumerate}[label=(\alph*)]
\item Case of~\ref{ass:coercive}\ref{ass:coerciveinfdim}:
If follows from the Fenchel-Moreau theorem (\cite[Theorem~13.32]{BAUSCHCOMB}) that 
\[
\para{\bar\Phi^* - \ip{c,\cdot}{}}^*=\bar\Phi^{**}\para{\cdot+c}=\bar\Phi\para{\cdot+c} .
\] 
Using this, together with Proposition~\ref{prop:coerc} and \ref{ass:f}, we can assert that $\bar\Phi^* - \ip{c,\cdot}{}$ is coercive if and only if 
\newq{
0 \in \inte\para{\dom\para{\bar\Phi\para{\cdot+c}}} 
= \inte\para{\dom\para{\Phi}} - c 
&= \inte\para{\dom\para{g \circ T} \cap \C} - c \\
&= \inte\para{\dom\para{g \circ T}} \cap \inte\para{\C} - c .
}
But this is precisely what \ref{ass:coercive}\ref{ass:coerciveinfdim} guarantees. In turn, using \cite[Proposition~14.15]{BAUSCHCOMB}, \ref{ass:coercive}\ref{ass:coerciveinfdim} is equivalent to 
\[
\exists (a > 0, \beta \in \R), \quad \bar\Phi^* - \ip{c,\cdot}{} \geq a\norm{\cdot}{} + \beta .
\]
Using standard results on linear operators in Hilbert spaces~\cite[Facts~2.18 and~2.19]{BAUSCHCOMB}, we have 
\[
\text{\ref{ass:rangeA}} \iff (\exists \alpha > 0), (\forall \mu \in \ran(A)), \quad \norm{A^*\mu}{} \geq \alpha \norm{\mu} .
\]
Combining the last two inequalities, we deduce that under \ref{ass:coercive}\ref{ass:coerciveinfdim},
\[
\exists (a > 0, \alpha > 0, \beta \in \R), (\forall \mu \in \ran(A)), \quad \bar{\varphi}(\mu) \geq a\norm{A^*\mu}{} + \beta \geq a\alpha\norm{\mu}{} + \beta ,
\]
which in turn is equivalent to coercivity of $\bar{\varphi}$ on $\ran(A)$ by \cite[Proposition~14.15]{BAUSCHCOMB}.

\item Case of~\ref{ass:coercive}\ref{ass:coercivefindim}: 
Since $\HH_d$ is finite dimensional, We have, $\forall u\in\HH_d$,
	\newq{
		\bar\varphi^\infty\para{u} 		&= \para{\para{\bar\Phi^* + \ip{-c, \cdot}{}}\circ\para{-A^*}}^\infty\para{u}\\
\text{\small{(\propref{prop:recfun}\ref{eq:recfun3})}}\quad	&= \para{\bar\Phi^* + \ip{-c, \cdot}{}}^\infty\para{-A^*u}\\
\text{\small{(\propref{prop:recfun}\ref{eq:recfun2})}}\quad	&= \sigma_{\dom\para{\bar\Phi^* + \ip{-c, \cdot}{}}^*}\para{-A^*u}\\
							&= \sigma_{\dom\para{\bar\Phi\para{\cdot+c}}}\para{-A^*u} \\
							&= \sigma_{\dom\para{\bar\Phi}-c}\para{-A^*u} \\
\text{\small{(by \ref{ass:f})}}\quad			&= \sigma_{\dom\para{g \circ T} \cap \C - c}\para{-A^*u} .
	}
	Notice that, by Assumption \ref{ass:interior}, we have $\dom\para{g \circ T} \cap \C=\C$. Thus, using \propref{prop:recfun}\ref{eq:recfun1}, we have the following chain of equivalences
	\newq{
	\text{$\bar\varphi$ is coercive on $\ran\para{A}$} 
	&\iff \bar\varphi^\infty\para{u}>0, \quad \forall u\in\ran\para{A}\setminus\brac{0} \\
	&\iff \sigma_{\C - c}(-A^*u) > 0, \quad \forall u\in\ran\para{A}\setminus\brac{0} .
	}
	For this to hold, and since $\ran\para{A} = \ker \para{A^*}^\bot$, a sufficient condition is that
	\nnewq{\label{eq:sigDpos}
	\sigma_{\C - c}(x) > 0,  \quad \forall x\in\ran\para{A^*}\setminus\brac{0} .
	}
	It remains to check that the latter condition holds under~\ref{ass:coercive}\ref{ass:coercivefindim}. First, observe that $\C$ is a nonempty bounded convex set thanks to \ref{ass:A1} and \ref{ass:compact}. The first condition in \ref{ass:coercive}\ref{ass:coercivefindim} is equivalent to $0 \in \ri(\C - c)$ for some $c \in A^{-1}(b)$. It then follows from \propref{prop:supfun} that 
	\newq{
	\sigma_{\C - c}(x) > 0, \forall x \not\in \LinHull(\C - c)^\bot = \LinHull(\C)^\bot ,
	}
	which then implies \eqref{eq:sigDpos} thanks to the second condition in \ref{ass:coercive}\ref{ass:coercivefindim}.
\end{enumerate}
\end{proof}


\begin{lemma}\label{bounded}
Suppose that assumptions  \ref{ass:A1}-\ref{ass:compact} and \ref{ass:existence}-\ref{ass:coercive} and \ref{ass:P1}-\ref{ass:thetagamma} hold. Then the sequence of dual iterates $\seq{\mu_k}$ generated by \algref{alg:CGAL} is bounded.
\end{lemma}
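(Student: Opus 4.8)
The plan is to leverage the coercivity of $\bar\varphi$ on $\ran(A)$ obtained in \lemref{lem:coercive}, thereby reducing boundedness of $\seq{\mu_k}$ to a uniform upper bound on the scalar sequence $\seq{\bar\varphi(\mu_k)}$. First I would check that the dual iterates never leave $\ran(A)$: the input gives $\mu_0 \in \ran(A)$, Assumption~\ref{ass:existence} forces $b = A\xs \in \ran(A)$, and then an obvious induction on $\mu_{k+1} = \mu_k + \theta_k\para{Ax_{k+1}-b}$ (with $\theta_k>0$ by~\ref{ass:thetagamma}) yields $\mu_k \in \ran(A)$ for all $k$. Since \lemref{lem:coercive} amounts to saying that the sublevel sets of $\bar\varphi$ restricted to $\ran(A)$ are bounded, it then suffices to exhibit a constant $M$ with $\bar\varphi(\mu_k) \le M$ for all $k$.

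The crucial link is the inequality $\bar\varphi(\mu_k) \le -\minL_{k}\para{\mu_k}$, where $\minL_{k}\para{\mu_k} = \min_x \mc{L}_k\para{x,\mu_k}$ is exactly the quantity used in \lemref{lem:feassum} (the minimum being attained in $\C$ by~\ref{ass:A1}-\ref{ass:compact}). To obtain it, unfold the definition in~\eqref{notation}: for every $\mu$, $-\bar\varphi(\mu) = -\bar\Phi^{*}\para{-A^*\mu} - \scal{b}{\mu} = \inf_x\brac{\bar\Phi(x) + \scal{\mu}{Ax-b}}$. Now $g^{\beta_k}\le g$ by \propref{prop:moreau}\ref{moreauclaim4} and $\rho_k \le \rhosup$ by~\ref{ass:rhobound}, hence $\Phi_k \le \bar\Phi$ pointwise, so $\mc{L}_k\para{x,\mu} = \Phi_k(x) + \scal{\mu}{Ax-b} \le \bar\Phi(x) + \scal{\mu}{Ax-b}$ for every $x$; taking the infimum over $x$ gives $\minL_{k}\para{\mu} \le -\bar\varphi(\mu)$, i.e. $\bar\varphi(\mu) \le -\minL_{k}\para{\mu}$, for every $\mu$ (in particular $\mu=\mu_k$).

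It remains to bound $\minL_{k}\para{\mu_k}$ from below, and here \lemref{lem:feassum} does the work: it shows $\Delta_k = \Delta_k^p + \Delta_k^d$ converges, hence is bounded, say $\Delta_k \le M'$ for all $k$. Since $\Delta_k^p = \mc{L}_k\para{x_{k+1},\mu_k} - \minL_{k}\para{\mu_k} \ge 0$ and $\Delta_k^d = \minL - \minL_{k}\para{\mu_k}$, we get $\minL - \minL_{k}\para{\mu_k} = \Delta_k^d \le \Delta_k \le M'$, whence $\minL_{k}\para{\mu_k} \ge \minL - M'$. Plugging this into the previous step, $\bar\varphi(\mu_k) \le -\minL_{k}\para{\mu_k} \le M' - \minL$ for all $k$, which is the desired bound; combined with $\mu_k \in \ran(A)$ and the coercivity of $\bar\varphi$ on $\ran(A)$, this forces $\seq{\mu_k}$ to be bounded.

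The main difficulty is conceptual rather than computational: one has to identify $\minL_{k}\para{\mu_k}$ as the natural dual object controlled by the iteration, and then keep careful track of the directions of the inequalities tying together $\bar\varphi$, $\minL_k$, $\Delta_k^p$ and $\Delta_k^d$ — the smoothing and penalization help here precisely because they place $\Phi_k$ \emph{below} $\bar\Phi$. The remaining points (that $\mu_k$ never leaves $\ran(A)$, so coercivity on that subspace applies, and that $\minL_{k}\para{\mu_k}$ is a genuine finite minimum) are routine but should not be skipped, since $\bar\varphi$ is in general not coercive on all of $\HH_d$.
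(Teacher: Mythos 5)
Your proof is correct, and it takes a genuinely shorter route than the paper's. The paper proves boundedness of $\varphi_k(\mu_k)\eqdef -\minL_k(\mu_k)$ essentially from scratch: it identifies $\varphi_k$ with a Moreau envelope via Fenchel--Rockafellar duality, deduces that $\nabla\varphi_k$ is $1/\rhoinf$-Lipschitz with $\nabla\varphi_k(\mu)=A\minx-b$, applies the Descent Lemma in the dual together with the monotonicity $\varphi_{k+1}(\mu_{k+1})\le\varphi_k(\mu_{k+1})$, and then invokes \lemref{Alber2} with the summability results of \lemref{lem:feassum} to conclude that $\seq{\varphi_k(\mu_k)}$ converges. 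You instead observe that $\varphi_k(\mu_k)=\Delta_k^d-\minL\le\Delta_k-\minL$ (since $\Delta_k^p\ge 0$), so the upper bound on $\varphi_k(\mu_k)$ is already contained in the convergence/boundedness of $\Delta_k$ established in (the proof of) \lemref{lem:feassum}; from there both arguments coincide, passing through $\bar\varphi\le\varphi_k$ (your inequality $\bar\varphi(\mu_k)\le-\minL_k(\mu_k)$ is exactly the chain \eqref{unif_coerc}), the inclusion $\seq{\mu_k}\subset\ran(A)$, and the coercivity of $\bar\varphi$ on $\ran(A)$ from \lemref{lem:coercive}. What your shortcut loses is only material the paper reuses elsewhere: the explicit formula $\nabla\varphi_k(\mu_k)=A\minx_k-b$ and the proximal representation \eqref{eq:gradphik} are needed later in the Mosco-convergence argument for weak convergence of $\seq{\mu_k}$, so the heavier machinery is not wasted globally, but for boundedness alone your argument is cleaner. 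Two minor points: the boundedness of $\Delta_k$ is stated inside the proof of \lemref{lem:feassum} rather than in its statement (though it follows directly from the displayed estimate, the lower bound on $\Delta_k$, and \lemref{pre-Alb}); and your reliance on \lemref{lem:feassum} imports Assumption~\ref{ass:interior}, which is not listed among the hypotheses of \lemref{bounded} --- but the paper's own proof has exactly the same dependency, so this is a pre-existing imprecision, not a gap you introduced.
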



\begin{proof}
	Using the notation in \eqref{notation}, the primal problem:
	\begin{equation*}
	\begin{split}
	& \min_{x\in\HH_p} \left\{\Phi(x): \ \ Ax=b \right\}=\min_{x\in\HH_p} \sup_{\mu\in\HH_d}  \ \LL{x,\mu}  ,
	\end{split}
	\end{equation*}
	is obviously equivalent to
	\begin{equation*}
	\begin{split}
	& \min_{x\in\HH_p} \left\{\Phi(x)+\frac{\rho_k}{2}\norrm{Ax-b}^2: \ \ Ax=b \right\}= \min_{x\in\HH_p} \sup_{\mu\in\HH_d} \left\{ \LL{x,\mu} +\frac{\rho_k}{2}\norrm{Ax-b}^2 \right\}.
	\end{split}
	\end{equation*}
	We associate to the previous the following regularized primal problem:
	\begin{equation*}
	\begin{split}
	&\min_{x\in\HH_p} \left\{\Phi_k(x): \ \ Ax=b \right\}= \min_{x\in\HH_p} \sup_{\mu\in\HH_d} \ \Lk{x,\mu} 
	\end{split}
	\end{equation*}
	and its Lagrangian dual, namely:
	\begin{equation*}
	\begin{split}
	& \sup_{\mu\in\HH_d} \inf_{x\in\HH_p}  \  \Lk{x,\mu}=-\inf_{\mu\in\HH_d} \sup_{x\in\HH_p}  \  -\Lk{x,\mu}.
	\end{split}
	\end{equation*}
	Now consider the dual function in the latter, namely $\varphi_k(\mu) \eqdef -\inf_{x\in\HH_p}  \ \Lk{x,\mu}$. Observe that the minimum is actually attained owing to \ref{ass:A1} and \ref{ass:compact}.
Now we claim that $\varphi_k$ is continuously differentiable with $L_{\nabla\varphi_k}$-Lipschitz gradient, and $1/\rhoinf$ (see~\ref{ass:rhobound}) is an upper-bound for $\seq{L_{\nabla\varphi_k}}$. In order to show it, introduce the notation
	\begin{eqnarray*}
		F_k(x) &\eqdef & f(x)+g^{\beta_k}(Tx)+h(x);\\
		G_k(v) &\eqdef & \frac{\rho_k}{2}\norrm{v-b}^2.
	\end{eqnarray*}
By definition, we have
	\begin{equation}\label{eq:rockaf}
		\begin{split}
			\varphi_{k}(\mu)
			& = -\min_{x\in\HH_p} \left\{f(x)+g^{\beta_k}(Tx)+h(x)+\scal{\mu}{Ax-b}+\frac{\rho_k}{2}\norrm{Ax-b}^2\right\}\\
			& = -\min_{x\in\HH_p} \left\{F_k(x)+\scal{A^*\mu}{x}+G_k(Ax)\right\} + \scal{\mu}{b}.
		\end{split}
	\end{equation}
Using Fenchel-Rockafellar duality and strong duality, which holds by \ref{ass:rhobound} and continuity of $G_k$ (see, for instance, \cite[Theorem~3.51]{PEYPOU}), we have the following equality,
\newq{
\min_{x\in\HH_p}\brac{F_k\para{x} + \ip{A^*\mu, x}{} + G_k\para{Ax}} 	&= -\min\limits_{v\in\HH_d}\brac{ \para{F_k\para{\cdot} + \ip{A^*\mu, \cdot}{}}^*\para{-A^*v} + G_k^*\para{v}}\\
				&= - \min\limits_{v\in\HH_d}\brac{F_k^*\para{-A^*v-A^*\mu} + G_k^*\para{v}}
}
where we have used the fact that the conjugate of a linear perturbation is the translation of the conjugate in the last line. Substituting the above into \eqref{eq:rockaf} we find
	\begin{equation*}
		\begin{split}
		\varphi_{k}(\mu)	& = \min_{v\in\HH_d}  \left\{F_k^{*}(-A^*(v+\mu))+\frac{1}{2\rho_k}\norrm{v}^2 + \scal{v}{b} \right\} + \scal{\mu}{b} \\
			& = \min_{v\in\HH_d}  \left\{F_k^{*}(-A^*(v+\mu))+\frac{1}{2\rho_k}\norrm{v+\rho_k b}^2 \right\} + \scal{\mu}{b} - \frac{\rho_k}{2}\norrm{b}^2
		\end{split}
	\end{equation*}
 Moreover, from the primal-dual extremality relationships~\cite[Theorem~3.51(i)]{PEYPOU}, we have
	\begin{equation}\label{eq:extrel}
	-\minv = \nabla G_k(A\minx) = \rho_k\para{A\minx-b} ,
	\end{equation}
	where $\minx$ is a minimizer (which exists and belongs to $\C$) of the primal objective $\Lk{\cdot,\mu}$ and $\minv$ is the unique minimizer to the associated dual objective. Now, using the change of variable $u=v+\mu$, we get
	\begin{equation*}
		\begin{split}
			\varphi_{k}(\mu) & = \inf_{u\in\HH_d}  \left\{F_k^{*}(-A^*u)+\frac{1}{2\rho_k}\norrm{u-\mu+\rho_k b}^2 \right\} + \scal{\mu}{b} - \frac{\rho_k}{2}\norrm{b}^2\\
			& = \sbrac{F_k^{*}\circ(-A^*)}^{\rho_k}\para{\mu-\rho_k b} + \scal{\mu}{b} - \frac{\rho_k}{2}\norrm{b}^2,
		\end{split}
	\end{equation*}
	where the notation $\sbrac{\cdot}^{\rho_k}$ denotes the Moreau envelope with parameter $\rho_k$ as defined in \eqref{moreau_env}. It follows from \propref{prop:moreau}\ref{moreauclimconv} and \ref{moreauclaim2}, that $\varphi_{k}$ is convex, real-valued and its gradient, given by
	\nnewq{\label{eq:gradphik}
	\nabla \varphi_k(\mu) 	&= \rho_k^{-1}\para{\mu-\rho_k b - \minu} + b = \rho_k^{-1}\para{\mu - \minu} , \qwhereq \minu = \prox_{\rho_k F_k^{*}\circ(-A^*)}(\mu-\rho_k b),
	}
is $1/\rho_k$-Lipschitz continuous since the gradient of a Moreau envelope with parameter $\rho_k$ is $1/\rho_k$-Lipschitz continuous (see \propref{prop:moreau}\ref{moreauclaim2}). As $\rho_k$ is non-decreasing, $1/\rho_k\leq1/\rhoinf$ and the sequence of functions $\seq{\nabla\varphi_k}$ is uniformly Lipschitz-continuous with constant $1/\rhoinf$. In addition, combining~\eqref{eq:extrel} and~\eqref{eq:gradphik}, and recalling the change of variable $\minu=\minv+\mu$, we get that
	\nnewq{\label{eq:gradphikxk}
	\nabla \varphi_k(\mu) = \rho_k^{-1}(\mu-\minu) = -\rho_k^{-1}\minv = A\minx-b .
	}
	As in \lemref{lem:feassum}, we are going to denote $\minx_k$ a minimizer of $\Lk{x,\mu_k}$. Then, from the Descent Lemma (see Proposition \ref{prop:desclemma} and inequality \eqref{classicaldescent}), we have
	\begin{equation*}
	\begin{split}
	\varphi_k(\mu_{k+1}) & \leq \varphi_k(\mu_k)+\scal{\nabla \varphi_k(\mu_k)}{\mu_{k+1}-\mu_k}+\frac{1}{2\rhoinf}\norrm{\mu_{k+1}-\mu_k}^2.
	\end{split}
	\end{equation*}
	Now substitute in the right-hand-side the expression $\nabla \varphi_k(\mu_k)=A\minx_k-b$ in~\eqref{eq:gradphikxk} and the update $\mu_{k+1}=\mu_k+\theta_{k}\left(Ax_{k+1}-b\right)$ from the algorithm, to obtain\\
	\begin{equation}\label{eeeeppa}
	\begin{split}
	\varphi_k(\mu_{k+1})& \leq \varphi_k(\mu_k)+\theta_k\scal{A\minx_k-b}{Ax_{k+1}-b}+\frac{\theta_k^2}{2\rhoinf}\norrm{Ax_{k+1}-b}^2\\
	& \leq \varphi_k(\mu_k)+\frac{\theta_k}{2}\norrm{A\minx_k-b}^2+\frac{\theta_k}{2}\left(\frac{\theta_k}{\rhoinf}+1\right)\norrm{Ax_{k+1}-b}^2,
	\end{split}
	\end{equation}
	where we estimated the scalar product by Cauchy-Schwartz and Young inequality.
	Moreover, by definition, 
	\begin{equation}\label{prev}
	\begin{split}
	\varphi_{k+1}(\mu_{k+1})
	& = -\inf_{x\in\HH_p}  \left\{f(x)+g^{\beta_{k+1}}(Tx)+h(x)+\scal{\mu_{k+1}}{Ax-b}+\frac{\rho_{k+1}}{2}\norrm{Ax-b}^2\right\}\\
	& = \sup_{x\in\HH_p}  \left\{-\Lk{x,\mu_{k+1}}+\left[g^{\beta_{k}}-g^{\beta_{k+1}}\right](Tx)+\frac{1}{2}\left(\rho_k-\rho_{k+1}\right)\norrm{Ax-b}^2\right\}.
	\end{split}
	\end{equation}
	Now recall assumptions \ref{ass:beta} and \ref{ass:rhodec}: for $\beta_k$ non-increasing, $\left[g^{\beta_{k}}-g^{\beta_{k+1}}\right](Tx)\leq 0$ for every $x\in\HH_p$ by \propref{prop:moreau}\ref{moreauclaim4} and, for $\rho_k$ non-decreasing, $\rho_{k}-\rho_{k+1}\leq 0$. Then we can estimate the right-hand-side of \eqref{prev} to obtain
\begin{equation*}
	\begin{split}
		\varphi_{k+1}(\mu_{k+1}) \leq \sup_{x\in\HH_p}  \  -\Lk{x,\mu_{k+1}}= \varphi_{k}(\mu_{k+1}).
	\end{split}
\end{equation*}
	Sum \eqref{eeeeppa} with the latter, to obtain
	\begin{equation*}
	\begin{split}
	\varphi_{k+1}\left(\mu_{k+1}\right)-\varphi_{k}\left(\mu_{k}\right)& \leq\frac{\theta_k}{2}\norrm{A\minx_k-b}^2+\frac{\theta_k}{2}\left(\frac{\theta_k}{\rhoinf}+1\right)\norrm{Ax_{k+1}-b}^2.
	\end{split}
	\end{equation*}
	By Assumption \ref{ass:thetagamma}, $\theta_k=\gamma_k/c$ where $\gamma_k\leq 1$. Moreover, by assumption \ref{ass:gkgkp1}, $\gamma_{k}\leq \gamcon\gamma_{k+1}$. Then,
	\begin{equation}\label{final}
		\begin{split}
			\varphi_{k+1}\left(\mu_{k+1}\right)-\varphi_{k}\left(\mu_{k}\right)& \leq\frac{\gamma_k}{2c}\norrm{A\minx_k-b}^2+\frac{\gamcon}{2c}\left(\frac{1}{\rhoinf c}+1\right)\gamma_{k+1}\norrm{Ax_{k+1}-b}^2.
		\end{split}
	\end{equation}
	Notice that the right-hand-side is in $\ell^1_+$, because both $\seq{\gamma_{k}\norm{Ax_{k}-b}{}^2}$ and \linebreak $\seq{\gamma_{k}\norm{A\minx_{k}-b}{}^2}$ are in $\ell^1_+$ by \lemref{lem:feassum}. Additionally, $\seq{\varphi_k(\mu_k)}$ is bounded from below. Indeed, by virtue of \ref{ass:existence} and \remref{rem:assfun}\ref{rem:assfunproper}, we have
	\begin{equation*}
	\begin{split}
	\varphi_k(\mu_k) & \geq -\Lk{\xs,\mu_k}\\
	& \geq -\left[f(\xs)+g(T\xs)+h(\xs)\right] > -\infty .
	\end{split}
	\end{equation*}
	Then we can use \lemref{Alber2}\ref{alber2rk} on inequality \eqref{final} to conclude that $\seq{\varphi_{k}\para{\mu_{k}}}$ is convergent and, in particular, bounded. Now recall $\Phi_k$, $\bar{\Phi}$ and $\bar{\varphi}$ from~\eqref{notation}. Notice that
	\begin{equation*}
	\begin{split}
	\varphi_k(\mu) 
	& = \sup_{x\in\HH_p} \left\{\scal{\mu}{b-Ax}-\Phi_k(x)\right\}\\
	& = \sup_{x\in\HH_p} \left\{\scal{-A^*\mu}{x}-\Phi_k(x)\right\}+\scal{b}{\mu}\\
	& = \Phi_k^{*}\left(-A^*\mu\right)+\scal{b}{\mu} .
	\end{split}
	\end{equation*}
	It then follows that
	\begin{equation}\label{unif_coerc}
	\begin{split}
	& g^{\beta_k} \leq g \quad \implies \quad \Phi_k\leq\bar{\Phi} \quad \iff \quad  \bar{\Phi}^{*} \leq \Phi_k^{*} \quad \implies \quad \bar{\varphi} \leq \varphi_k,
	\end{split}
	\end{equation}
where we used \propref{prop:moreau}\ref{moreauclaim4} and the fact in \eqref{fact}.
	We are now in position to invoke \lemref{lem:coercive} which shows that $\bar{\varphi}$ is coercive on $\ran(A)$, and thus, by \eqref{unif_coerc}, $\seq{\varphi_k}$ is coercive uniformly in $k$ on $\ran(A)$. In turn, since $\ran(A)$ is closed and $\seq{\mu_k} \subset \ran(A)=\ker(A^*)^\bot$, we have from \eqref{unif_coerc} and the proof of \lemref{lem:coercive} that
	\[
	\exists (a > 0, \alpha > 0, \beta \in \R), (\forall k \in \N), \quad \varphi_k(\mu_k) \geq \bar{\varphi}(\mu_k) \geq a\norm{A^*\mu_k}{} + \beta \geq a\alpha\norm{\mu_k}{} + \beta ,
	\]
	which shows that $\seq{\mu_k}$ is indeed bounded by boundedness of $\seq{\varphi_{k}\para{\mu_k}}$.
\end{proof}
\subsection{Optimality}\label{sec:opt}

In this section we prove Theorem~\ref{convergence} by establishing convergence of the Lagrangian values to the optimum (i.e., the value at the saddle-point).


We start by showing some boundedness claims that will be important in our proof.

\begin{lemma}\label{lem:bndPhiMcstopt}
Under assumptions \ref{ass:A1}-\ref{ass:coercive} and \ref{ass:P1}-\ref{ass:thetagamma}, the objective $\Phi$ is bounded on $\C$, and thus
\begin{equation}\label{Mt}
\Mcstopt \eqdef \sup_{x\in\C} |\Phi(x)|+\sup_{k\in\N}\norrm{\mu_k} \left(\norrm{A} \ R+\norrm{b}\right) < \pinfty ,
\end{equation}
where we recall the radius $R$ from assumption \ref{ass:compact}.
\end{lemma}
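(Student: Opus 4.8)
The statement has two logically independent pieces: finiteness of $\sup_{x\in\C}|\Phi(x)|$ and finiteness of $\sup_{k}\norrm{\mu_k}$. The second piece is nothing but \lemref{bounded}, whose hypotheses (\ref{ass:A1}--\ref{ass:compact}, \ref{ass:existence}--\ref{ass:coercive}, \ref{ass:P1}--\ref{ass:thetagamma}) are all subsumed by the hypotheses assumed here, so $\seq{\mu_k}$ is bounded and that piece is done. The plan is therefore to concentrate on showing that $\Phi$ is bounded on $\C$; once this is established, the value of $\Mcstopt$ in the display is finite because it is the sum of these two finite quantities multiplied by constants.

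To bound $\Phi=f+g\circ T+h$ on $\C$, I would fix once and for all a point $x_0\in\C$; note $\C=\dom(h)$ is nonempty since $h$ is proper (\ref{ass:A1}), and $f(x_0),g(Tx_0),h(x_0)$ are all real because $f$ is differentiable on an open $\C_0\supseteq\C$ by \ref{ass:f}, because $Tx_0\in T\C\subset\dom(\p g)$ by \ref{ass:interior}, and because $x_0\in\dom(h)$. I would then control the oscillation of each of the three summands over $\C$ by a multiple of the diameter $\diam\le 2\Ccon$ (\ref{ass:compact}). For $f$: using $D\eqdef\sup_{x\in\C}\norrm{\nabla f(x)}<\pinfty$ (see~\eqref{D}) together with the gradient inequality for the convex function $f$ (\ref{ass:A1}) applied both at $x$ and at $x_0$, one gets $|f(x)-f(x_0)|\le D\,\diam$ for every $x\in\C$. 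For $g\circ T$: by \ref{ass:interior} there is a finite constant $\gcon$ with $\norrm{\sbrac{\p g(Tx)}^0}\le\gcon$ for all $x\in\C$; applying the subdifferential inequality \eqref{subdiff} for $g$ at $Tx$ and at $Tx_0$ with these minimal-norm subgradients and Cauchy--Schwarz gives $|g(Tx)-g(Tx_0)|\le\gcon\norrm{T}\,\diam$. For $h$: \ref{ass:lip} directly yields $|h(x)-h(x_0)|\le L_h\,\diam$. Summing, $\sup_{x\in\C}|\Phi(x)|\le |\Phi(x_0)|+2\Ccon\para{D+\gcon\norrm{T}+L_h}<\pinfty$.

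Combining this with the boundedness of $\seq{\mu_k}$ from \lemref{bounded} shows that every term in the definition of $\Mcstopt$ is finite, which is exactly the claim. I do not anticipate a genuine difficulty here; the only point requiring a little care is that $f$, being merely convex (not concave), is a priori only bounded from below on the weakly compact set $\C$, so its upper bound has to be extracted from the gradient inequality -- and this is precisely where the finiteness of $\sup_{x\in\C}\norrm{\nabla f(x)}$ coming from \ref{ass:f} (recorded in~\eqref{D}) is used. Everything else is Cauchy--Schwarz together with boundedness of $\C$.
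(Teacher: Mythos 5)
Your proposal is correct and follows essentially the same route as the paper: bound the oscillation of $f$, $g\circ T$ and $h$ over $\C$ separately via the gradient inequality with $D=\sup_{x\in\C}\norrm{\nabla f(x)}<\pinfty$, the subdifferential inequality with the uniform bound $\gcon$ on minimal-norm subgradients from \ref{ass:interior}, and the relative Lipschitz continuity from \ref{ass:lip}, then invoke \lemref{bounded} for the dual multipliers. The only cosmetic difference is that the paper anchors the estimates at the saddle point $\xs$ rather than at an arbitrary $x_0\in\C$.
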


\begin{proof}
By assumption~\ref{ass:interior}, $g$ is subdifferentiable at $Tx$ for any $x \in \C$. Thus convexity of $g$ implies that for any $x \in \C$
\nnewq{\label{eq:bndgC}
g(Tx) &\leq g(T\xs) + \ip{\sbrac{\p g\para{Tx}}^0,Tx-T\xs}{} \leq g(T\xs) + \norm{\sbrac{\p g\para{Tx}}^0}{}\norrm{T}\diam \\
g(Tx) &\geq g(T\xs) + \ip{\sbrac{\p g\para{T\xs}}^0,Tx-T\xs}{} \geq g(T\xs) - \norm{\sbrac{\p g\para{T\xs}}^0}{}\norrm{T}\diam .
}
From assumptions~\ref{ass:A1} and \ref{ass:f}, $f$ belongs to $\Gamma_0\para{\HH_p}$ and is differentiable on an open set $\C_0$ that contains $\C \subset \dom(f)$ (see \defref{def_smooth}). Thus the continuity set of $f$ contains $\C$, and it follows from \cite[Corollary~8.30(ii)]{BAUSCHCOMB} that $\C \subset \inte\para{\dom\para{f}}$. Consequently, arguing as in the proof of \lemref{lem:interior}, we deduce that 
\begin{equation}\label{D}
\sup\limits_{x\in \C}\norm{\nabla f\para{x}}{} < \pinfty.
\end{equation}
In turn, convexity entails that for any $x \in \C$
\nnewq{\label{eq:bndfC}
f(x) &\leq f(\xs) + \ip{\nabla f\para{x},x-\xs}{} \leq f(\xs) + \norm{\nabla f\para{x}}{}\diam, \\
f(x) &\geq f(\xs) + \ip{\nabla f\para{\xs},x-\xs}{} \geq f(\xs) - \norm{\nabla f\para{\xs}}{}\diam .
}
From assumption~\ref{ass:lip}, we also have for any $x \in \C$
\nnewq{\label{eq:bndhC}
h(\xs) - L_h\diam \leq h(x) \leq h(\xs) + L_h\diam .
}
Summing~\eqref{eq:bndgC}, \eqref{eq:bndfC} and \eqref{eq:bndhC}, using~\eqref{D} and assumption~\ref{ass:interior}, we get
\[
|\Phi(x)| \leq |\Phi(\xs)| + \para{L_h+\norrm{T}\sup_{x \in \C}\norm{\sbrac{\p g\para{Tx}}^0}{}+\sup_{x \in \C}\norm{\nabla f\para{x}}{}} .
\]
From Lemma \ref{bounded}, we know that the sequence of dual variables $\seq{\mu_k}$ is bounded which concludes the proof.
\end{proof}

Define $C_k\eqdef \frac{L_k}{2}\diam^2 + \diam\para{D + M\norrm{T} + L_h + \norrm{A}\ \norrm{\mus}}$, where $L_k$ is given in \eqref{Lk} and the constants $D$, $M$, and $L_h$ are as in \lemref{lemma:LkBound}. We then have the following lemma, in which we state the main energy estimation.
\begin{lemma}\label{estimate}
Suppose that assumptions \ref{ass:A1}-\ref{ass:coercive} and \ref{ass:P1}-\ref{ass:thetagamma} hold, with $\gamconinf \geq 1$. Consider the sequence of primal-dual iterates $\seq{(x_k,\mu_k)}$ generated by \algref{alg:CGAL} and $\left(\xs,\mus\right)$ a saddle-point point of the Lagrangian as in \eqref{saddle_point}.
Let
\begin{equation}\label{r}
r_k \eqdef (1-\gamma_k)\Lk{x_k,\mu_k}+\frac{c}{2}\norrm{\mu_{k}-\mus}^2+\frac{\beta_k}{2}\gcon^2+\gamma_k \Mcstopt.
	\end{equation}
	Then, we have the following energy estimate
	\begin{equation}\label{mainest}
	\begin{aligned}
	&r_{k+1}-r_k +\gamma_k \left[\LL{x_{k},\mus}-\LL{\xs,\mus}+\frac{\rho_k}{2}\norrm{Ax_{k}-b}^2\right]\leq\\ &\frac{1}{2}\left[\rho_{k+1}-\rho_{k}-\gamma_{k+1}\rho_{k+1}+\frac{2}{c}\gamma_k -\frac{\gamma_k^2}{c}\right]\norrm{Ax_{k+1}-b}^2
	+\frac{\gamma_k\beta_{k}}{2}\gcon^2 +K_{\left(F,\zeta,\C\right)}\zeta\left(\gamma_k\right)+C_k\gamma_k^2.
	\end{aligned}
	\end{equation}
\end{lemma}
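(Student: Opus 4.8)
The plan is to run one conditional-gradient step on the smoothed, penalized objective $\mc{E}_k\para{\cdot,\mu_k}$, superimpose the dual-ascent geometry of the multiplier update, and then telescope through the Lyapunov quantity $r_k$.

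First I would apply the Descent Lemma~\ref{DescLemma} to $\mc{E}_k\para{\cdot,\mu_k}$ between $x_k$ and $x_{k+1}$, add the convex term $h$, and use $x_{k+1}=\para{1-\gamma_k}x_k+\gamma_k s_k$ to obtain a descent inequality for $\mc{L}_k\para{\cdot,\mu_k}$ whose linear part is the Frank--Wolfe gap $\gamma_k\para{\scal{z_k}{x_k-s_k}+h(x_k)-h(s_k)}$. Since $s_k$ minimizes the linear oracle and $\xs\in\C=\dom(h)$, this gap dominates the same quantity with $s_k$ replaced by $\xs$; feeding that into the curvature lower bound of \lemref{lemma:lowerbound} at $\para{\xs,x_k}$ and using $A\xs=b$ yields
\[
\mc{L}_k\para{x_{k+1},\mu_k}\leq\para{1-\gamma_k}\mc{L}_k\para{x_k,\mu_k}+\gamma_k\mc{L}_k\para{\xs,\mu_k}-\gamma_k\tfrac{\rho_k}{2}\norrm{Ax_k-b}^2+K_{\para{F,\zeta,\C}}\zeta\para{\gamma_k}+\tfrac{L_k}{2}\gamma_k^2\diam^2 .
\]
Next I would trade the smoothed Lagrangian values for the true ones: $g^{\beta_k}\leq g$ (\propref{prop:moreau}\ref{moreauclaim4}) gives $\mc{L}_k\para{\xs,\mu_k}=f(\xs)+g^{\beta_k}\para{T\xs}+h(\xs)\leq\mc{L}\para{\xs,\mus}$, while the identity $\mc{L}_k\para{x_k,\mu_k}=\mc{L}\para{x_k,\mus}+[g^{\beta_k}-g]\para{Tx_k}+\scal{\mu_k-\mus}{Ax_k-b}+\tfrac{\rho_k}{2}\norrm{Ax_k-b}^2$ together with $0\leq\para{g-g^{\beta_k}}\para{Tx_k}\leq\tfrac{\beta_k}{2}\gcon^2$ (\propref{prop:moreau}\ref{moreauclaim3}--\ref{moreauclaim4} and~\eqref{MMM}) produces the $\tfrac{\gamma_k\beta_k}{2}\gcon^2$ term in~\eqref{mainest} and isolates the cross term $-\gamma_k\scal{\mu_k-\mus}{Ax_k-b}$.

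I would then bring in the multiplier update. Expanding $\tfrac{c}{2}\norrm{\mu_{k+1}-\mus}^2-\tfrac{c}{2}\norrm{\mu_k-\mus}^2$ with $\mu_{k+1}-\mu_k=\theta_k\para{Ax_{k+1}-b}$ and $\theta_k=\gamma_k/c$ gives $\gamma_k\scal{Ax_{k+1}-b}{\mu_k-\mus}+\tfrac{\gamma_k^2}{2c}\norrm{Ax_{k+1}-b}^2$, and rewriting $\mc{L}_{k+1}\para{x_{k+1},\mu_{k+1}}$ (the term entering $r_{k+1}$) through $\mc{L}_k\para{x_{k+1},\mu_k}$ adds $\theta_k\norrm{Ax_{k+1}-b}^2$, $\tfrac{\rho_{k+1}-\rho_k}{2}\norrm{Ax_{k+1}-b}^2$, and $[g^{\beta_{k+1}}-g^{\beta_k}]\para{Tx_{k+1}}\leq\tfrac{\beta_k-\beta_{k+1}}{2}\gcon^2$, the last telescoping against the $\tfrac{\beta_k}{2}\gcon^2$ terms inside $r_k$ and $r_{k+1}$. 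The two cross terms combine after substituting $Ax_k-b=\para{Ax_{k+1}-b}-\gamma_k A\para{s_k-x_k}$, leaving a residual $\gamma_k^2\scal{\mu_k-\mus}{A\para{s_k-x_k}}$ that is $O\para{\gamma_k^2}$ by boundedness of $\seq{\mu_k}$ (\lemref{bounded}) and of $\C$; a further use of \lemref{lemma:LkBound} is what replaces the value at $x_{k+1}$ by the required value at $x_k$, at cost $\gamma_k^2\diam\para{D+\gcon\norrm{T}+L_h+\norrm{A}\norrm{\mus}}$, and together these $O\para{\gamma_k^2}$ contributions are exactly what $C_k\gamma_k^2$ collects. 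The convex-combination weights $1-\gamma_k$, $1-\gamma_{k+1}$ and the $\gamma_k\Mcstopt$ correction in $r_k$ are reconciled via the uniform boundedness of $\Phi$ on $\C$ and of $\seq{\mu_k}$ (\lemref{lem:bndPhiMcstopt}, \lemref{bounded}); gathering every multiple of $\norrm{Ax_{k+1}-b}^2$ then produces precisely the bracket $\tfrac12\para{\rho_{k+1}-\rho_k-\gamma_{k+1}\rho_{k+1}+\tfrac{2}{c}\gamma_k-\tfrac{\gamma_k^2}{c}}$, with $-\tfrac12\gamma_{k+1}\rho_{k+1}$ coming from $\para{1-\gamma_{k+1}}\tfrac{\rho_{k+1}}{2}\norrm{Ax_{k+1}-b}^2$ inside $r_{k+1}$ and the $\tfrac2c\gamma_k$ from $\theta_k$ entering both the $\mu_k\mapsto\mu_{k+1}$ shift and the Pythagoras expansion.

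The hard part is not any single estimate but the bookkeeping: keeping straight which Lagrangian value sits at which point, which multiplier, and which smoothing/penalty index; reconciling the feasibility residual $Ax_k-b$ generated by the Frank--Wolfe step with $Ax_{k+1}-b$ generated by the dual step while paying only $O\para{\gamma_k^2}$; and keeping the weights $1-\gamma_k$, $1-\gamma_{k+1}$ consistent — which is exactly where the a priori bounds of \lemref{bounded} and \lemref{lem:bndPhiMcstopt}, and the auxiliary Lemmas~\ref{lemma:LkBound} and~\ref{lem:feasgammabound}, enter.
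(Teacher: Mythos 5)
Your proposal follows the same architecture as the paper's proof: a descent step on $\mc{E}_k\para{\cdot,\mu_k}$ via \lemref{DescLemma} combined with convexity of $h$, the oracle optimality of $s_k$ tested against $\xs$ and fed into \lemref{lemma:lowerbound} to produce the $-\gamma_k\tfrac{\rho_k}{2}\norrm{Ax_k-b}^2$ term, the quadratic geometry of the multiplier update, the $k\to k+1$ index shifts in $\beta_k,\rho_k,\gamma_k$ controlled by \propref{prop:moreau}\ref{moreauclaim4}, \eqref{MMM} and $\Mcstopt$, and the final swap $\LL{x_{k+1},\mus}\to\LL{x_{k},\mus}$ via \lemref{lemma:LkBound}. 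The one genuine divergence is how the dual step is folded in, and it is exactly there that your constants drift from the statement. The paper writes the update as a proximal step and uses the three-point identity \eqref{non-exp}, which anchors the cross term at $\mu_{k+1}$: this yields $\gamma_k\Lk{x_{k+1},\mu_{k+1}}$ (forming the $(1-\gamma_k)$ weight exactly), the gap term $-\gamma_k\Lk{x_{k+1},\mus}$, and a \emph{negative} quadratic $-\tfrac{\gamma_k^2}{2c}\norrm{Ax_{k+1}-b}^2$. You instead expand the square anchored at $\mu_k$, getting $\gamma_k\scal{\mu_k-\mus}{Ax_{k+1}-b}+\tfrac{\gamma_k^2}{2c}\norrm{Ax_{k+1}-b}^2$, and cancel the cross term against $-\gamma_k\scal{\mu_k-\mus}{Ax_k-b}$ from your identity for $\mc{L}_k\para{x_k,\mu_k}$. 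This closes, but not with the stated constants: (i) the quadratic contribution enters with a $+$ sign, so your bracket is $\tfrac12\left[\cdots+\tfrac{2}{c}\gamma_k+\tfrac{\gamma_k^2}{c}\right]$ rather than ``precisely'' the one in \eqref{mainest}; (ii) the residual $\gamma_k^2\scal{\mu_k-\mus}{A\para{s_k-x_k}}$ is bounded by $\gamma_k^2\norrm{A}\diam\sup_j\norrm{\mu_j-\mus}$, finite by \lemref{bounded} but not covered by the stated $C_k$, which only carries $\norrm{\mus}$. Both discrepancies are harmless downstream (only summability of the remainders is used), but to reproduce \eqref{mainest} verbatim you should anchor the cross term at $\mu_{k+1}$ as the paper does. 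A last small point: your route produces $-\gamma_k\tfrac{\rho_k}{2}\norrm{Ax_k-b}^2$ twice, once from \lemref{lemma:lowerbound} and once from your identity; the extra copy has a favorable sign and can simply be discarded.
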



\begin{proof}
	Notice that the dual update $\mu_{k+1} = \mu_k + \theta_k \para{Ax_{k+1}-b}$ can be re-written as
	\begin{equation*}
	\ens{\mu_{k+1}} = \Argmin_{\mu\in \HH_d} \ \left\{ -\Lk{x_{k+1},\mu} + \frac{1}{2\theta_k}\norrm{\mu-\mu_k}^2\right\}.
	\end{equation*}
	Then, from firm nonexpansiveness of the proximal mapping (see \eqref{non-exp}),
	\begin{equation}\label{prox_mu}
	\begin{split}
	0&\geq\theta_k\left[\Lk{x_{k+1},\mus}-\Lk{x_{k+1},\mu_{k+1}}\right] + \frac{1}{2}\big[\norrm{\mu_{k+1}-\mus}^2-\norrm{\mu_k-\mus}^2\\
	 & \qquad\qquad +\norrm{\mu_{k+1}-\mu_k}^2\big]\\
	&=\theta_k\left[\Lk{x_{k+1},\mus}-\Lk{x_{k+1},\mu_{k+1}}\right] + \frac{1}{2}\left[\norrm{\mu_{k+1}-\mus}^2-\norrm{\mu_k-\mus}^2\right] \\
	&\qquad\qquad +\frac{\theta_k^2}{2}\norrm{Ax_{k+1}-b}^2.
	\end{split}
	\end{equation}
	Notice that 
	\begin{equation*}
	\begin{split}
	\Lk{x_{k+1},\mu_{k}}- \Lk{x_k,\mu_k}=
	\left[\Ek{x_{k+1},\mu_{k}}+h(x_{k+1})\right]- \left[\Ek{x_k,\mu_k}+h(x_k)\right]
	\end{split}
	\end{equation*}
	and that, by the definition of $x_{k+1}$ in the algorithm and by convexity of function $h$,
	\begin{eqnarray*}
		h(x_{k+1})-h(x_k)&=&h(\left(1-\gamma_k\right)x_k+\gamma_k s_k)-h(x_k)\\
		&\leq&  \gamma_k \left(h(s_k)-h(x_k)\right).
	\end{eqnarray*}
Then,
\begin{equation}\label{lab}
	\begin{split}
		\Lk{x_{k+1},\mu_{k}}- \Lk{x_k,\mu_k}\leq
		\Ek{x_{k+1},\mu_{k}}-\Ek{x_k,\mu_k}+\gamma_k \left(h(s_k)-h(x_k)\right).
	\end{split}
\end{equation}
	Now apply \lemref{lemma:lowerbound} at the points $\xs$, $x_k$, and $\mu_k$ to affirm that
\begin{equation*}
\begin{split}
\Ek{\xs,\mu_k} & \geq \Ek{x_k,\mu_k} + \scal{\nabla_x \Ek{x_k,\mu_k}}{\xs-x_k}+\frac{\rho_k}{2}\norrm{A(\xs-x_k)}^2.
\end{split}
\end{equation*}
From the latter, by the alternative definition of $s_k$ in the algorithm (see \eqref{sk}), we obtain
\begin{equation}\label{strong_conv}
\begin{split}
\Ek{\xs,\mu_k}
& \geq \Ek{x_k,\mu_k} -h(\xs)+ h(s_k)+ \scal{\nabla_x \Ek{x_k,\mu_k}}{s_k-x_k}+\frac{\rho_k}{2}\norrm{Ax_k-b}^2.
\end{split}
\end{equation}
From Lemma \ref{DescLemma}, we have also that
\begin{equation*}
	\begin{split}
		\Ek{x_{k+1},\mu_k} & \leq \Ek{x_k,\mu_k} + \scal{\nabla_x \Ek{x_k,\mu_k}}{x_{k+1}-x_k} +  K_{\left(F,\zeta,\C\right)}\zeta\left(\gamma_k\right) + \frac{L_k}{2}\norrm{x_{k+1}-x_k}^2. 
	\end{split}
\end{equation*}
Recall that, from the algorithm, $x_{k+1}=x_k+\gamma_k\left(s_k-x_k\right)$. Then,
\begin{equation*}
	\begin{split}
		\Ek{x_{k+1},\mu_{k}}&\leq \Ek{x_k,\mu_k} + \gamma_k \scal{\nabla_x \Ek{x_k,\mu_k}}{s_k-x_k} +K_{\left(F,\zeta,\C\right)}\zeta\left(\gamma_k\right)+\frac{L_k\gamma_k^2}{2}\norrm{s_{k}-x_k}^2\\
		&\leq \Ek{x_k,\mu_k}+\gamma_k \left[\Ek{\xs,\mu_k}+h(\xs)-\Ek{x_k,\mu_k}-h(s_k)-\frac{\rho_k}{2}\norrm{Ax_k-b}^2\right] \\
		& \ \ \ +K_{\left(F,\zeta,\C\right)}\zeta\left(\gamma_k\right)+\frac{L_k}{2}\diam^2\gamma_k^2, \label{e}
	\end{split}
\end{equation*}
where in the last inequality we used \eqref{strong_conv}.
	Using the latter in \eqref{lab}, we obtain
	\begin{equation}\label{labb}
		\begin{split} \Lk{x_{k+1},\mu_{k}}-\Lk{x_k,\mu_k}	\leq & \gamma_k \left[\Lk{\xs,\mu_k}-\Lk{x_k,\mu_k}-\frac{\rho_k}{2}\norrm{Ax_{k}-b}^2\right]\\ &+K_{\left(F,\zeta,\C\right)}\zeta\left(\gamma_k\right)+\frac{L_k}{2}\diam^2\gamma_k^2.
		\end{split}
	\end{equation}
	Notice also that, from the definitions of $\Lk{x_{k+1},\cdot}$ and $\mu_{k+1}$ as $\mu_{k+1} = \mu_k + \theta_k\para{Ax_{k+1}-b}$,
	\begin{equation*}
	\Lk{x_{k+1},\mu_{k+1}}-\Lk{x_{k+1},\mu_{k}}=\scal{\mu_{k+1}-\mu_k}{Ax_{k+1}-b}=\theta_k\norrm{Ax_{k+1}-b}^2.
	\end{equation*}
	So, from the latter and \eqref{labb},
	\newqml{\label{estt}
		\Lk{x_{k+1},\mu_{k+1}} -\Lk{x_k,\mu_k}
		\leq\theta_k\norrm{Ax_{k+1}-b}^2+\gamma_k \left[\Lk{\xs,\mu_k}-\Lk{x_k,\mu_k}\right] \\
		-\frac{\rho_k\gamma_k}{2}\norrm{Ax_{k}-b}^2 + K_{\left(F,\zeta,\C\right)}\zeta\left(\gamma_k\right)+\frac{L_k}{2}\diam^2\gamma_k^2.
	}
	Now recall that, by assumption \ref{ass:thetagamma}, $\theta_k =  \gamma_k/c$. Multiply \eqref{prox_mu} by $c$ and sum with the latter, to obtain
	\begin{eqnarray*}
		& (1-c\theta_k)\Lk{x_{k+1},\mu_{k+1}} -(1-c\theta_k)\Lk{x_k,\mu_k} + \frac{c}{2}\left[\norrm{\mu_{k+1}-\mus}^2-\norrm{\mu_k-\mus}^2\right] \\
		& \leq \ \ \left(\theta_k -\frac{c\theta_k^2}{2}\right)\norrm{Ax_{k+1}-b}^2+\gamma_k \left[\Lk{\xs,\mu_k}-\Lk{x_k,\mu_k}\right]-c\theta_k \left[\Lk{x_{k+1},\mus}-\Lk{x_k,\mu_k}\right] \\
		& \ \ \ -\frac{\rho_k\gamma_k}{2}\norrm{Ax_{k}-b}^2+K_{\left(F,\zeta,\C\right)}\zeta\left(\gamma_k\right)+\frac{L_k}{2}\diam^2\gamma_k^2.
	\end{eqnarray*}
	The previous inequality can be re-written, by trivial manipulations, as
	\begin{equation}\label{est}
	\begin{split}
	& (1-c\theta_{k+1})\Lkp{x_{k+1},\mu_{k+1}} -(1-c\theta_k)\Lk{x_k,\mu_k} + \frac{c}{2}\left[\norrm{\mu_{k+1}-\mus}^2-\norrm{\mu_k-\mus}^2\right]\\
	\leq & \ \ (1-c\theta_{k+1})\Lkp{x_{k+1},\mu_{k+1}}-(1-c\theta_{k})\Lk{x_{k+1},\mu_{k+1}}+\left(\theta_k -\frac{c\theta_k^2}{2}\right)\norrm{Ax_{k+1}-b}^2\\
	& \ +\gamma_k \left[\Lk{\xs,\mu_k}-\Lk{x_k,\mu_k}\right]-c\theta_k \left[\Lk{x_{k+1},\mus}-\Lk{x_k,\mu_k}\right]-\frac{\rho_k\gamma_k}{2}\norrm{Ax_{k}-b}^2 \\
	& \ +K_{\left(F,\zeta,\C\right)}\zeta\left(\gamma_k\right)+\frac{L_k}{2}\diam^2\gamma_k^2\\
	=&\ \ c\left(\theta_k-\theta_{k+1}\right)\left[f+h+\scal{\mu_{k+1}}{A\cdot-b}\right](x_{k+1})+\left[\left(1-c\theta_{k+1}\right)g^{\beta_{k+1}}-\left(1-c\theta_k\right)g^{\beta_k}\right]\left(Tx_{k+1}\right)\\
	& \ + \frac{1}{2}\left[\left(1-c\theta_{k+1}\right)\rho_{k+1}-\left(1-c\theta_{k}\right)\rho_{k}+2\theta_k -c\theta_k^2\right]\norrm{Ax_{k+1}-b}^2\\
	& \ +\gamma_k \left[\Lk{\xs,\mu_k}-\Lk{x_k,\mu_k}\right]-c\theta_k \left[\Lk{x_{k+1},\mus}-\Lk{x_k,\mu_k}\right]-\frac{\rho_k\gamma_k}{2}\norrm{Ax_{k}-b}^2 \\
	& \ +K_{\left(F,\zeta,\C\right)}\zeta\left(\gamma_k\right)+\frac{L_k}{2}\diam^2\gamma_k^2.
	\end{split}
\end{equation}
By \ref{ass:gkgkp1} and \ref{ass:thetagamma}, and the assumption that $\gamconinf \geq 1$, we have $\theta_{k+1} \leq \gamconinf^{-1}\theta_k \leq \theta_k$. In view of \ref{ass:beta}, we also have $\beta_{k+1}\leq\beta_{k}$ by \ref{ass:beta}. In particular, $g^{\beta_{k}}\leq g^{\beta_{k+1}}\leq g$. Now, by Proposition \ref{prop:moreau}\ref{moreauclaim3} and the definition of the constant $\gcon$ in \eqref{MMM}, we are able to estimate the quantity
	\begin{equation*}
	\begin{split}	
	&\left[\left(1-c\theta_{k+1}\right)g^{\beta_{k+1}}-\left(1-c\theta_k\right)g^{\beta_k}\right]\left(Tx_{k+1}\right)\\
	&=\left[g^{\beta_{k+1}}-g^{\beta_k}\right]\left(Tx_{k+1}\right)+c\left[\theta_kg^{\beta_k}-\theta_{k+1}g^{\beta_{k+1}}\right]\left(Tx_{k+1}\right)\\
	&\leq \frac{1}{2}\left(\beta_k-\beta_{k+1}\right)\norrm{\left[\partial g(Tx_{k+1})\right]^0}^2+c\left[\theta_kg^{\beta_k}-\theta_{k+1}g^{\beta_{k}}\right]\left(Tx_{k+1}\right)\\
	&\leq \frac{1}{2}\left(\beta_k-\beta_{k+1}\right)\gcon^2+c\left(\theta_k-\theta_{k+1}\right)g(Tx_{k+1}).
	\end{split}
	\end{equation*}
	Then,
	\begin{equation}\label{est2}
	\begin{split}
	& \ \ c\left(\theta_k-\theta_{k+1}\right)\left[f+h+\scal{\mu_{k+1}}{A\cdot-b}\right](x_{k+1})+\left[\left(1-c\theta_{k+1}\right)g^{\beta_{k+1}}-\left(1-c\theta_k\right)g^{\beta_k}\right]\left(Tx_{k+1}\right)\\
	\leq &\ \ c\left(\theta_k-\theta_{k+1}\right)\LL{x_{k+1},\mu_{k+1}}+\frac{1}{2}\left(\beta_k-\beta_{k+1}\right)\gcon^2.
	\end{split}
	\end{equation}
	
	Recall that, by assumption \ref{ass:compact}, $\C$ is convex and bounded and that, by the update $x_{k+1}=x_{k}+\gamma_{k}\left(s_k-x_k\right)$ with $s_k\in \C$ and $\gamma_{k}\in]0,1]$ by \ref{ass:P1}, $x_k$ always belongs to $\C$. From the assumptions, the functions $f,h$ and $g\circ T$ are bounded on $\C$ and, from the algorithm and convexity, $\seq{x_{k}}\subset\C$. By Lemma \ref{bounded}, also the sequence $\seq{\mu_k}$ is bounded. Then, recalling $\Mcstopt$ from \lemref{lem:bndPhiMcstopt}, we can use the Cauchy-Schwartz and the triangular inequality to affirm that
	\begin{equation}\label{multt}
	\begin{split}
	\LL{x_{k},\mu_{k}} = \Phi(x_k)+\scal{\mu_k}{Ax_k-b}
	\leq\Mcstopt.
	\end{split}
	\end{equation}
	Recall the definition of $r_k$ in \eqref{r}. Coming back to \eqref{est} and using both \eqref{est2} and \eqref{multt}, we obtain
	\begin{equation}\label{est3}
	\begin{split}
	r_{k+1}-r_k
	\ & \leq \ \frac{1}{2}\left[\left(1-\gamma_{k+1}\right)\rho_{k+1}-\left(1-\gamma_{k}\right)\rho_{k}+\frac{2}{c}\gamma_k -\frac{\gamma_k^2}{c}\right]\norrm{Ax_{k+1}-b}^2\\
	& \ \  +\gamma_k \left[\Lk{\xs,\mu_k}-\Lk{x_{k+1},\mus}\right]-\frac{\rho_k\gamma_k}{2}\norrm{Ax_{k}-b}^2 +K_{\left(F,\zeta,\C\right)}\zeta\left(\gamma_k\right)+\frac{L_k}{2}\diam^2\gamma_k^2.
	\end{split}
	\end{equation}
	Recall that, by feasibility of $\xs$, $\LL{\xs,\mu_k}=\LL{\xs,\mus}$. Now compute
	\begin{equation*}
	\begin{split}
	\Lk{\xs,\mu_k}-\Lk{x_{k+1},\mus} & = \LL{\xs,\mu_k}-\LL{x_{k+1},\mus}+\left[g^{\beta_k}-g\right](T\xs)+ \left[g-g^{\beta_{k}}\right](Tx_{k+1})\\
	&\quad\quad-\frac{\rho_k}{2}\norrm{Ax_{k+1}-b}^2\\
	&\leq \LL{\xs,\mus}-\LL{x_{k+1},\mus}+\frac{\beta_{k}}{2}M^2-\frac{\rho_k}{2}\norrm{Ax_{k+1}-b}^2,
	\end{split}
	\end{equation*}
	where in the inequality we used the facts that $g^{\beta_k}\leq g$ and that, by Proposition \ref{prop:moreau}\ref{moreauclaim4} and \eqref{MMM}, 
	$$\left[g-g^{\beta_{k}}\right](Tx_{k+1}) \leq \frac{\beta_{k}}{2}\norrm{\left[\partial g(Tx_{k+1})\right]^0}^2 \leq\frac{\beta_{k}}{2}M^2.$$
	Then, using the latter in \eqref{est3}, we obtain
	\begin{multline*}
		r_{k+1}-r_k
		\leq \frac{1}{2}\left[\rho_{k+1}-\rho_{k}-\gamma_{k+1}\rho_{k+1}+\frac{2}{c}\gamma_k -\frac{\gamma_k^2}{c}\right]\norrm{Ax_{k+1}-b}^2 +\gamma_k \left[\LL{\xs,\mus}-\LL{x_{k+1},\mus}\right]\\
		\quad+\frac{\gamma_k\beta_{k}}{2}M^2-\frac{\rho_k\gamma_k}{2}\norrm{Ax_{k}-b}^2 +K_{\left(F,\zeta,\C\right)}\zeta\left(\gamma_k\right)+\frac{L_k}{2}\diam^2\gamma_k^2.
	\end{multline*}
	We replace the term $\left[\LL{\xs,\mus}-\LL{x_{k+1},\mus}\right]$ with $\left[\LL{\xs,\mus}-\LL{x_{k},\mus}\right]+\left[\LL{x_k,\mus}-\LL{x_{k+1},\mus}\right]$ and estimate using \lemref{lemma:LkBound} to get the following,
	\begin{multline*}
		r_{k+1}-r_k
		\leq \frac{1}{2}\left[\rho_{k+1}-\rho_{k}-\gamma_{k+1}\rho_{k+1}+\frac{2}{c}\gamma_k -\frac{\gamma_k^2}{c}\right]\norrm{Ax_{k+1}-b}^2 +\gamma_k \left[\LL{\xs,\mus}-\LL{x_{k},\mus}\right]\\
		+\frac{\gamma_k\beta_{k}}{2}M^2-\frac{\rho_k\gamma_k}{2}\norrm{Ax_{k}-b}^2 +K_{\left(F,\zeta,\C\right)}\zeta\left(\gamma_k\right)+C_k\gamma_k^2.
	\end{multline*}
	We conclude by trivial manipulations.
\end{proof}

We are now ready to prove Theorem~\ref{convergence}.
\begin{proof}
Our starting point is the main energy estimate~\eqref{mainest}. Let us focus on its right-hand-side. Under assumption \ref{ass:cond}, 
\newq{
\frac{1}{2}\left[\rho_{k+1}-\rho_{k}-\gamma_{k+1}\rho_{k+1}+\frac{2}{c}\gamma_k -\frac{\gamma_k^2}{c}\right]\norrm{Ax_{k+1}-b}^2\leq \gamma_{k+1}\norrm{Ax_{k+1}-b}^2,
}
	where the right hand side is in $\ell^1_+$ by \lemref{lem:feassum}.
	Now remember that $C_k=\frac{L_k}{2}\diam^2 + \diam\para{D + M\norrm{T} + L_h + \norrm{A}\ \norrm{\mus}}$, where $L_k=\norrm{T}^2/\beta_k+\norrm{A}^2\rho_k$. Then we have
	\newqml{
	\gamma_k\beta_{k}M^2/2 +K_{\left(F,\zeta,\C\right)}\zeta\left(\gamma_k\right)+C_k\gamma_k^2
	= \gamma_k\beta_{k}M^2 /2 +K_{\left(F,\zeta,\C\right)}\zeta\left(\gamma_k\right) +\norrm{T}^2\gamma_k^2\diam /\left(2\beta_k\right)+\norrm{A}^2\rho_k\gamma_k^2\diam /2\\
	+ \diam\para{D + M\norrm{T} + L_h + \norrm{A}\ \norrm{\mus}}\gamma_k^2 \in \ell_+^1.
	}
	Indeed, under assumption \ref{ass:P1}, the sequences $\seq{\gamma_k\beta_k}, \seq{\zeta\left(\gamma_k\right)}$, and $\seq{\gamma_k^2/\beta_k}$ belong to $\ell_+^1$. Moreover, we have by assumptions \ref{ass:beta} and \ref{ass:rhodec} that $\rhoinf\gamma_k^2\leq \rho_k\gamma_k^2 \leq \beta_{0}\rhosup\gamma_k^2/\beta_k$, whence we get that $\seq{\rho_k\gamma_k^2} \in \ell_+^1$ and $\seq{\gamma_k^2}\in\ell_+^1$ after invoking assumption~\ref{ass:P1}. Thus all terms on the right hand side are summable. Let 
	\newq{
	w_k &\eqdef \left[\LL{x_{k},\mus}-\LL{\xs,\mus}\right]+\frac{\rho_k}{2}\norrm{Ax_{k}-b}^2 \\
	z_k &\eqdef \gamma_{k+1}\norrm{Ax_{k+1}-b}^2 + \gamma_k\beta_{k}M^2/2 +K_{\left(F,\zeta,\C\right)}\zeta\left(\gamma_k\right)+C_k\gamma_k^2 .
	}
	So far, we have shown that
	\nnewq{\label{eq:mainenerseq}
	r_{k+1} \leq r_k - \gamma_k w_k + z_k ,
	}
	where $r_k$ is bounded from below, and $\seq{z_k} \in \ell_+^1$. The rest of the proof consists of invoking properly \lemref{Alber2}.
	
	\begin{enumerate}[label=(\roman*)]
	\item In order to use \lemref{Alber2}\ref{alber2wk}, we need to show that for some positive constant $\alpha$,
	\newq{
		w_k-w_{k+1}\leq \alpha \gamma_k .
	}	
	Notice that the term $ \LL{x_{k},\mus}-\LL{\xs,\mus}$ is proportional to $\gamma_k$ by \lemref{lemma:LkBound}.
	For the second term of $w_k$, we have by \lemref{lem:feasgammabound} that $\frac{\rho_k}{2}\norm{Ax_k-b}{}^2 - \frac{\rho_{k+1}}{2}\norm{Ax_{k+1}-b}{}^2$ is proportional to $\gamma_k$. The desired claim then follows from \lemref{Alber2}\ref{alber2wk}.
	
	\item By \cite[Lemma~2.37]{BAUSCHCOMB}, we can assert that $\seq{x_k}$ possesses a weakly convergent subsequence, say $\subseq{x_{k_j}}$, with cluster point $\bar{x}\in \C$. Since $\norm{A\cdot-b}{} \in \Gamma_0(\HH_p)$ and in view of \cite[Theorem~9.1]{BAUSCHCOMB}, we have 
\newq{
\norm{A\bar{x}-b}{} \leq \liminf_j\norm{A x_{k_j}-b}{} = \lim_k\norm{A x_k-b}{} = 0,
}
where we used lower semicontinuity of the norm and \thmref{convergence}. Thus $A\bar{x}=0$, meaning that $\bar{x}$ is a feasible point of \eqref{PProb}. In turn, $\mc{L}\para{\bar{x}, \mus} = \Phi(\bar{x})$. The function $\mc{L}\para{\cdot,\mus}$ is lower semicontinuous by \ref{ass:A1} and \ref{ass:existence}. Thus, using \cite[Theorem~9.1]{BAUSCHCOMB} and by virtue of claim (i), we have
\newq{
\Phi(\bar{x}) = \mc{L}\para{\bar{x},\mus}\leq\liminf_j\mc{L}\para{x_{k_j},\mus}=\lim_k \mc{L}\para{x_{k},\mus}=\mc{L}\para{\xs,\mus}\leq \mc{L}\para{x,\mus}
}
for all $x\in\HH_p$, and in particular for all $x \in A^{-1}(b)$. Thus, for every $x \in A^{-1}(b)$, we deduce that
\newq{
\Phi(\bar{x}) \leq \mc{L}\para{x,\mus} = \Phi(x) ,
}
meaning that $\bar{x}$ is a solution for problem \eqref{PProb}.\\

Meanwhile, as the sequence $\seq{\mu_k}$ is bounded by \lemref{bounded}, we can again invoke \cite[Lemma~2.37]{BAUSCHCOMB} to extract a weakly convergent subsequence $\subseq{\mu_{k_j}}$ with cluster point $\bar{\mu}$. By Fermat's rule (\cite[Theorem~16.2]{BAUSCHCOMB}), the weak  sequential cluster point $\bar{\mu}$ is a solution to \eqref{DProb} if and only if 
	\[
	0\in\p \para{\Phi^*\circ \para{-A^*}}\para{\bar{\mu}} + b . 
	\]
	Since the proximal operator is the resolvent of the subdifferential, it follows that \eqref{eq:gradphik} is equivalent to
	\nnewq{\label{eq:monincprox}
	\nabla\varphi_{k_j}\para{\mu_{k_j}}-b &\in \p\para{\Phi_{k_j}^*\circ\para{-A^*}}\para{\mu_{k_j} - \rho_{k_j}\nabla\varphi_{k_j}\para{\mu_{k_j}}} .
	}
	By \lemref{lem:feassum} it follows that $A\tilde x_k$ converges strongly to $b$ and, combined with \eqref{eq:gradphikxk}, thus $\nabla\varphi_{k_j}\para{\mu_{k_j}}$ converges strongly to $0$. On the other hand, $\mu_{k_j} - \rho_{k_j}\nabla\varphi_{k_j}\para{\mu_{k_j}}$ converges weakly to $\bar{\mu}$. We now argue that we can pass to the limit in \eqref{eq:monincprox} by showing sequential closedness.
	
	When $g \equiv 0$, we have, for all $j\in\N$, $\Phi_{k_j} \equiv f+h$ and the rest of the argument relies on sequential closedness of the graph of the subdifferential of $\Phi^*\circ(-A^*) \in \Gamma_0(\mc{H}_d)$ in the weak-strong topology. For the general case, our argument will rely on the fundamental concept of Mosco convergence of functions, which is epigraphical convergence for both the weak and strong topology (see~\cite{brezis72} and \cite[Definition~3.7]{attouch}). 
	
	By \propref{prop:moreau}\ref{moreauclaim4} and assumptions \ref{ass:A1}-\ref{ass:f}, $\subseq{\Phi_{k_j}}$ is an increasing sequence of functions in $\Gamma_0\para{\mc{H}_d}$. It follows from \cite[Theorem~3.20(i)]{attouch} that $\Phi_{k_j}$ Mosco-converges to $\sup_{j \in \N} \Phi_{k_j} = \sup_{j \in \N}f+g^{\beta_{k_j}} \circ T+h = f+g \circ T+h = \Phi$ since $\beta_{k_j} \to 0$ by \ref{ass:beta}. Bicontinuity of the Legendre-Fenchel conjugation for the Mosco convergence (see \cite[Theorem~3.18]{attouch}) entails that $\Phi_{k_j}^* \circ -(A^*)$ Mosco-converges to $(f+g \circ T+h)^*\circ(-A^*)=\Phi^*\circ(-A^*)$. This implies, via \cite[Theorem~3.66]{attouch}, that $\partial \Phi_{k_j}^* \circ (-A^*)$ graph-converges to $\partial \Phi^*\circ(-A^*)$, and \cite[Proposition~3.59]{attouch} shows that $\subseq{\partial \Phi_{k_j} \circ (-A^*)}$ is sequentially closed for graph-convergence in the weak-strong topology on $\mc{H}_d$, i.e., for any sequence $(v_{k_j},\eta_{k_j})$ in the graph of $\partial \Phi_{k_j}^*\circ(-A^*)$ such that $v_{k_j}$ converges weakly to $\bar{v}$ and $\eta_{k_j}$ converges strongly to $\bar{\eta}$, we have $\bar{\eta} \in \partial \Phi^*\circ(-A^*)(\bar{v})$. Taking $v_{k_j} = \nabla\varphi_{k_j}\para{\mu_{k_j}} - b$ and $\eta_{k_j} = \mu_{k_j} - \rho_{k_j}\nabla\varphi_{k_j}\para{\mu_{k_j}}$, we conclude that
	\newq{
	0 \in \p\para{\Phi^*\circ\para{-A^*}}\para{\bar{\mu}}+b ,
	}
	i.e., $\bar{\mu}$ is a solution of the dual problem~\eqref{DProb}.
	
	Recall $r_k$ from~\eqref{r} which verifies~\eqref{eq:mainenerseq}. From \lemref{Alber2}\ref{alber2rk}, $\seq{r_k}$ is convergent. By \ref{ass:P1} and \ref{ass:beta}, $\gamma_k$ and $\beta_k$ both converge to $0$. We also have that
	\newq{
	-\Lk{x_k,\mu_k} &= \para{\mc{L}(x_k,\mus) - \Lk{x_k,\mu_k}} - \mc{L}(x_k,\mus) \\
			&= g(T x_k) - g^{\beta_k}(T x_k) + \ip{\mus-\mu_k, Ax_k-b}{} - \frac{\rho_k}{2}\norm{Ax_k-b}{}^2 \\
			&\qquad - \mc{L}(x_k,\mus) .
	}
	We have from \thmref{thm:feas}(i) that $\frac{\rho_k}{2}\norm{Ax_k-b}{}^2 \to 0$. In turn, $\ip{\mus-\mu_k, Ax_k-b}{} \to 0$ since $\seq{\mu_k}$ is bounded (Lemma~\ref{bounded}). We also have $\mc{L}(x_k,\mus) \to \mc{L}(\xs,\mus)$ by claim (i) above. By \propref{prop:moreau}\ref{moreauclaim4} and \eqref{MMM}, we get that
	\newq{
	0 \leq \para{g(T x_k) - g^{\beta_k}(T x_k)} \leq \frac{\beta_{k}}{2} \gcon^2 .
	}
	Passing to the limit and in view of \ref{ass:beta}, we conclude that $g(T x_k) - g^{\beta_k}(T x_k) \to 0$. Altogether, this shows that $\Lk{x_k,\mu_k} \to \mc{L}(\xs,\mus)$. In turn, we conclude that the limit
	\[
	\lim_{k \to \infty}\norm{\mu_k-\mus}{}^2 = 2/c\para{\lim_{k \to \infty} r_k - \mc{L}(\xs,\mus)} 
	\]
	exists. Since $\mus$ was an arbitray optimal dual point, and we have shown above that each subsequence of $\seq{\mu_{k}}$ converges weakly to an optimal dual point, we are in position to invoke Opial's lemma \cite{opial1967weak} to conclude that the whole dual multiplier sequence weakly converges to a solution of the dual problem.

	\item Recalling that $\seq{\gamma_k}\not\in\ell^1_+$ (see assumption \ref{notell1}), the rates in \eqref{pointrateopt} follow by applying \lemref{Alber2}\ref{alber2wkrateinf}-\ref{alber2wkrate} to \eqref{eq:mainenerseq}. Notice that both terms in $w_k$ are positive and that $\rho_k\geq \rhoinf >0$ (see again assumption~\ref{ass:rhodec}). Therefore we have that, for the same subsequence $\subseq{x_{k_j}}$, \eqref{rate} holds.

	\item The ergodic rate \eqref{ergratefeas} follows by applying the Jensen's inequality to the convex function $\mc{L}\para{\cdot,\mus}$. 

	\end{enumerate}
	
\end{proof}

\section{Applications}\label{sec:app}

\subsection{Sum of several nonsmooth functions}\label{subsec:nnonsmooth}
In this section we explore the applications of \algref{alg:CGAL} to splitting in composite optimization problems, where we allow the presence of more than one nonsmooth function $g$ or $h$ in the objective:
\nnewq{\label{prod}
	\min\limits_{x\in \HH_p}\brac{f\para{x} + \sum\limits_{i=1}^n g_i\para{T_i x} + \sum\limits_{i=1}^n h_i\para{x}}.
}
First, we denote the product space by $\pspace \eqdef \mc{H}_p^n$ endowed with the scalar product $\bsip{\bs{x}}{\bs{y}}=\frac{1}{n}\sum_{i=1}^n\ip{\comp{x}{i},\comp{y}{i}}{}$, where $\bs{x}$ and $\bs{y}$ are vectors in $\pspace$ with $\bs{x} \eqdef  \pmat{\comp{x}{1}, \dots,\comp{x}{n}}^\top$. We define also $\V$ as the diagonal subspace of $\pspace$, i.e. $\V \eqdef  \small\{ \bs{x}\in\pspace: \comp{x}{1}=\ldots=\comp{x}{n} \small\}$, $\V^\perp$ the orthogonal subspace to $\V$, and $\Pi_{\V}, \PVo$ the orthogonal projections onto $\V$, $\V^\perp$ - respectively. We finally introduce the (diagonal) linear operator $\bs{T}: \pspace \to \pspace$ defined by 
\[
\left[\bs{T}\para{\bs{x}}\right]^{\left(i\right)}=T_i \comp{x}{i}
\]
and the functions
\newq{
F\para{\bs{x}} \eqdef  \frac{1}{n}\sum\limits_{i=1}^n f\para{x^{(i)}};\quad
G\para{\bs{T}\bs{x}} \eqdef  \sum\limits_{i=1}^n g_i\para{T_i x^{(i)}};\quad
H\para{\bs{x}} \eqdef  \sum\limits_{i=1}^n h_i\para{\comp{x}{i}}.
}
Then problem \eqref{prod} is obviously equivalent to
\nnewq{\label{prodequiv}
	\min\limits_{\bs{x}\in \pspace} \brac{F\para{\bs{x}} + G (\bs{T}\bs{x})+H\para{\bs{x}}: \ \ \PVo \bs{x} = 0},
}
which fits in the setting of our main problem \eqref{PProb}.
In order to make more clear the presentation, we separate the two cases of multiple $g$ and multiple $h$, that can be trivially combined. Moreover, we focus on the main case $h_i=\iota_{\C_i}$.

\subsection{Sum of several simple functions over a compact set}
Consider the following composite minimization problem,
\nnewq{\label{manyg}
\min\limits_{x\in \C}\brac{f\para{x} + \sum\limits_{i=1}^n g_i\para{T_i x}}.
}
We can reformulate the problem in the product space $\pspace$ using the above notation to get,
\newq{
\min\limits_{\bs{x}\in \C^n\cap \V} \brac{F\para{\bs{x}} + G\para{\bs{T}\bs{x}}}.
}
Applying \algref{alg:CGAL} to this problem gives a completely separable scheme; we first compute the direction,
\newq{
\bs{s}_k &\in \Argmin\limits_{\bs{s}\in \C^n\cap \V}\bsip{\nabla\para{F\para{\bs{x}_k} + G^{\beta_k}\para{\bs{T}\bs{x}_k}}}{\bs{s}},
}
which reduces to the following computation since $\bs{s}_k=\pmat{s_k\\\vdots\\ s_k}$ has identical components,
\newq{
s_k \in \Argmin\limits_{s\in \C}\ip{\sum\limits_{i=1}^n\para{\frac{1}{n}\nabla f\para{\comp{x_k}{i}} + \nabla g_i^{\beta_k}\para{T_i \comp{x_k}{i}}}, s}{}.
}
The term $\nabla g_i^{\beta_k}$ has a closed form given in \propref{prop:moreau} which can be used to get the following formula for the direction,
\newq{
s_k \in \Argmin\limits_{s\in \C}\ip{\sum\limits_{i=1}^n\para{\frac{1}{n}\nabla f\para{\comp{x_k}{i}} + \frac{1}{\beta_k}T_i^*\para{T_i \comp{x_k}{i}-\prox_{\beta g}\para{T_i\comp{x_k}{i}}}}, s}{}.
}

\subsection{Minimizing over intersection of compact sets}\label{app:manyh}
A classical problem found in machine learning is to minimize a Lipschitz-smooth function $f$ over the intersection of convex, compact sets $\C_i$ in some real Hilbert space $\mc{H}$,
\newq{
\min\limits_{x\in\bigcap\limits_{i=1}^n \C_i}f\para{x} = \min\limits_{x\in\mc{H}}\brac{f\para{x} + \sum\limits_{i=1}^n h_i\para{x}},
}
where $h_i\equiv \iota_{\C_i}$. Reformulating the problem in the product space $\pspace$ gives,
\newq{
\min\limits_{\sst{\bs{x}\in\pspace\\ \PVo\bs{x}=0}}\brac{ F\para{\bs{x}} + H\para{\bs{x}}}.
}
Then, we can apply \algref{alg:CGAL} and compute the step direction
\newq{
\bs{s}_k \in \Argmin\limits_{\bs{s}\in\C_1\times\ldots\times\C_n}\bsip{\bs{s}}{\nabla \sbrac{F\para{\bs{x}} + \ip{\bs{\mu}_k, \PVo \bs{x}_k} + \frac{\rho_k}{2}\norm{\PVo\bs{x}_k}{}^2}}
}
which gives a separable scheme for each component of $\bs{s}_k = \pmat{\comp{s_k}{1}\\\vdots\\\comp{s_k}{n}}$,
\nnewq{\label{eq:cgaldir}
\comp{s_k}{i} &\in \Argmin\limits_{s\in\C_i}\ip{s, \frac{1}{n}\nabla f\para{\comp{x_k}{i}} + \pcomp{\PVo\bs{\mu}_k}{i}+\rho_k\pcomp{\PVo \bs{x}_k}{i}}{}\\
&=\Argmin\limits_{s\in\C_i} \ip{s, \frac{1}{n}\nabla f\para{\comp{x_k}{i}} +\comp{\mu_k}{i}-\frac{1}{n}\sum\limits_{j=1}^n\comp{\mu_k}{j} +\rho_k\para{\comp{x_k}{i}-\frac{1}{n}\sum\limits_{j=1}^n\comp{x_k}{j}}}{}.
}

\begin{section}{Comparison}\label{sec:comparison}
\begin{subsection}{Conditional Gradient Framework}
In \cite{Yurt} the following problem was analyzed in the finite-dimensional setting,
\nnewq{\label{eq:yurtprob}
\min\limits_{x\in\C} \brac{f\para{x} + g\para{Tx}}
}
where $f\in C^{1,1}\para{\R^n}\cap\Gamma_0\para{\R^n}$, $T\in\R^{d\times n}$ is a linear operator, $g\circ T\in\Gamma_0\para{\R^n}$, and $\C$ is a compact, convex subset of $\R^n$. They develop an algorithm which avoids projecting onto the set $\C$, instead utilizing a linear minimization oracle $\lmo_{\C}\para{v} = \Argmin\limits_{x\in\C}\ip{x,v}{}$, and replaces the function $g\circ T$ with the smooth function $g^\beta_k\circ T$. They consider only functions $f$ which are Lipschitz-smooth and finite dimensional spaces, i.e. $\R^n$, compared to \cgal which weakens the assumptions on $f$ to be differentiable and $\para{F,\zeta}$-smooth (see \defref{def_smooth}) with an arbitrary real Hilbert space $\mc{H}_p$ (possibly infinite dimensional). Furthermore, the analysis in \cite{Yurt} is restricted to the parameter choices $\gamma_k = \frac{2}{k+1}$ and $\beta_k=\frac{\beta_0}{\sqrt{k+1}}$ exclusively, although they do include a section in which they consider two variants of an inexact linear minimization oracle: one with additive noise and one with multiplicative noise. In contrast, the results we present in \secref{sec:alg} show optimality and feasibility for a wider choice for both the sequence of stepsizes $\seq{\gamma_k}$ and the sequence of smoothing parameters $\seq{\beta_k}$, although we only consider exact linear perturbation oracles of the form $\Argmin\limits_{s\in\HH_p}\brac{h\para{s}+\ip{x,s}{}}$. Finally, for solving \eqref{eq:yurtprob} with an exact linear minimization oracle, our algorithm encompasses the algorithm in \cite{Yurt} by choosing $h\para{x} = \iota_\C\para{x}$, $A\equiv 0$, and restricting $f$ to be in $C^{1,1}\para{\mc{H}}$ with $\mc{H} = \R^n$.

In \cite[Section 5]{Yurt} there is a discussion on splitting and affine constraints using the conditonal gradient framework presented. In this setting, i.e. assuming exact oracles, the primary difference between \cgal and the conditional gradient framework is the approach each algorithm takes to handle affine constraints. In \cgal, the augmented Lagrangian formulation is used to account for the affine constraints, introducing a dual variable $\mu$ and both a linear and quadratic term for the constraint $Ax-b=0$. In contrast, in \cite{Yurt} the affine constraint is treated the same as the nonsmooth term $g \circ T$ and thus handled by quadratic penalization/smoothing alone. The consequence of smoothing for the affine constraint $Ax=b$ comes from calculating the gradient of the squared-distance to the constraint. This will involve solving a least squares problem at each iteration which can be computationally expensive. Our algorithm does not need to solve such a linear system. 

The difference in the approaches is highlighted when both methods are applied to problem presented in \secref{app:manyh} with $n=2$ since this problem necessitates an affine constraint $\PVo\bs{x}=0$ for splitting. According to \cite[Section~5]{Yurt}, we reformulate the problem to be
\newq{
\min\limits_{\sst{\comp{x}{1}\in\C_1\\\comp{x}{2}\in C_2}}\brac{\frac{1}{2}\para{f\para{\comp{x}{1}}+f\para{\comp{x}{2}}} + \iota_{\brac{\comp{x}{1}}}\para{\comp{x}{2}}}.
}
Note that the inclusion of the function $\iota_{\brac{\comp{x}{1}}}\para{\comp{x}{2}}$ in the objective is equivalent to the affine constraint $\PVo \bs{x}=0$ in the $n=2$ case. Apply the conditional gradient framework on the variable $\para{\comp{x}{1},\comp{x}{2}}$ to get
\newq{
\bs{s}_k &\in \Argmin\limits_{\sst{\comp{s}{1}\in \C_1\\ \comp{s}{2}\in \C_2}}\brac{\ip{\pmat{\comp{s}{1}\\ \comp{s}{2}},\pmat{\nabla_{\comp{x}{1}} \sbrac{\frac{1}{2}f\para{\comp{x_k}{1}} + \iota_{\comp{x_k}{2}}^{\beta_k}\para{\comp{x_k}{1}}}\\ \nabla_{\comp{x}{2}}\sbrac{\frac{1}{2}f\para{\comp{x_k}{2}}+\iota_{\comp{x_k}{1}}^{\beta_k}\para{\comp{x_k}{2}}}}}{}},
}
which leads to a separable scheme that can be computed component-wise,
\nnewq{\label{eq:yurtdir}
\comp{s_k}{1} &\in \Argmin\limits_{s\in\C_1} \ip{s, \frac{1}{2}\nabla f\para{\comp{x_k}{1}} + \frac{\comp{x_k}{1}-\comp{x_k}{2}}{\beta_k}}{}\\
\comp{s_k}{2} &\in \Argmin\limits_{s\in\C_2} \ip{s, \frac{1}{2}\nabla f\para{\comp{x_k}{2}} + \frac{\comp{x_k}{2}-\comp{x_k}{1}}{\beta_k}}{}.
}
Compare the direction obtained in \eqref{eq:yurtdir} to the one obtained in \eqref{eq:cgaldir}, the components of which we rewrite below for $n=2$,
\nnewq{\label{eq:cgaldir2}
\comp{s_k}{1} &\in \Argmin\limits_{s\in \C_1}\ip{s, \frac{1}{2} \nabla f\para{\comp{x_k}{1}} + \frac{1}{2}\para{\comp{\mu_k}{1}-\comp{\mu_k}{2}} + \frac{\rho_k}{2}\para{\comp{x_k}{1} - \comp{x_k}{2}}}{}\\
\comp{s_k}{2} &\in \Argmin\limits_{s\in \C_2}\ip{s, \frac{1}{2} \nabla f\para{\comp{x_k}{2}} + \frac{1}{2}\para{\comp{\mu_k}{2}-\comp{\mu_k}{1}} + \frac{\rho_k}{2}\para{\comp{x_k}{2} - \comp{x_k}{1}}}{}.
}
Due to affine constraint, the computation of the direction in \eqref{eq:yurtdir} necessitates smoothing and, as a consequence, the parameter $\beta_k$, which is necessarily going to $0$. In \cgal, the introduction of the dual variable $\bs{\mu}_k$ in place of smoothing the affine constraint avoids the parameter $\beta_k$. Instead, we have the parameter $\rho_k$ but $\rho_k$ can be picked to be constant without issue.

\end{subsection}

\begin{subsection}{FW-AL Algorithm}
In \cite{Gidel} the following problem was analyzed,
\newq{
\min\limits_{\sst{x\in \bigcap\limits_{i=1}^n \C_i\\Ax=0}} f\para{x}
}
using a combination of the Frank-Wolfe algorithm with the augmented Lagrangian to account for the constraint $Ax=0$. The function $f$ is assumed to be Lipschitz-smooth, in contrast to our approach. The perspective used in their paper is to modify the classic ADMM algorithm, replacing the marginal minimization with respect to the primal variable by a Frank-Wolfe step instead, although their analysis is not restricted only to Frank-Wolfe steps. Indeed, in all the scenarios where one can apply FW-AL using a Frank-Wolfe step our algorithm encompasses FW-AL as a special case, discussed in \secref{app:manyh}. The primary differences between \cgal and FW-AL are in the convergence results and the generality of \cgal. The results in \cite{Gidel} prove convergence of the objective in the case where the sets $\C_i$ are polytopes and convergence of the iterates in the case where the sets $\C_i$ are polytopes and $f$ is strongly convex, but they do not prove convergence of the objective, weak convergence of the dual variable, or asymptotic feasibility of the iterates in the general case where each $\C_i$ is a compact, convex set. Instead, they prove two theorems which imply subsequential convergence of the objective and subsequential asymptotic feasibility in the general case and subsequential convergence of the iterates to the optimum in the strongly convex case in \cite[Theorem 2]{Gidel} and \cite[Corollary 2]{Gidel} respectively. Unfortunately, each of these results is obtained separately and so the subsequences that produce each result are not guaranteed to coincide with one another.

Interestingly, the results they obtain are not unique to Frank-Wolfe style algorithms as their analysis is from the perspective of a modified ADMM algorithm; they only require that the algorithm used to replace the marginal minimization on the primal variable in ADMM produces sublinear decrease in the objective. Finally, they do not provide conditions for the dual multiplier sequence, $\mu_k$ in our notation, to be bounded as they discuss in their analysis of issues with similar proofs, e.g. in GDMM. This is a crucial issue as the constants in their bounds depend on the norm of these dual multipliers.
\end{subsection}

\end{section}

\section{Numerical Experiments}\label{sec:num}

In this section we present some numerical experiments comparing the performance of \algref{alg:CGAL} and a proximal algorithm applied to splitting in composite optimization problems. 

\subsection{Projection problem} 
First, we consider a simple projection problem,
\nnewq{\label{eq:projprob}
\min\limits_{x\in\R^2}\enscondlr{\frac{1}{2}\norm{x-y}{2}^2}{\norm{x}{1}\leq 1, Ax=0},
}
where $y\in\R^2$ is the vector to be projected and $A:\R^2\to\R^2$ is a rank-one matrix. To exclude trivial projections, we choose randomly $y \notin \Ball_1^1 \cap \ker(A)$, where $\Ball_1^1$ is the unit $\ell^1$ ball centered at the origin. Then Problem~\eqref{eq:projprob} is nothing but Problem~\eqref{PProb} with $f\para{x} = \frac{1}{2}\norm{x-y}{2}^2$, $g\equiv 0$, $h \equiv \iota_{\Ball_1^1}$ and $\C = \Ball_1^1$.

\begin{figure}[htbp]
\centering
\includegraphics[width=0.75\linewidth]{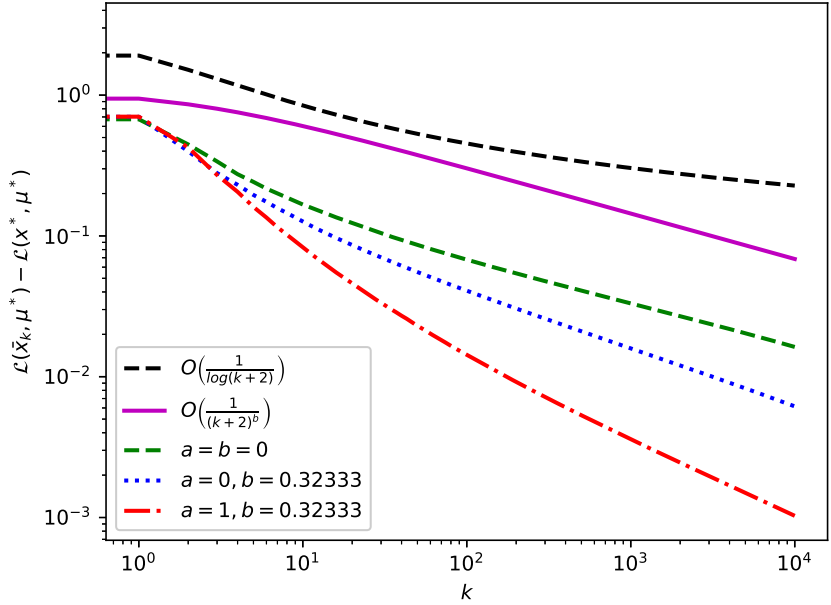}
\caption{Ergodic convergence profiles for \cgal applied to the simple projection problem.}
\label{fig:simple_convergence}
\end{figure}

The assumptions mentioned previously, i.e. \ref{ass:A1}-\ref{ass:coercive}, all hold in this finite-dimensional case as $f$, $g$, and $h$ are all in $\Gamma_0\para{\R^2}$, $f$ is Lipschitz-smooth, $h$ is the indicator function for a compact convex set, $g$ has full domain and $0 \in \ker(A) \cap \interop(\C)$. Regarding the parameters and the associated assumptions, we choose $\gamma_k$ according to Example~\ref{ex:seq} with $(a,b) \in \ens{(0,0),(0,1/3-0.01),(1,1/3-0.01)}$, $\theta_k = \gamma_k$, and $\rho = 2^{2-b}+1$. The ergodic convergence profiles of the Lagrangian are displayed in Figure~\ref{fig:simple_convergence} along with the theoretical rates (see \thmref{convergence} and Example~\ref{ex:seqconv}). The observed rates agree with the predicted ones of $O\para{\frac{1}{\log(k+2)}}$, $O\para{\frac{1}{(k+2)^{b}}}$ and $o\para{\frac{1}{(k+2)^{b}}}$ for the respective choices of $(a,b)$.

\subsection{Matrix completion problem} 
We also consider the following, more complicated matrix completion problem,
\nnewq{\label{eq:matcomp}
\min\limits_{X\in\R^{N \times N}}\enscond{\norm{\Omega X - y}{1}}{\norm{X}{*}\leq \delta_1, \norm{X}{1}\leq\delta_2},
}
where $\delta_1$ and $\delta_2$ are positive constants, $\Omega: \R^{N \times N}\to \R^p$ is a masking operator, $y\in\R^p$ is a vector of observations, and $\norm{\cdot}{*}$ and $\norm{\cdot}{1}$ are respectively the nuclear and $\ell^1$ norms. The mask operator $\Omega$ is generated randomly by specifying a sampling density, in our case $0.8$. We generate the vector $y$ randomly in the following way. We first generate a sparse vector $\tilde y \in \R^N$ with $N/5$ non-zero entries independently uniformly distributed in $[-1,1]$. We take the exterior product $\tilde y\tilde y^\top=X_0$ to get a rank-1 sparse matrix which we then mask to get $\Omega X_0$. The radii of the contraints in \eqref{eq:matcomp} are chosen according to the nuclear norm and $\ell^1$ norm of $X_0$, $\delta_1 = \frac{\norm{X_0}{*}}{2}$ and $\delta_2 = \frac{\norm{X_0}{1}}{2}$.

\subsubsection{\cgal}
Problem~\eqref{eq:matcomp} is a special instance of~\eqref{prod} with $n=2$, $f \equiv 0$, $g_i=\norm{\cdot-y}{1}/2$, $T_i=\Omega$, $h_1 = \iota_{\Ball_*^{\delta_1}}$, $h_2 = \iota_{\Ball_1^{\delta_2}}$, where $\Ball_*^{\delta_1}$ and $\Ball_1^{\delta_2}$ are the nuclear and $\ell^1$ balls of radii $\delta_1$ and $\delta_2$. We then follow the same steps as in \secref{subsec:nnonsmooth}. Let $\mc{H}_p = \R^{N \times N}$, $\pspace = \mc{H}_p^2$, $\bs{X} = \bp \comp{X}{1}\\\comp{X}{2}\ep\in\pspace$. We then have $G\para{\Omega\bs{X}} = \frac{1}{2}\para{\norm{\Omega\comp{X}{1}-y}{1}+\norm{\Omega\comp{X}{2}-y}{1}}$, and $H(\bs{X}) = \iota_{\Ball_*^{\delta_1}}(\comp{X}{1})+\iota_{\Ball_1^{\delta_2}}(\comp{X}{2})$. Then problem \eqref{eq:matcomp} is obviously equivalent to 
\nnewq{
\min\limits_{\bs{X}\in\pspace}\enscond{G\para{\Omega\bs{X}}+H(\bs{X})}{\PVo\bs{X}=0} ,
}
which is a special case of \eqref{prodequiv} with $F \equiv 0$. It is immediate to check that our assumptions \ref{ass:A1}-\ref{ass:coercive} hold. Indeed, all functions are in $\Gamma_0(\mc{H}_p)$ and $F \equiv 0$, and thus \ref{ass:A1} and \ref{ass:f} are verified. $\C = \Ball_*^{\delta_1} \times \Ball_1^{\delta_2}$ which is a non-mepty convex compact set. We also have $\Omega\C \subset \dom(\partial G)=\R^p \times \R^p$, and for any $\bs{z} \in \R^p \times \R^p$, $\partial G(\bs{z}) \subset \Ball_\infty^{1/2} \times \Ball_\infty^{1/2}$ and thus \ref{ass:interior} is verified. \ref{ass:lip} also holds with $L_h=0$. $\V$ is closed as we are in finite dimension, and thus \ref{ass:rangeA} is fulfilled. We also have, since $\dom(G\circ\Omega)=\pspace$,
\[
\bs{0} \in \V \cap \inte\para{\dom(G\circ\Omega)} \cap \inte\para{\C} = \V \cap \inte\pa{\Ball_*^{\delta_1}} \times \inte\pa{\Ball_1^{\delta_2}} ,
\]
which shows that \ref{ass:coercive} is verified. The latter is nothing but the condition in \cite[Fact~15.25(i)]{BAUSCHCOMB}. It then follows from the discussion in Remark~\ref{rem:assfun}\ref{rem:assfunproper} that \ref{ass:existence} holds true.

We use \algref{alg:CGAL} by choosing the sequence of parameters $\gamma_k = \frac{1}{k+1}$, $\beta_k = \frac{1}{\sqrt{k+1}}$, $\theta_k = \gamma_k$, and $\rho_k \equiv 15$, which verify all our assumptions \ref{ass:P1}-\ref{ass:cond} in view of Example~\ref{ex:seq}. Our choice of $\gamma_k$ is the most common in the literature, and it can be improved according to our discussion in the previous section. 

Finding the direction $\bs{S}_k$ by solving the linear minimization oracle is a separable problem, and thus each component is given by,
{\small
\nnewq{
\comp{S_k}{1} \in \Argmin\limits_{\comp{S}{1}\in \Ball_{\norm{\cdot}{*}}^{\delta_1}}
&\Bigg\langle\frac{\Omega^*\para{\Omega \comp{X_k}{1} - y - \prox_{\frac{\beta_k}{2}\norm{\cdot}{1}}\para{\Omega\comp{X_k}{1} - y}}}{\beta_k} \\
&\quad + \frac{1}{2}\para{\comp{\mu_k}{1}-\comp{\mu_k}{2} +\rho_k\para{\comp{X_k}{1}-\comp{X_k}{2}}}, \comp{S}{1}\Bigg\rangle,\\
\comp{S_k}{2} \in \Argmin\limits_{\comp{S}{2}\in \Ball_{\norm{\cdot}{1}}^{\delta_2}}
&\Bigg\langle\frac{\Omega^*\para{\Omega \comp{X_k}{2} - y - \prox_{\frac{\beta_k}{2}\norm{\cdot}{1}}\para{\Omega\comp{X_k}{2} - y}}}{\beta_k} \\
&\quad +\frac{1}{2}\para{\comp{\mu_k}{2}-\comp{\mu_k}{1} +\rho_k\para{\comp{X_k}{2}-\comp{X_k}{1}}}, \comp{S}{2}\Bigg\rangle .
}
}
Because of the structure of the sets $\Ball_{\norm{\cdot}{*}}^{\delta_1}$ and $\Ball_{\norm{\cdot}{1}}^{\delta_2}$, finding the first component of $\bs{S}_k$ reduces to computing the leading right and left singular vectors of 
\newq{
\frac{\Omega^*\para{\Omega \comp{X_k}{1} - y - \prox_{\frac{\beta_k}{2}\norm{\cdot}{1}}\para{\Omega\comp{X_k}{1} - y}}}{\beta_k} +\frac{1}{2}\para{\comp{\mu_k}{1}-\comp{\mu_k}{2} +\rho_k\para{\comp{X_k}{1}-\comp{X_k}{2}}}
}
while finding the second component reduces to computing the largest entry of 
{\small
\newq{
\left\lvert\para{\frac{\Omega^*\para{\Omega \comp{X_k}{2} - y - \prox_{\frac{\beta_k}{2}\norm{\cdot}{1}}\para{\Omega\comp{X_k}{2} - y}}}{\beta_k}+\frac{1}{2}\para{\comp{\mu_k}{2}-\comp{\mu_k}{1} +\rho_k\para{\comp{X_k}{2}-\comp{X_k}{1}}}}_{\para{i,j}}\right\rvert
}
}
over all the entries $\para{i,j}$. The dual variable update is given by,
\newq{
\bs{\mu}_{k+1} \eqdef \bp \comp{\mu_{k+1}}{1}\\ \comp{\mu_{k+1}}{2}\ep = \bp \comp{\mu_{k}}{1}\\ \comp{\mu_{k}}{2}\ep + \frac{\gamma_k}{2}\bp\comp{X_{k+1}}{1} - \comp{X_{k+1}}{2}\\ \comp{X_{k+1}}{2}-\comp{X_{k+1}}{1}\ep
}

\subsubsection{GFB}
Let $\mc{H}_p = \R^{N \times N}$, $\pspace = \mc{H}_p^3$, $\bs{W} = \bp \comp{W}{1}\\\comp{W}{2}\\\comp{W}{3} \ep \in \pspace$, $Q\para{\bs{W}} = \norm{\Omega\comp{W}{1}-y}{1} + \iota_{\Ball_{\norm{\cdot}{*}}^{\delta_1}}\para{\comp{W}{2}} + \iota_{\Ball_{\norm{\cdot}{1}}^{\delta_2}}\para{\comp{W}{3}}$. Then we reformulate problem \eqref{eq:matcomp} as
\nnewq{
\min\limits_{\bs{W}\in\pspace}\enscond{Q\para{\bs{W}}}{\bs{W}\in \V} ,
}
which fits the framework to apply the GFB algorithm proposed in~\cite{gfb2011} (in fact Douglas-Rachford since the smooth part vanishes).

The algorithm has three steps, each of which is separable in the components. We choose the step sizes $\lambda_k = \gamma = 1$ in the GFB to get,
{\small
\nnewq{
\begin{cases}
\bs{U}_{k+1}=\bp
2\comp{W_{k}}{1} - \comp{Z_{k}}{1} + 
\Omega^*\para{y-\Omega\para{2\comp{W_{k}}{1} - \comp{Z_{k}}{1}} +\prox_{\norm{\cdot}{1}}\para{\Omega\para{2\comp{W_{k}}{1} - \comp{Z_{k}}{1}}-y}}\\
\Pi_{\Ball_{\norm{\cdot}{*}}^{\delta_1}}\para{2\comp{W_{k}}{2} - \comp{Z_{k}}{2}}\\
\Pi_{\Ball_{\norm{\cdot}{1}}^{\delta_2}}\para{2\comp{W_{k}}{3} - \comp{Z_{k}}{3}}
\ep\\
\bs{Z}_{k+1}=\bs{Z}_{k} + \bs{U}_{k+1}-\bs{W}_{k}\\
\bs{W}_{k+1}=\bp
\sum_{i=1}^3\comp{Z_{k+1}}{i}/3\\
\sum_{i=1}^3\comp{Z_{k+1}}{i}/3\\
\sum_{i=1}^3\comp{Z_{k+1}}{i}/3
\ep
\end{cases}
}
}

We know from \cite{gfb2011} that $\bs{Z}_k$ converges to $\bs{Z}^\star$, and $\bs{W}_k$ and $\bs{U}_k$ both converge to $\bs{W}^\star=\Pi_{\V}(\bs{Z}^\star) = (X^\star,X^\star,X^\star)$, where $X^\star$ is a minimizer of \eqref{eq:matcomp}.

\subsubsection{Results}
We compare the performance of \cgal with GFB for varying dimension, $N$, using their respective ergodic convergence criteria. For \cgal this is the quantity $\mc{L}\para{\bar{\bs{X}}_k,\mu^*}-\mc{L}\para{\bs{X}^\star,\bs{\mu}^\star}$ where $\bar{\bs{X}}_k = \sum\limits_{i=0}^k \gamma_i \bs{X}_i/\Gamma_k$. Meanwhile, for GFB, we know from \cite{cesarefdr} that the Bregman divergence $D_Q^{\bs{v}^\star}\para{\bar{\bs{U}}_k}=Q(\bar{\bs{U}}_k)-Q(\bs{W}^\star)-\ip{\bs{v}^\star,\bar{\bs{U}}_k-\bs{W}^\star}{}$, with $\bar{\bs{U}}_k = \sum\limits_{i=0}^k\bs{U}_i/(k+1)$ and $\bs{v}^\star = (\bs{W}^\star - \bs{Z}^\star)/\gamma$, converges at the rate $O(1/(k+1))$. To compute the convergence criteria, we first run each algorithm for $10^5$ iterations to approximate the optimal variables ($\bs{X}^\star$ and $\bs{\mu}^\star$ for \cgal, and $\bs{Z}^\star$ and $\bs{W}^\star$ for GFB). Then, we run each algorithm again for $10^5$ iterations, this time recording the convergence criteria at each iteration. The results are displayed in Figure~\ref{fig:convergence}.

\begin{figure}[htbp]
\includegraphics[width=0.5\linewidth]{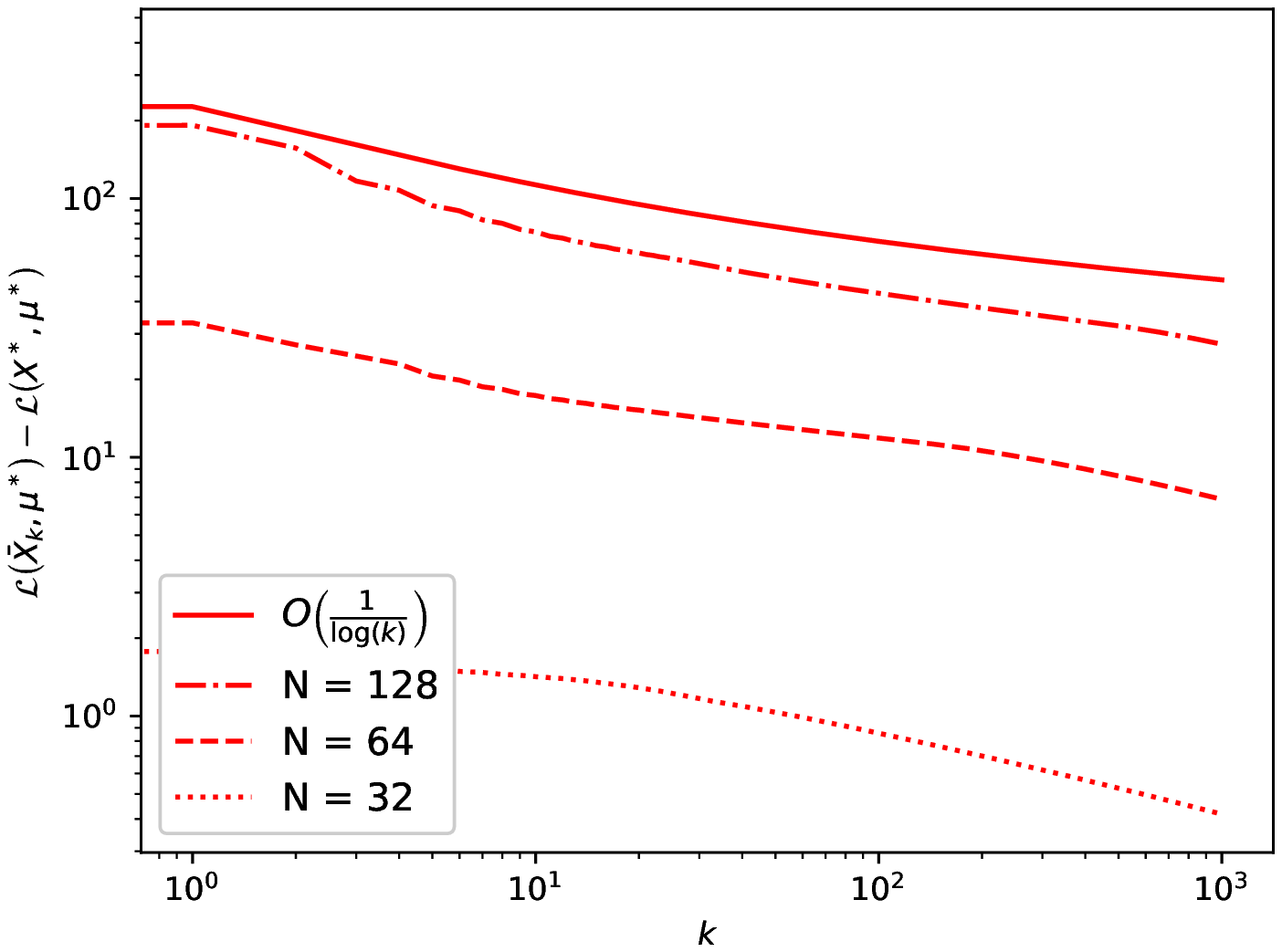}
\includegraphics[width=0.5\linewidth]{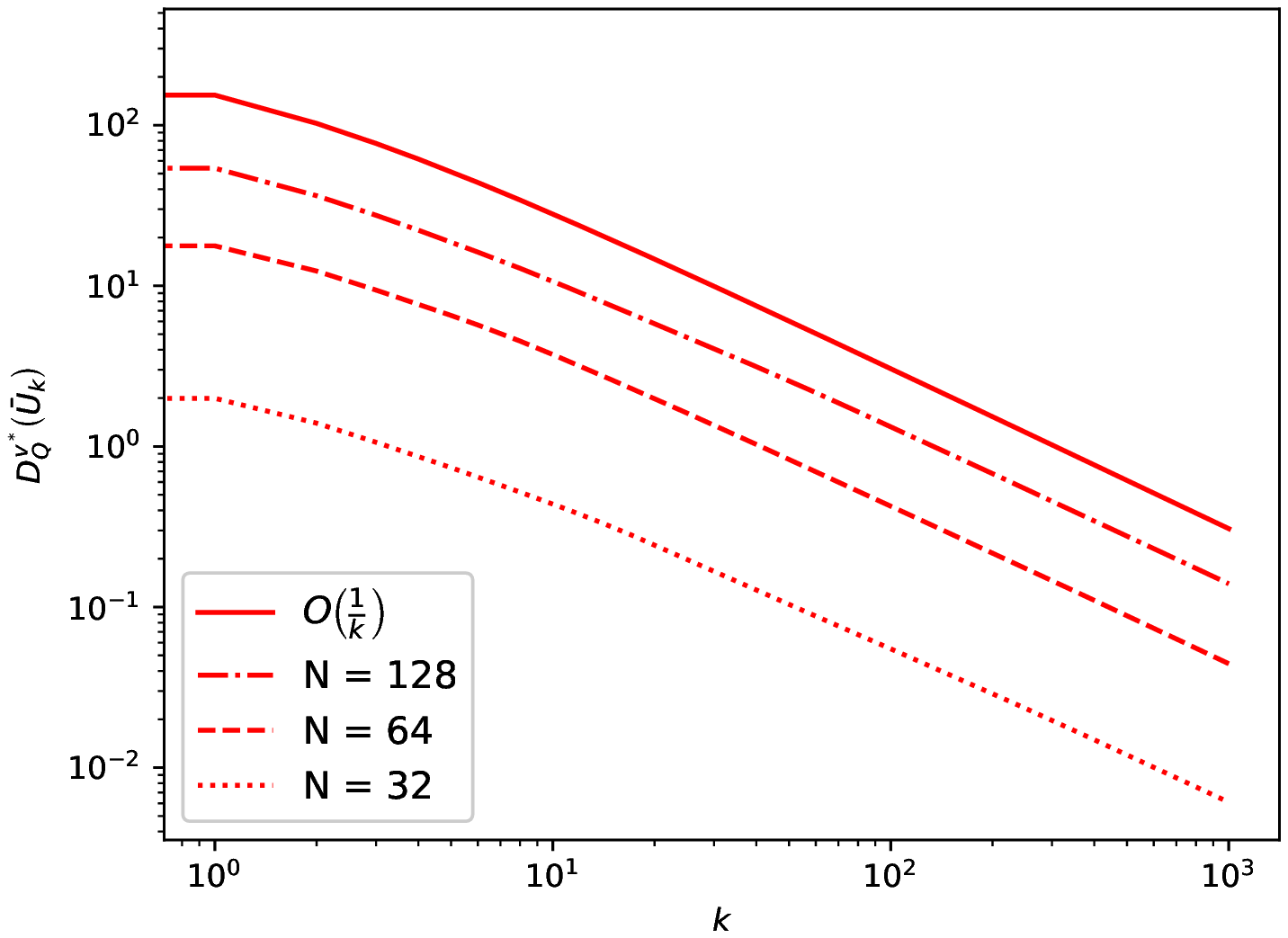}
\caption{Convergence profiles for \cgal (left) and GFB (right) for $N=32$, $N=64$, and $N=128$.}
\label{fig:convergence}
\end{figure}

It can be observed that our theoretically predicted rate (which is $O\para{1/\log(k+2)}$ for \cgal according to \thmref{convergence} and Example~\ref{ex:seqconv}) is in close agreement with the observed one. On the other hand, as is very well-known, employing a proximal step for the nuclear ball constraint will necessitate to compute an SVD which is much more time consuming than computing the linear minimization oracle for large $N$. For this reason, even though the rates of convergence guaranteed for \cgal are slower than for GFB, one can expect \cgal to be a more time computationally efficient algorithm for large $N$.


\section*{Acknowledgements}
ASF was supported by the ERC Consolidated grant NORIA. JF was partly supported by Institut Universitaire de France. CM was supported by Project MONOMADS funded by Conseil R\'egional de Normandie. We would like to warmly thank Gabriel Peyr{\'e} for his support and very fruitful and inspiring discussions.

\appendix
\small


\begin{small}
\bibliographystyle{plain}
\bibliography{ratesbib}
\end{small}

\end{document}